\Crefname{figure}{Figure}{Figure}
\newcommand{\Hthree}{\mathbb{H}^3}
\newcommand{\Pthree}{\mathcal{P}}
\newcommand{\Ttwo}{\mathcal{T}}
\newcommand{\ZZ}{\mathbb{Z}}
\newcommand{\ZZtwelve}{\ZZ[\zeta_{12}]}
\newcommand{\mI}{m_\infty}
\newcommand{\mZero}{m_0}
\theoremstyle{plain}
\newtheorem{counter}{}[section]
\newtheorem{theorem}[counter]{Theorem}
\newtheorem{fact}[counter]{Fact}
\newtheorem{prop}[counter]{Proposition}
\newtheorem{lemma}[counter]{Lemma}
\newtheorem{corollary}[counter]{Corollary} 
\newtheorem{question}[counter]{Question} 
\theoremstyle{definition}
\newtheorem{definition}[counter]{Definition} 
\newtheorem{example}[counter]{Example} 
\newtheorem{claim}{Claim}[counter]
\newtheorem{remark}[counter]{Remark}
\newtheorem*{notation}{Notation}
\newcommand\aceofbase{\ensuremath{\mathcal{R}}}
\newcommand\totaleclipse{\ensuremath{\mathcal{S}}}
\title{Mixed-platonic $3$-manifolds}
\author[E. Chesebro]{Eric Chesebro}
\address{Department of Mathematical Sciences\\
University of Montana\\
32 Campus Drive \\
Missoula, MT 59812}
\email{eric.chesebro@umt.edu}
\author[M. Chu]{Michelle Chu}
\address{School of Mathematics\\ University of Minnesota \\ 
127 Vincent Hall 206 Church St. SE\\ Minneapolis, MN 55455}
\email{mchu@umn.edu}
\author[J. DeBlois]{Jason DeBlois}
\address{Department of Mathematics\\
University of Pittsburgh\\
301 Thackeray Hall\\
Pittsburgh, PA 15260}
\email{jdeblois@pitt.edu}
\author[N. Hoffman]{Neil R Hoffman}
\address{Department of Mathematics and Statistics \\
University of Minnesota Duluth\\
140 Engineering Building\\
1303 Ordean Ct\\
Duluth, MN 55812}
\email{neilhoff@d.umn.edu}
\author[P. Mondal]{Priyadip Mondal}
\address{Department of Mathematics\\
Rutgers University-New Brunswick\\
Hill Center for the Mathematical Sciences\\ 
110 Frelinghuysen Road\\
Piscataway, NJ 08854 }
\email{pm868@scarletmail.rutgers.edu}
\author[G. Walsh]{Genevieve S Walsh}
\address{Department of Math\\Tufts University\\177 College Ave\\
Medford, MA 02155}
\email{genevieve.walsh@tufts.edu}
\begin{document}

\begin{abstract}
    We introduce a class of cusped hyperbolic $3$-manifolds that we call mixed-platonic, composed of regular ideal hyperbolic polyhedra of more than one type, which includes certain previously-known examples. We establish basic facts about mixed-platonic manifolds which allow us to conclude, among other things, that there is no mixed-platonic hyperbolic knot complement with hidden symmetries.
\end{abstract}

\maketitle

\section{Introduction}

Three classical and beautiful hyperbolic knot complements in $S^3$ 
are \textit{platonic}, meaning they admit a complete hyperbolic structure that decomposes as a union of isometric copies of a single regular ideal polyhedron.  Thurston famously showed that the figure-eight knot complement decomposes into two regular ideal tetrahedra \cite[Ch.~3]{Th_notes}, cf.~\cite{Thurstonbook}; and Aitchison-Rubinstein discovered the two ``dodecahedral knots'', each of whose complement decomposes into two regular ideal dodecahedra \cite{AitchRubdodec}. There are many more platonic hyperbolic manifolds---a census is given in \cite{Matthiascensus}---including the complements of the Whitehead link and Borromean rings which were also described in \cite[Ch.~3]{Th_notes}. However, Hoffman showed that the complements of the figure-eight and the two known dodecahedral knots are the only platonic hyperbolic knot complements in $S^3$ \cite[Corollary 2]{NeilExperiment}, building on work of Reid who proved that the only arithmetic hyperbolic knot complement is that of the figure-eight \cite{reidarith}.

The three platonic knot complements are also known to have {\it hidden symmetries}, meaning that each has an isometry between finite-degree covers that does not descend to the knot complement itself.  These are the only hyperbolic knot complements in $S^3$ known to have hidden symmetries, and in 1995, Neumann-Reid conjectured that they are the only three \cite[Problem 3.64(a)]{KirbyList}. The conjecture remains open, although many knot complements have been shown not to have hidden symmetries.   

Since platonic knots are exactly the knots which are known to admit hidden symmetries, it is natural to investigate the knot complements which are made of at least two types of platonic solids. In this paper we name and initiate a systematic study of \textit{mixed-platonic} manifolds, complete hyperbolic $3$-manifolds that decompose into regular ideal polyhedra of more than one type. Section \ref{tinderbox} below gives the formal definition and exhibits some examples, including the knot complement $S^3 - 12n706$, that have been known to experts for some time. A certain structural feature of the cusp of $S^3-12n706$ might suggest that mixed-platonic knot complements are a promising class to search for more examples with hidden symmetries. However, the main result of this paper shows that this is not so.

\newcommand\NoMPKHidSym{No mixed-platonic knot complement in $S^3$ has hidden symmetries.}
\theoremstyle{plain}
\newtheorem*{NoMPKHS}{\Cref{main_thm}}
\begin{NoMPKHS}\NoMPKHidSym\end{NoMPKHS}

The proof of this result, developed over the course of Sections \ref{basic} through \ref{eclipse}, entails a rich blend of arithmetic, combinatorics, and geometry. Section \ref{basic} establishes basic results about mixed-platonic orbifolds, notably the existence of a standard horoball packing (\Cref{packing}) and tight control over the translation length of peripheral elements representing meridians of a mixed-platonic knot complement (\Cref{neilolarry}, proved number-theoretically). Section \ref{perti} focuses on \textit{peripheral tilings} of horoball boundaries induced by the polyhedra comprising the decomposition of a mixed-platonic manifold. Every peripheral tiling is a tiling of the plane by equilateral triangles and squares having lattice symmetry. Section \ref{perti} gives criteria showing that not every such tiling can be the peripheral tiling of a mixed-platonic orbifold.

The main result of Section \ref{Pgh Regional Transit}, \Cref{And then there were two} classifies the tilings of the plane by equilateral triangles and squares with sidelength $1$, having a short translational symmetry and also order-three rotational symmetry. Peripheral tilings of mixed-platonic knot complements with hidden symmetries would have these properties, as we now discuss. The translation length bound follows from \Cref{neilolarry} here, and the existence of rotations follows from previous work: by \cite[Prop.~9.1]{NeumannReidArith}, every hyperbolic knot complement $M$ with hidden symmetries covers a rigid-cusped orbifold $O$; if $M$ is mixed-platonic then by \Cref{commensurator} here, $O$ is as well, with the same peripheral tiling as $M$. Theorem 1.1 of \cite{NeilOrbiCusps} further implies in this case that the orbifold fundamental group of the cusp of $O$ contains order-three rotations. 

\Cref{And then there were two} describes three tilings, two of which were showed in Section \ref{perti} not to be the peripheral tiling of any one-cusped mixed-platonic manifold. The main result of Section \ref{eclipse}, \Cref{slop} asserts that the full symmetry group of the third tiling---a $(2,3,6)$-rotation group---is not the peripheral subgroup of a mixed-platonic orbifold, and neither is its index-two $(3,3,3)$-rotation subgroup. The proof of \Cref{main_thm}, which uses the prior results listed above, concludes this section.

While we have showed that the collection of mixed-platonic manifolds does not contain a knot complement with hidden symmetries, our work exhibits numerous nice properties of its members, and it has prompted many further questions.  \Cref{sec:questions} lists some of these.

\section*{Acknowledgements}

This project originated at a SQuaRE which concluded in March 2024 at the American Institute of Mathematics (AIM) in Pasadena, CA.  
The authors thank AIM for providing a supportive and mathematically rich environment. 

The second author was partially supported by NSF grant DMS-2243188 and the sixth author by NSF grant DMS-2005353.

\section{Definition and existing examples}\label{tinderbox}

\begin{definition}\label{mtothep} A connected, complete, finite-volume hyperbolic $3$-orbifold $M=\Hthree/\Gamma$ is \textit{mixed-platonic} if there is a $\Gamma$-invariant tiling of $\Hthree$ with $n_1$ $\Gamma$-orbits of regular ideal tetrahedra, and $n_2$ of regular ideal octahedra, such that $n_1 \geq 1$ and $n_2 \geq 1$. 
For convenience, the tetrahedra and octahedra in a $\Gamma$-invariant decomposition as above are called \textit{tiles}. \end{definition}

This definition is more restrictive than it absolutely has to be, as we now briefly address.

\begin{remark} We have restricted to finite-volume orbifolds out of convenience (specifically, for counting tiles), since the examples of interest in this paper have finite volume.
\end{remark}

\begin{remark} It is natural to contemplate an \textit{a priori} broader definition than \Cref{mtothep}, in which regular ideal cubes, dodecahedra, or icosahedra could also occur as tiles. However in such a tiling, a cube (or, respectively, a dodecahedron) could only abut another cube (resp.~dodecahedron) along a face, since it is the only platonic solid with quadrilateral (resp.~pentagonal) faces. A regular ideal icosahedron does have ideal triangular faces, but 
it has a dihedral angle of $3\pi/5$ 
around every edge. For an edge of a putative tiling of $\mathbb{H}^3$ by regular ideal tetrahedra, octahedra, and/or icosahedra, if $T,O,I$ respectively count the number of tetrahedra, octahedra, and icosahedra incident to this edge we must have $\pi/3\, T + \pi/2\, O + 3\pi/5\, I = 2\pi$. But all non-negative integer solutions to this have $I=0$. Thus like cubes and dodecahedra, icosahedra do not appear as tiles of mixed platonic manifolds.
\end{remark}

\begin{definition} We call a knot or link in $S^3$ \textit{mixed-platonic} if its complement admits the structure of a mixed-platonic complete hyperbolic $3$-manifold.\end{definition} 

Given the constraint $n_1 \geq 1$ and $n_2 \geq 1$, we have that the figure-eight knot complement and the Whitehead link complement are not mixed-platonic. However, there are mixed-platonic manifolds that have been known for some time: 

\begin{example}\label{ex:mph}
    There are examples of mixed-platonic manifolds described in \cite[Section 5.6]{book} that are hyperbolic link complements.  These are constructed as ``hybrids'' of the Whitehead link (tiled by ideal  octahedra) and a 4-component chain link (tiled by ideal tetrahedra): by cutting along embedded 3-punctured spheres in each (which are necessarily totally geodesic) and gluing the resulting pieces along those 3-punctured spheres. In each original link complement, the three-punctured sphere to be cut along is a union of faces of the link complement's  decomposition into regular polyhedra; thus the link complement produced by gluing inherits its polyhedral decomposition from those of the originals.
\end{example}

\begin{example}\label{ex: Boyd knot}
    The complement of the knot pictured on the left in \Cref{fig:Boyd} is a mixed-platonic manifold. {This knot has the name $12n706$ in the census ``\texttt{HTLinkExteriors}'' that ships with SnapPy \cite{SnapPy}}; 
    in particular, it has $12$ crossings and is non-alternating. It was studied by D. W. Boyd and also appears in \cite[\S7.1]{GoodmanHeardHodgson}. This knot complement decomposes into 6 regular ideal tetrahedra and 2 regular ideal octahedra. An image of the tiling restricted to the boundary of horoball at $\infty$ of height $1$ is shown in \Cref{fig:Boyd}. Here the meridian translates by $-\bar{\omega}+i = \frac{-1+i(\sqrt{3}+2)}{2}$, a distance of $\sqrt{2+\sqrt{3}}$; compare \Cref{neilolarry} below.

    \begin{figure}[ht]
    \centering
    \includegraphics[height=4cm]{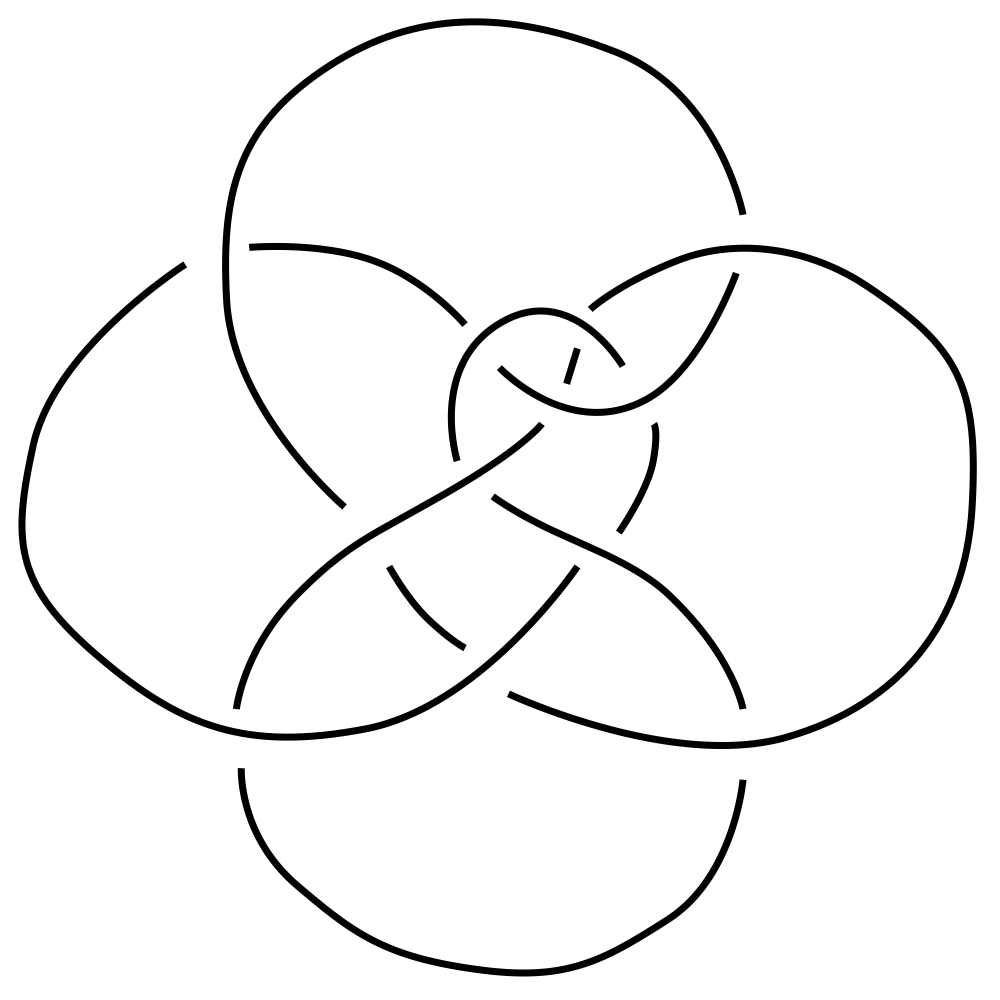}
    \hspace{.5cm}
    \includegraphics[height=4cm]{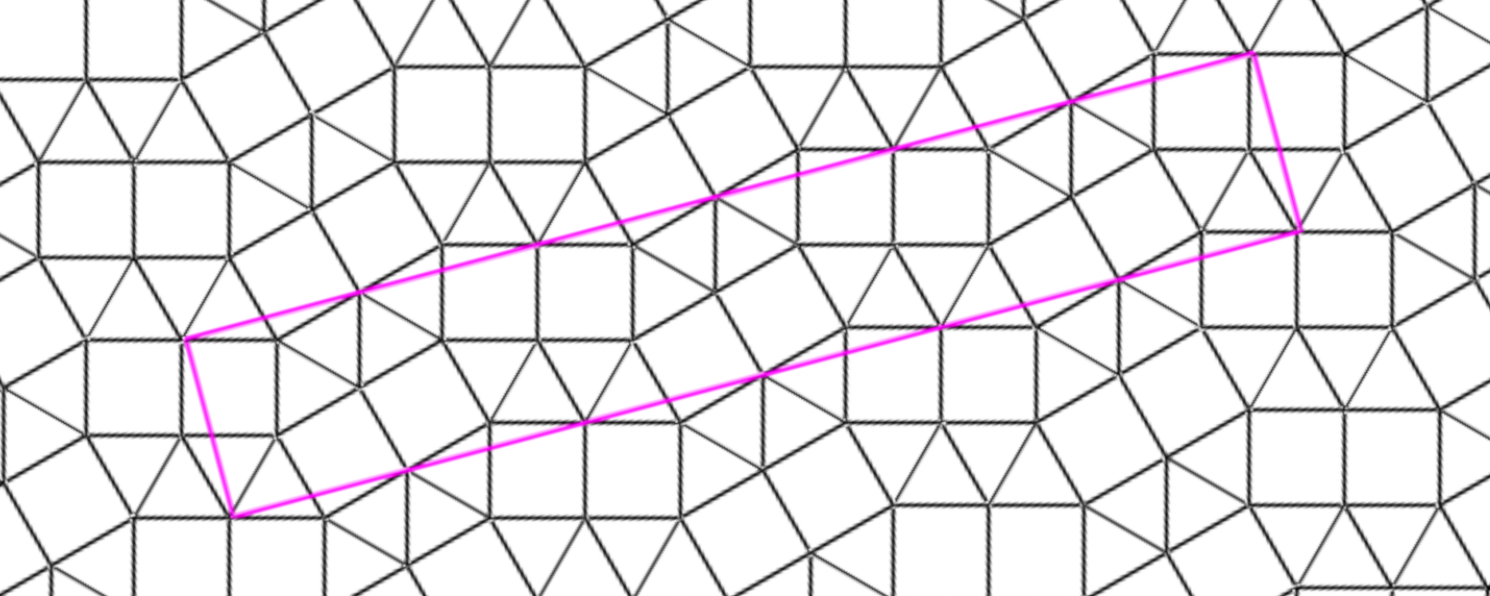}
    \caption{Knot $12n706$ and the peripheral tiling of its complement from SnapPy \cite{SnapPy} rotated to be in a standard position.}
    \label{fig:Boyd}
\end{figure}

    A notable structural feature of $M_B := S^3 - 12n706$ is visible in the fundamental domain for the knot complement's peripheral subgroup that is outlined in pink on the right-hand side of \Cref{fig:Boyd}: it is a rectangle with rationally related sidelengths. This implies that its \textit{cusp field} is $\mathbb{Q}(i)$, smaller than the \textit{shape field} generated by the tetrahedral parameters determined by its polyhedral decomposition (see \Cref{non arithmetic} below). 
    
    $M_B$ does not itself have hidden symmetries---this can be seen directly from the fact that the horoball tiling in \Cref{fig:Boyd} lacks the requisite rotational symmetries, or as a consequence of Theorem 1.1 of \cite{NeilOrbiCusps}. But the fact that its cusp field is smaller than its shape field---a very rare property among census knot complements, and likely a necessary one for the existence of hidden symmetries---suggests that mixed-platonic manifolds are worth further study vis a vis hidden symmetries.
 \end{example}   

Our next example shows that a mixed-platonic manifold can cover a one-cusped, rigid-cusped orbifold. Its features described here can be confirmed directly using SnapPy \cite{SnapPy}. 

\begin{example}\label{s913}
The two-cusped manifold $s913$ in SnapPy's \texttt{OrientableCuspedCensus} is mixed-platonic, decomposing as the union of one regular ideal octahedron and two regular ideal tetrahedra. Subdividing the octahedron produces a decomposition of $s913$ into six tetrahedra. It is the only census manifold with eight or fewer tetrahedra that is mixed-platonic: in fact it and $v2774$, which we will discuss further in \Cref{v2774}, are the only such manifolds whose volumes decompose as a positive integer linear combination of those of both the regular ideal tetrahedron and octahedron. We have checked this rigorously, running the verified computations of SnapPy within Sage \cite{sagemath} (script included as an ancillary file).

(Among the nine-tetrahedron census manifolds, at least  \texttt{o9\_44005}, \texttt{o9\_44006}, \texttt{o9\_44010}, and \texttt{o9\_44012} have mixed platonic decompositions. Interestingly, for each pair of manifolds in this list, there is a four-fold cover common to both indicating that these four manifolds belong to the same commensurability class.)  

The {orientation-preserving} symmetry group of $s913$ contains an order-four rotation whose axis of symmetry runs out of a cusp, and another which exchanges the two cusps. Therefore the quotient $O'$ of $s913$ by its orientation-preserving symmetry group is a one-cusped, rigid-cusped orbifold with a $(2,4,4)$ cusp.

\begin{figure}[ht]
\centering
    \includegraphics[height=6cm]{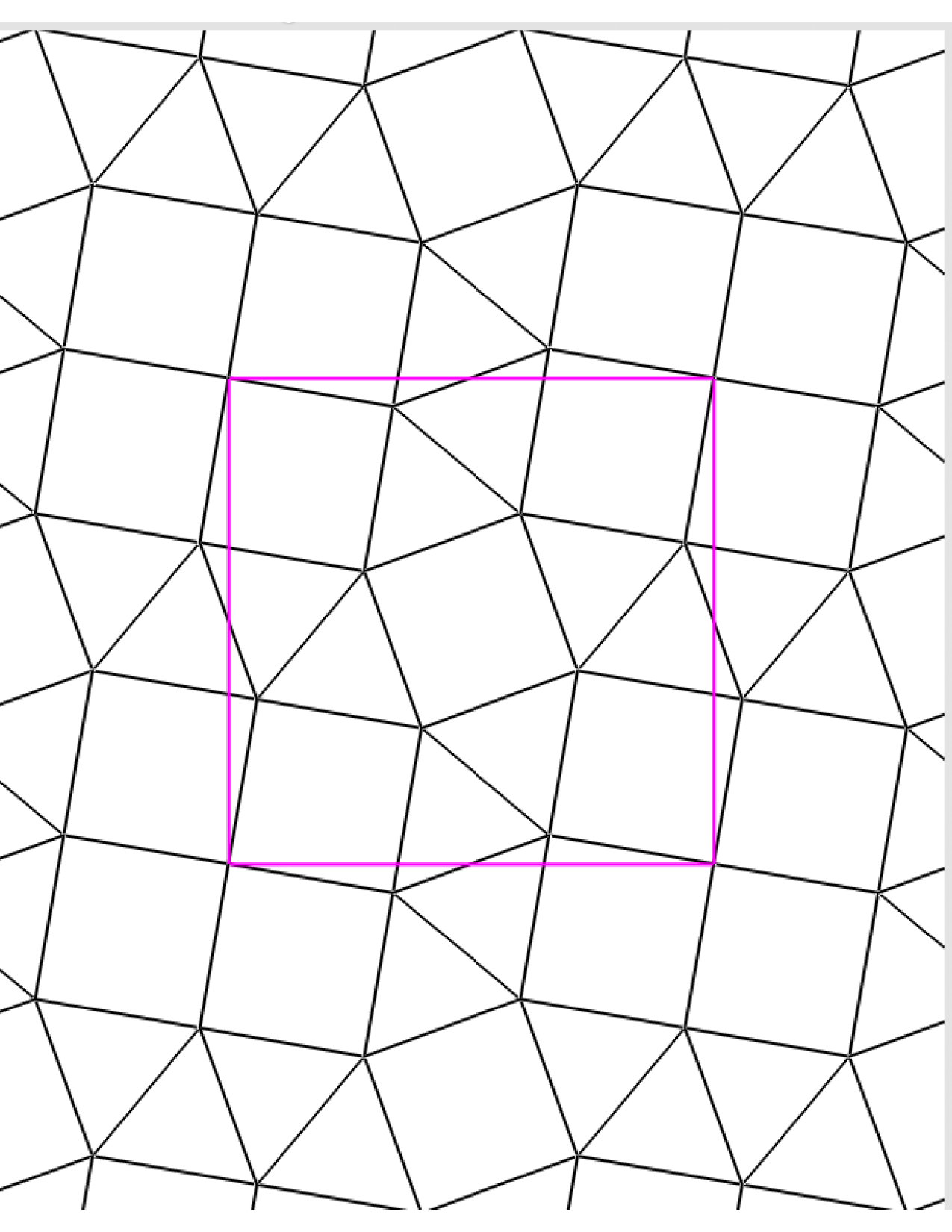}
    \hspace{.5cm}
    \includegraphics[height=6cm]{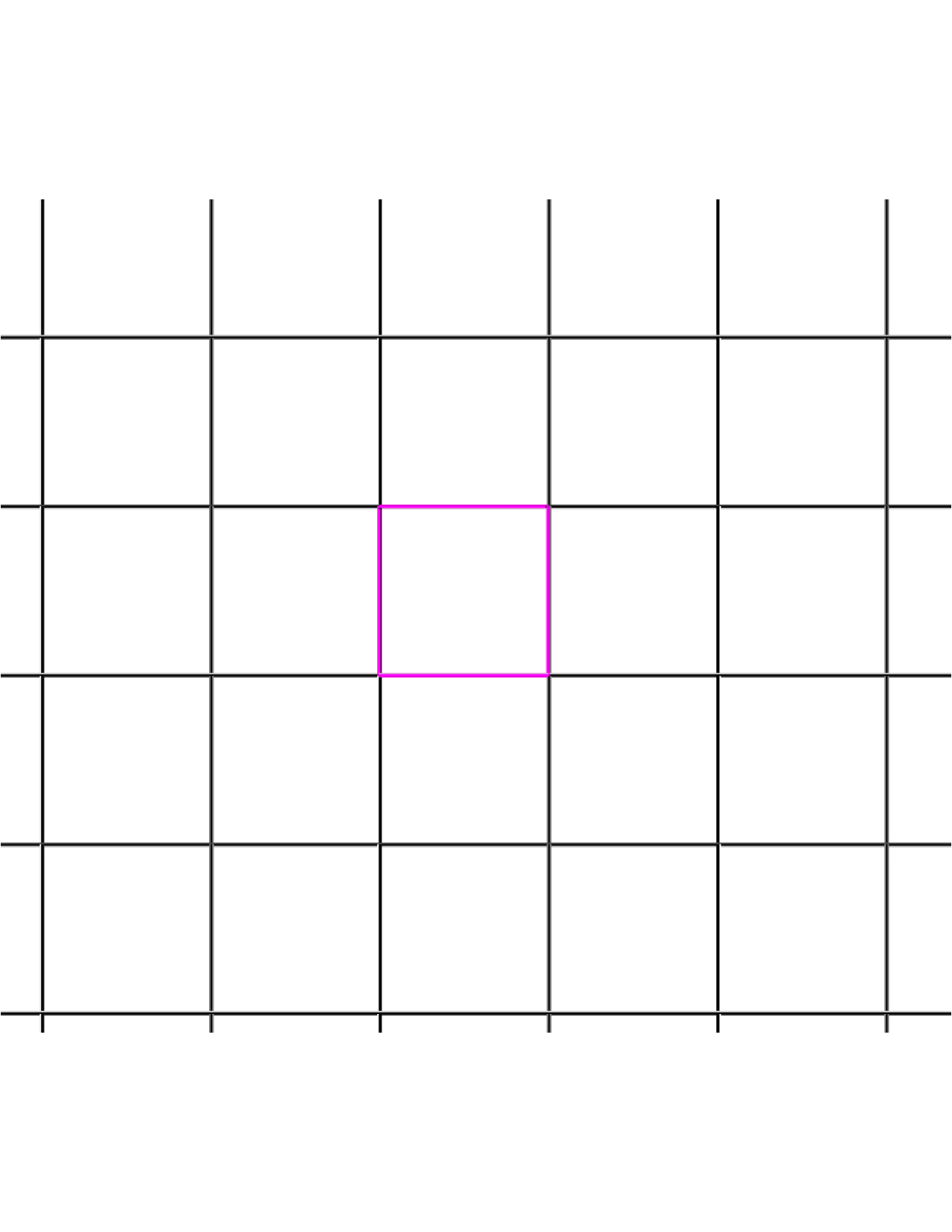}
\caption{\label{fig:s913} The two cusps of $s913$ induced by a mixed-platonic decomposition. Left: One cusp is tessellated by triangles and squares. Right: The other cusp is tessellated only by squares.} 
\end{figure}

As can be seen from horoball cross-sections of the two cusps of $s913$ (see \Cref{fig:s913}), no cusp-exchanging symmetry preserves its decomposition into an octahedron and tetrahedra. Therefore $O'$ does not inherit the structure of a mixed-platonic orbifold from this decomposition of $s913$. Other interesting features of $s913$ are discussed in \Cref{v2774}. 
\end{example}

\section{Basic Results}\label{basic}

In the arguments below, it will often be convenient to place the tiling of $\mathbb{H}^3$ by regular ideal tetrahedra and octahedra associated to a mixed-platonic orbifold in a ``best'' form. This is the \textit{standard position} of our first definition.

\begin{definition}\label{canon pos} Using the upper half-space model for $\Hthree$, we say that the regular ideal tetrahedron is in \textit{standard position} if its ideal vertices are at $0$, $1$, $\infty$ and $\omega = \frac{1}{2}(1+i\sqrt{3})$; and that the regular ideal octahedron is in \textit{standard position} if its ideal vertices are at $0$, $1$, $\infty$, $1+i$, $i$, and $\frac{1}{2}(1+i)$. For a regular ideal tetrahedron or octahedron in standard position, we name its face $\Delta$ that has ideal vertices at $0$, $1$, and $\infty$.

A tiling of $\mathbb{H}^3$ by regular ideal tetrahedra and/or octahedra is in \textit{standard position} if it has a tile in standard position, and we say that a mixed-platonic orbifold $\Hthree/\Gamma$ is \textit{represented in standard position} if $\Gamma$ leaves invariant a tiling of $\Hthree$ by regular ideal tetrahedra and octahedra that is in standard position.
\end{definition}

We begin with a lemma that shows that mixed-platonic hyperbolic manifolds and orbifolds are non-arithmetic. This lemma draws out an important feature of the definition of mixed-platonic manifolds in that there are a non-zero number of octahedra and a non-zero number of tetrahedra in any tiling corresponding to a mixed-platonic manifold.

\begin{lemma}\label{non arithmetic} The invariant trace field of a mixed-platonic hyperbolic $3$-manifold is $\mathbb{Q}(\zeta_{12})=\mathbb{Q}(\omega,i)$ where $\zeta_{12}$ is a $12$th root of unity and $\omega=\frac{1+\sqrt{-3}}{2}$. In particular, mixed-platonic hyperbolic $3$-manifolds are non-arithmetic.
\end{lemma}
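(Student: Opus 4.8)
The plan is to compute the invariant trace field directly from the polyhedral decomposition, using the fact that a mixed-platonic manifold contains \emph{both} a regular ideal tetrahedron and a regular ideal octahedron as tiles. First I would recall the standard fact (e.g.\ from Maclachlan–Reid) that the invariant trace field $k\Gamma = \mathbb{Q}(\mathrm{tr}\,\Gamma^{(2)})$ of a finite-volume hyperbolic $3$-manifold $\mathbb{H}^3/\Gamma$ is a finite-degree number field, and that it contains the \emph{cusp field} — equivalently, for each ideal vertex of a geometric triangulation/polyhedral decomposition, it contains the field generated by the shape parameters of the ideal simplices obtained by (barycentrically) subdividing the polyhedra meeting that vertex. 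Concretely: placing one tile in standard position (\Cref{canon pos}), the regular ideal tetrahedron has shape parameter $\omega = \frac{1}{2}(1+i\sqrt{3}) = \zeta_6$, and subdividing the regular ideal octahedron into ideal tetrahedra yields simplices with shape parameter $i$ (the octahedron with vertices $0,1,\infty,1+i,i,\frac{1+i}{2}$ decomposes into tetrahedra whose cross-ratios are $i$). Hence both $\omega$ and $i$ lie in a field commensurable with $k\Gamma$, giving $\mathbb{Q}(\omega, i) = \mathbb{Q}(\zeta_{12}) \subseteq k\Gamma$ after passing to the appropriate finite extension.

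The reverse inclusion is the place to be slightly careful, and is where I'd expect the only real subtlety. The clean way is to use the horoball packing structure: by \Cref{packing} (the standard horoball packing for mixed-platonic orbifolds), the cusp cross-sections are tiled by Euclidean equilateral triangles and squares of a common sidelength, and all the ideal vertices of all tiles lie in $\mathbb{Q}(\zeta_{12}) = \mathbb{Z}[\zeta_{12}] \otimes \mathbb{Q}$ once the tiling is in standard position — indeed the vertex set of the combined tiling is contained in the ring generated by $1, \omega, i$, which is $\mathbb{Z}[\zeta_{12}]$. Since $\Gamma$ is generated by face-pairing isometries of tiles, each such isometry is determined by where it sends the (at most four) ideal vertices of a face, all of which lie in $\mathbb{Q}(\zeta_{12})$; a parabolic or loxodromic element of $\mathrm{PSL}_2(\mathbb{C})$ carrying one such triple of points to another has entries in $\mathbb{Q}(\zeta_{12})$ up to scaling, so $\mathrm{tr}^2$ of every element lies in $\mathbb{Q}(\zeta_{12})$. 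Therefore $k\Gamma \subseteq \mathbb{Q}(\zeta_{12})$. Combined with the previous paragraph, $k\Gamma = \mathbb{Q}(\zeta_{12})$ exactly. (One can alternatively invoke the description of the trace field as generated by shapes together with the fact that the tetrahedral and octahedral shape fields are $\mathbb{Q}(\omega)$ and $\mathbb{Q}(i)$ respectively, whose compositum is $\mathbb{Q}(\zeta_{12})$, and then bound the trace field above using that all vertices lie in $\mathbb{Q}(\zeta_{12})$.)

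For the non-arithmeticity conclusion: $\mathbb{Q}(\zeta_{12})$ is a totally imaginary quadratic extension of its maximal totally real subfield $\mathbb{Q}(\sqrt{3})$, so it has degree $4$ over $\mathbb{Q}$ and is not imaginary quadratic. A finite-volume arithmetic hyperbolic $3$-manifold has invariant trace field equal to a field with exactly one complex place, i.e.\ a number field that is \emph{not} totally real and has only one pair of complex embeddings; but more to the point, for arithmeticity one also needs the invariant quaternion algebra to be ramified at all real places, which forces the invariant trace field to have a specific form. The quickest route is: an arithmetic Kleinian group has invariant trace field that is a number field with exactly one complex place. Since $\mathbb{Q}(\zeta_{12})$ has two complex places (it is a CM field of degree $4$, totally imaginary, hence two pairs of complex embeddings), $\Gamma$ cannot be arithmetic. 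This step is routine once the trace field is identified, so the main obstacle is really just the careful bookkeeping in establishing $k\Gamma = \mathbb{Q}(\zeta_{12})$ — in particular making sure one works with $\Gamma^{(2)}$ / squares of traces and that the subdivision of the octahedron genuinely contributes $i$ (and nothing larger) to the field.
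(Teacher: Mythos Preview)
Your approach is essentially the same as the paper's: compute the invariant trace field from the shape parameters of an ideal triangulation obtained by subdividing the polyhedral decomposition, then observe that $\mathbb{Q}(\zeta_{12})$ has two complex places so arithmeticity fails. Two small points of comparison. First, the paper invokes Neumann--Reid's result that the invariant trace field \emph{equals} the field generated by the shape parameters of any ideal triangulation; this gives both inclusions $k\Gamma = \mathbb{Q}(\omega,i)$ at once, so your separate upper-bound argument via face-pairings (which is correct, and is essentially the content of a later proposition in the paper) is unnecessary here. Second, the paper's subdivision of the octahedron---into four tetrahedra sharing the edge from $\infty$ to $\tfrac{1+i}{2}$---produces tetrahedra of shape $\tfrac{1+i}{2}$ rather than $i$; of course $\mathbb{Q}\!\left(\tfrac{1+i}{2}\right) = \mathbb{Q}(i)$, so your conclusion is unaffected, but you should double-check which subdivision you have in mind before asserting the specific value $i$.
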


\begin{proof}
    A necessary condition for a finite-volume hyperbolic $3$-manifold to be arithmetic is that its invariant trace field has exactly one complex place (see \cite{book}).
    Neumann and Reid show in \cite[Theorem 2.4]{NeumannReidArith} that the invariant trace field is equal to the number field generated by the shape parameters of a decomposition into ideal tetrahedra. The \textit{shape parameter} of an ideal tetrahedron with vertices at $0$, $1$, $\infty$, and some $z$ in the upper half-plane is defined to be $z$; it is known that this can always be arranged by an isometry, and that $z$ determines the tetrahedron up to isometry. 
    
    Viewing the regular ideal tetrahedron $T$ and octahedron $O$ in standard position, one sees directly that $T$ has shape parameter $\omega$, and that $O$ divides into four isometric ideal tetrahedra sharing an edge with ideal points $\infty$ and $\frac{1}{2}(1+i)$. One, and therefore all, of these tetrahedra is readily seen to have shape parameter $\frac{1}{2}(1+i)$. A mixed-platonic manifold $M = \mathbb{H}^3/\Gamma$ inherits a decomposition into regular ideal tetrahedra and octahedra projected from the $\Gamma$-invariant tiling of $\mathbb{H}^3$; dividing the octahedra further as described above yields a decomposition into ideal tetrahedra with shape parameters $\omega$ and $\frac{1}{2}(1+i)$. Since both shapes must be represented in the decomposition, every mixed-platonic manifold has invariant trace field $\mathbb{Q}(i,\omega)$. This field has 2 complex places and hence mixed-platonic manifolds are not arithmetic.
\end{proof}

Note that for an arbitrary mixed-platonic orbifold $\Hthree/\Gamma$ with a tiling $\Pthree$ by tetrahedra and octahedra stabilized by $\Gamma$, there is an orientation-preserving isometry $\phi$ such that $\phi(\Pthree)$ has a face with vertices at $0$, $1$, and $\infty$, and hence $\phi\Gamma\phi^{-1}$ is represented in standard position. 

\begin{prop}\label{glop} For any orientable mixed-platonic orbifold $\Hthree/\Gamma$ represented in standard position we have $\Gamma < \operatorname{PSL}_2(\ZZtwelve)$.
\end{prop}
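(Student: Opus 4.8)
Let $G$ be the group of orientation-preserving isometries of $\Hthree$ that preserve the $\Gamma$-invariant tiling $\Pthree$; since $\Gamma \le G$ it suffices to show $G \le \operatorname{PSL}_2(\ZZtwelve)$. Write $Q_\ast$ for the tile of $\Pthree$ in standard position, either the standard regular ideal tetrahedron $T_0 = \{0,1,\infty,\omega\}$ or octahedron $O_0 = \{0,1,\infty,1+i,i,\tfrac12(1+i)\}$; in both cases $\Delta = \{0,1,\infty\}$ is a face. The plan rests on the following claim: \emph{every tile of $\Pthree$ has the form $h\cdot Q_0$ for a standard tile $Q_0 \in \{T_0,O_0\}$ and some $h$ admitting a matrix over $\ZZtwelve$ of unit determinant.} Granting it, for $\gamma\in G$ write $\gamma(Q_\ast)=hQ_0$; then $h^{-1}\gamma$ carries $Q_\ast$ to $Q_0$, forcing $Q_0=Q_\ast$ and $h^{-1}\gamma\in\operatorname{Sym}^+(Q_\ast)$. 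A direct computation shows that $\operatorname{Sym}^+(T_0)\cong A_4$ and $\operatorname{Sym}^+(O_0)\cong S_4$ likewise consist of Möbius maps with unit-determinant matrices over $\ZZtwelve$, so $\gamma$ has such a representative, i.e.\ $\gamma\in\operatorname{PGL}_2(\ZZtwelve)$.

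The claim I would prove by induction along the dual graph of $\Pthree$ (tiles, with an edge whenever two tiles share a $2$-face), which is connected because $\Hthree$ is. The base case is $Q_\ast=1\cdot Q_\ast$. For the step, suppose $Q=hQ_0$ is as in the claim and $Q'$ shares a face $F$ with $Q$; then $X:=h^{-1}(Q')$ is a regular ideal polyhedron sharing the face $F_0:=h^{-1}(F)$ of $Q_0$ and lying on the side of its plane opposite $Q_0$. Choose $\tau\in\operatorname{Sym}^+(Q_0)$ with $\tau(\Delta)=F_0$ (the tetrahedral and octahedral groups are face-transitive, and $\Delta$ is a face of both $T_0$ and $O_0$); then $\tau^{-1}X$ meets $\Delta$ on the side opposite $Q_0$, hence is one of the two regular ideal polyhedra on that side of $\Delta$, namely $\{0,1,\infty,\bar\omega\} = (z\mapsto1-z)(T_0)$ or $\{0,1,\infty,-i,1-i,\tfrac12(1-i)\} = (z\mapsto1-z)(O_0)$. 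Since $z\mapsto1-z$ has matrix $\bigl(\begin{smallmatrix}-1&1\\0&1\end{smallmatrix}\bigr)$ of unit determinant, $Q'=h\tau(z\mapsto1-z)(\text{standard tile})$ with the prefactor in $\operatorname{PGL}_2(\ZZtwelve)$, closing the induction. The integrality input — that the elements of $\operatorname{Sym}^+(T_0)$, $\operatorname{Sym}^+(O_0)$ and the map $z\mapsto1-z$ have unit-determinant matrices over $\ZZtwelve$ — reduces to the observation that ideal vertices joined by an edge of $T_0$ or $O_0$ are in ``unit position'' (their $2\times2$ determinant in lowest-terms homogeneous $\ZZtwelve$-coordinates is a unit), so the cross-ratio matrices involved clear denominators to unit determinant; this induction also shows every ideal vertex of $\Pthree$ lies in $\mathbb{P}^1(\QQtwelve)$, matching \Cref{non arithmetic}.

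It remains to sharpen ``unit determinant'' to ``square determinant'' — equivalently, to show the determinant-modulo-squares homomorphism $d\colon\Gamma\to\ZZtwelve^{\times}/(\ZZtwelve^{\times})^2$ is trivial — which is exactly the step that distinguishes $\operatorname{PSL}_2(\ZZtwelve)$ from $\operatorname{PGL}_2(\ZZtwelve)$ and which I expect to be the main obstacle. (That it is a genuine issue is visible already in $\operatorname{Sym}^+(O_0)$: the order-$4$ rotation $z\mapsto iz+1$ has non-square determinant $i$ and is not in $\operatorname{PSL}_2(\ZZtwelve)$, which indeed contains no element of order $4$ since such an element has trace $\pm\sqrt2\notin\QQtwelve$.) As a first step, $d$ annihilates every parabolic: conjugating its fixed point to $\infty$ by an element of $\operatorname{PSL}_2(\ZZtwelve)$ — possible because $\ZZtwelve$ is a PID, so $\operatorname{SL}_2(\ZZtwelve)$ acts transitively on $\mathbb{P}^1(\QQtwelve)$ — normalizes it to $z\mapsto z+t$ with $t\in\ZZtwelve$, of determinant $1$. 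Hence $\ker d$ contains all peripheral subgroups; for knot-complement targets this already forces $d\equiv0$ (the meridian generates $H_1$), and in general I would finish by analyzing the orbits of tiles around the singular edges of $\Pthree$, whose dihedral-angle sums $\tfrac{\pi}{3}T+\tfrac{\pi}{2}O = 2\pi/m$ constrain both the cone orders $m$ and the relevant determinants, to conclude that $\Gamma$ carries no element of non-square determinant.
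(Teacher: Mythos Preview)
Your inductive walk through the dual graph of $\Pthree$ is the same strategy as the paper's: both arguments pass from tile to adjacent tile, accumulating an element with $\ZZtwelve$-entries that carries an arbitrary tile back to standard position, and then absorb a residual self-isometry of the standard tile. The paper packages each step via transitivity of $\operatorname{PSL}_2(\mathbb{Z}[\omega])$ and $\operatorname{PSL}_2(\mathbb{Z}[i])$ on the pure tetrahedral and octahedral tilings; you use a face-transitive $\tau\in\operatorname{Sym}^+(Q_0)$ followed by the flip $z\mapsto 1-z$. These are equivalent bookkeeping.

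Where you are sharper is in flagging the $\operatorname{PSL}_2$/$\operatorname{PGL}_2$ distinction, and you are right to worry. Your observation that the order-four rotation $z\mapsto iz+1\in\operatorname{Sym}^+(O_0)$ has no $\operatorname{SL}_2(\ZZtwelve)$ representative (an order-four element of $\operatorname{PSL}_2$ has trace $\pm\sqrt{2}\notin\QQtwelve$) is correct; the six odd involutions of $S_4$ likewise have determinant class $\pm i$, a non-square unit. In fact the paper's cited ``well-known fact'' that the stabilizer of $O$ in $\operatorname{PSL}_2(\mathbb{Z}[i])$ is the full $S_4$ is false for exactly this reason---that stabilizer is only $A_4$, which has two orbits on the eight faces---so the paper's own final step (finding $g_0$ in the stabilizer carrying $(\gamma_0\gamma)(\Delta)$ back to $\Delta$) has the same gap you identify. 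Your induction therefore honestly lands only in $\operatorname{PGL}_2(\ZZtwelve)$, and the passage to $\operatorname{PSL}_2$ via the determinant-mod-squares homomorphism is the real content. Your argument that this homomorphism kills parabolics, hence all of $\Gamma$ when $\Gamma$ is a knot group, is correct and is all that the paper's downstream use in \Cref{neilolarry} actually requires. But your closing sketch for general orbifolds is not a proof, and indeed if a mixed-platonic orbifold with an order-four element exists---a possibility the paper itself entertains in \Cref{lem_cusp_volume}(3)---the proposition as stated would fail for it. So: same approach as the paper, a genuine issue correctly diagnosed, fully resolved where it matters, and left open (perhaps requiring weakening to $\operatorname{PGL}_2$) in the general orbifold case.
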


\begin{proof} Let $\Pthree$ be a $\Gamma$-invariant tiling of $\Hthree$ by regular ideal tetrahedra and octahedra, in standard position. We will show that with this stipulated, $\Gamma< \operatorname{PSL}_2(\ZZtwelve)$.

This follows by an induction argument from two well known facts (see eg.~\cite{Hatcher_arithlinks}).\begin{itemize}
	\item $\operatorname{PSL}_2(\mathbb{Z}[\omega])$ preserves a tiling of $\Hthree$ by regular ideal tetrahedra, acting transitively on it. The tetrahedron $T$ in standard position belongs to this tiling, and its stabilizer in $\operatorname{PSL}_2(\mathbb{Z}[\omega])$ acts as the full orientation-perserving combinatorial symmetry group.
	\item $\operatorname{PSL}_2(\mathbb{Z}[i])$ preserves a tiling of $\Hthree$ by regular ideal octahedra, acting transitively on it. The octahedron $O$ in standard position belongs to this tiling, and its stabilizer in $\operatorname{PSL}_2(\mathbb{Z}[i])$ acts as the full orientation-perserving combinatorial symmetry group.\end{itemize}

Note that both $\mathbb{Z}[\omega]$ and $\mathbb{Z}[i]$ are subrings of $\mathbb{Z}(\zeta_{12})$, and hence both groups mentioned above are subgroups of $\operatorname{PSL}_2(\ZZtwelve)$. 

Now let $S_0$ be the tile of $\Pthree$ in standard position, let $\Delta$ be its face with ideal vertices $0$, $1$, and $\infty$, and suppose $\{(S_1,F_1),\hdots,(S_n,F_n)\}$ is a sequence with the property that for each $i$, $S_i$ is a tile of $\Pthree$ and $F_i = S_i\cap S_{i-1}$ is a triangular face. We claim that there is an element $\gamma$ of $\operatorname{PSL}_2(\ZZtwelve)$ with the property that $\gamma_0(S_n) = T$ or $O$ (according on whether $S_n$ is a tetrahedron or octahedron), and $\gamma_0(F_n) = \Delta$.

The claim is proved by induction on $n$. For $n = 1$ we take $\gamma = g^{-1}$, where $g$ in $\operatorname{PSL}_2(\mathbb{Z}[\omega])$ or $\operatorname{PSL}_2(\mathbb{Z}[i])$ (according to whether $S_0$ is a tetrahedron or octahedron) has the property that $g(S_0)\cap S_0 = F_1$. For $n>1$, assuming the claim for $n-1$, we take $\gamma_0\in \operatorname{PSL}_2(\ZZtwelve)$ with the property that $\gamma_0(S_{n-1}) = T$ or $O$ and $\gamma_0(F_{n-1}) = \Delta$. We then take $\gamma = g^{-1}\gamma_0$, where $g$ in $\operatorname{PSL}_2(\mathbb{Z}[\omega])$ or $\operatorname{PSL}_2(\mathbb{Z}[i])$ (according to whether $S_{n-1}$ is a tetrahedron or octahedron) has the property that $g(\gamma_0(S_{n-1}))$ intersects $\gamma_0(S_{n-1})$ along $\gamma_0(F_n)$. The claim follows.

Now for a fixed element $\gamma$ of $\Gamma$, choose a sequence $\{(S_1,F_1),\hdots,(S_n,F_n)\}$ as in the claim with the property that $S_n = \gamma(S_0)$. The claim supplies an element $\gamma_0$ of $\operatorname{PSL}_2(\ZZtwelve)$ taking $S_n$ back to $S_0$, so $\gamma_0\circ\gamma$ stabilizes $S_0$. There is an element $g_0$ of the stabilizer of $S_0$ in $\operatorname{PSL}_2(\mathbb{Z}[\omega])$ or $\operatorname{PSL}_2(\mathbb{Z}[i])$ taking $(\gamma_0\circ\gamma)(\Delta)$ back to its original position, ie.~such that $g_0\circ\gamma_0\circ\gamma$ fixes $0$, $1$, and $\infty$. We have thus exhibited $\gamma = (g_0\gamma_0)^{-1}$ as an element of $\operatorname{PSL}_2(\ZZtwelve)$.
\end{proof}

In what follows, a collection $\mathcal{H}$ of horoballs of $\mathbb{H}^3$ is a \textit{packing} if any two distinct members of $\mathcal{H}$ have disjoint interiors. A horoball packing $\mathcal{H}$ is \textit{maximal} if for any horoball packing $\mathcal{H}'$ such that each horoball of $\mathcal{H}$ is contained in one of $\mathcal{H}'$, $\mathcal{H}'=\mathcal{H}$. It is \textit{invariant under the action of} $\Gamma$, a group of isometries, if for every $\gamma\in\Gamma$ and $B\in\mathcal{H}$, $\gamma(B)\in\mathcal{H}$.

\begin{definition}\label{deaf and itchin'} For a regular ideal tetrahedron or octahedron $P$, let $\mathcal{H}_P$ be the unique maximal horoball packing by horoballs centered at its ideal vertices that is invariant under the action of the full self-isometry group of $P$.
\end{definition}

\begin{remark}\label{bort} If $P$ is a regular ideal tetrahedron in standard position as in \Cref{canon pos} then for each ideal vertex $p\in\{0,1,\omega\}$, the horoball $B_p$ of $\mathcal{H}_P$ centered at $p$ is the intersection with $\mathbb{H}^3$ of a Euclidean ball of radius $1/2$ that is tangent to $\mathbb{C}$ at $p$. The horoball $B_{\infty}$ centered at the final ideal vertex $\infty$ of $P$ is:
\begin{align}\label{my son is also named bort} 
    B_{\infty} = \{(z,t)\,|\,z\in\mathbb{C}, t\ge 1\}.
\end{align}
One can check this directly, for instance by observing that it is invariant under the order-three rotation fixing $\infty$ and exchanging the other three vertices, and under the rotation fixing $\omega$ and cyclically permuting $0$, $1$, and $\infty$.

If $P$ is a regular ideal octahedron in standard position then again $B_{\infty}$ is as above, and for $p\in\{0,1,1+i,i\}$ the horoball $B_p$ is again contained in a Euclidean ball of radius $1/2$ tangent to $\mathbb{C}$ at $p$. For $p = \frac{1+i}{2}$, $B_p$ is the intersection with $\mathbb{H}^3$ of a Euclidean ball of radius $1/4$ that is tangent to $\mathbb{C}$ at $p$. This can again be checked directly.
\end{remark}

\begin{lemma}\label{packing} Suppose $O = \Hthree/\Gamma$ is a mixed-platonic orbifold, and that $\Pthree$ is a $\Gamma$-invariant tiling of $\Hthree$ by regular ideal octahedra and regular ideal tetrahedra. There is a unique $\Gamma$-invariant horoball packing $\mathcal{H}$ of $\Hthree$ such that (1) the set of centers of horoballs of $\mathcal{H}$ is the set of ideal vertices of tiles of $\Pthree$; and (2) for each tile $P$ of $\Pthree$ and each ideal vertex $p$ of $P$, the horoball $B_p$ of $\mathcal{H}$ centered at $p$ belongs to $\mathcal{H}_P$ from \Cref{deaf and itchin'}. Each such horoball $B_p$ intersects a tile $P$ of $\Pthree$ if and only if $P$ has $p$ as an ideal vertex.

If $\Gamma$ is represented in standard position in the sense of \Cref{canon pos}, in the upper half-space model for $\Hthree$, then $\mathcal{H}$ contains a horoball $B_{\infty}$ centered at $\infty$ from (\ref{my son is also named bort}).

Conversely, $\mathcal{P}$ is the unique tiling by ideal polyhedra with edge set equal to the collection of geodesics joining the centers of tangent horoballs of $\mathcal{H}$ and face set equal to the collection of convex hulls of such geodesics corresponding to triples of pairwise-tangent horoballs of $\mathcal{H}$.\end{lemma}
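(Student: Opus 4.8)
The plan is to construct $\mathcal H$ by prescribing a single horoball at each ideal vertex of $\mathcal P$, using the rigidity of the packings $\mathcal H_P$ together with the explicit description in \Cref{bort}, and then to verify the asserted properties one at a time. Uniqueness is essentially formal: for an ideal vertex $p$ of $\mathcal P$ and a tile $P \ni p$, the packing $\mathcal H_P$ contains exactly one horoball centered at $p$ (two horoballs with the same center are nested, so their interiors are not disjoint), so properties (1) and (2) force the horoball of $\mathcal H$ at $p$ to be that one. Existence then amounts to checking that this prescription is consistent, i.e.\ that the horoball of $\mathcal H_P$ centered at $p$ is independent of the tile $P \ni p$. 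The tiles containing $p$, with two of them declared adjacent when they share a face through $p$, form a connected graph---it is the dual $1$-skeleton of the tiling induced on a small horosphere at $p$, and that horosphere is connected and entirely covered by vertex figures of such tiles because $p$ is a genuine cusp of the finite-volume orbifold $O$, so $\Gamma_p$ acts cocompactly on horospheres centered at $p$. Hence it suffices to compare two tiles $P,P'$ sharing a single face $F \ni p$; an isometry carrying $F$ to $\Delta$ puts both in a position covered by \Cref{bort}, which identifies the horoballs of $\mathcal H_P$ and of $\mathcal H_{P'}$ centered at each of the three vertices of $\Delta$ with the same explicit Euclidean balls, and in particular at $p$. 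Writing $B_p$ for this common horoball, $\mathcal H := \{B_p\}$ satisfies (1) and (2) by construction; it is $\Gamma$-invariant because each $\gamma \in \Gamma$ carries $P$ to a tile $\gamma(P)$ and $\mathcal H_P$ to $\mathcal H_{\gamma(P)}$ (by the uniqueness clause of \Cref{deaf and itchin'}), so $\gamma(B_p) = B_{\gamma(p)}$; and if $\Gamma$ is represented in standard position then $\infty$ is a vertex of the standard tile and $B_\infty$ is the horoball of (\ref{my son is also named bort}) by \Cref{bort}.

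The crux is the claim that $B_p$ meets a tile $P$ exactly when $p$ is a vertex of $P$. The ``if'' direction is immediate from (2), since the member of $\mathcal H_P$ centered at a vertex of $P$ meets $\mathrm{int}(P)$. For ``only if'', using the naturality above I would apply an isometry and a dilation to arrange that $p = \infty$, that $B_\infty = \{(z,t) : t \ge 1\}$, and that the tiling is in standard position, and then show no tile lacking $\infty$ as a vertex meets $\{t \ge 1\}$. A short induction along the tiles with $\infty$ as a vertex (adjacent when they share a vertical face, hence forming a connected graph) shows each such tile has a unit vertex figure at $\infty$; consequently each of its faces not containing $\infty$ is a Euclidean hemisphere of radius $1/\sqrt3$ or $1/2$, and the tile lies on the side of that hemisphere away from the half-ball it bounds. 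Now let $h^\ast$ be the supremum of Euclidean heights attained on tiles without $\infty$ as a vertex; only finitely many such tiles (modulo the height-preserving group $\Gamma_\infty$) reach height $\tfrac12$, so if $h^\ast \ge 1$ it is attained, at the topmost point $x_0$ of some such tile $Q_0$. The tile $Q_1$ lying directly above $x_0$ has points above height $h^\ast$, hence has $\infty$ as a vertex; then $x_0 \in Q_0 \cap Q_1$ lies on a common face or edge of $Q_1$ not containing $\infty$, and the explicit data of \Cref{bort} bound the height of any such face or edge by $\tfrac12 < 1$, contradicting $h^\ast \ge 1$. So $h^\ast < 1$ and no tile without $\infty$ as a vertex meets $B_\infty$. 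I expect this ``no leakage'' step to be the main obstacle: it is where the explicit Euclidean geometry and the finite-volume hypothesis are genuinely used, and the subcase in which $x_0$ lies on an edge (rather than the interior of a face) of $Q_1$ needs a little extra care.

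Granting the claim, the packing property follows at once: a point of $\mathrm{int}(B_p) \cap \mathrm{int}(B_q)$ with $p \ne q$ lies in a tile $P$ with $p$ as a vertex, so $q$ is not a vertex of $P$ (else $B_p$ and $B_q$ would be distinct members of the packing $\mathcal H_P$), whence $B_q \cap P = \emptyset$ by the claim---a contradiction. For the converse description, if $pq$ is an edge of $\mathcal P$ then $p$ and $q$ are vertices of a common tile, and from \Cref{bort} (after putting that tile in standard position) one reads off that $B_p$ and $B_q$ are tangent with tangency point on the edge $pq$; conversely, if $B_p$ and $B_q$ are tangent then their tangency point lies in a tile containing $p$ and in a tile containing $q$ by the claim, and the standard-position pictures---in which $\partial B_p$ meets only the faces and edges of a tile incident to $p$---show that $p$ and $q$ span an edge of $\mathcal P$. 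The face statement is the same in the easy direction (the three vertices of a face of $\mathcal P$ give pairwise-tangent horoballs whose three connecting geodesics span exactly that face); for the reverse direction, sending one of the three vertices to $\infty$ reduces the assertion to the elementary fact that in any tiling of the plane by unit equilateral triangles and unit squares two vertices at distance $1$ bound an edge, which one checks by tracking the segment between them through the tiling. With the edge description this pins down the face set of $\mathcal P$, and uniqueness is then automatic: a tiling by ideal polyhedra is recovered from its set of faces as the closures of the components of the complement of the union of those faces.
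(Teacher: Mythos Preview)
Your proposal is correct and follows essentially the same architecture as the paper's proof: consistency of the $\mathcal{H}_P$ at shared vertices via face-adjacency, $\Gamma$-invariance from naturality, the ``no leakage'' claim that $B_p$ meets only tiles through $p$, packing as a consequence, and then the edge/face characterizations.

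The one genuine divergence is your argument for ``no leakage''. The paper proceeds locally: for each tile $P$ with $\infty$ as an ideal vertex, the intersection of the half-spaces bounded by the vertical faces of $P$ (those through $\infty$) is an infinite prism, and $B_\infty$ meets this prism exactly in $B_\infty \cap P$ because $B_\infty$ lies entirely above the non-vertical faces of $P$ (checked from \Cref{bort}). Since these prisms cover $\mathbb{H}^3$---their bases form the peripheral tiling of the horosphere---$B_\infty$ is contained in the union of tiles through $\infty$. Your supremum-of-heights argument reaches the same conclusion but is more circuitous, and its appeal to cocompactness of $\Gamma_\infty$ to guarantee the supremum is attained, while valid, is heavier machinery than the paper needs; the prism argument sidesteps this by working one tile at a time. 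For the face converse, the paper makes the ``two vertices at distance $1$ bound an edge'' step explicit by examining the unit circle around a vertex of the peripheral tiling and showing that each arc between consecutive neighbours is contained in the interior of a single tile (or an adjacent one), hence contains no other vertex; your ``tracking the segment'' gesture is in the same spirit but would want that level of detail to be airtight.
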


\begin{remark}\label{canon} We will show in \Cref{that's amore} that $\Pthree$ is in fact the \emph{canonical cell decomposition} (aka \emph{Epstein-Penner decomposition}) of $\mathcal{H}$, in the sense established by Epstein-Penner \cite{EpstePen}.\end{remark}

\begin{proof}
Conditions (1) and (2) of the statement prescribe that we must have:
\[ \mathcal{H}=\bigcup_{P \in \Pthree} \mathcal{H}_P, \]
where $\mathcal{H}_P$ is as in \Cref{deaf and itchin'} for each $P\in\Pthree$. 
Note that if $\Gamma$ is represented in standard position then some $P\in\Pthree$ is in standard position and hence $B_{\infty}$ from (\ref{my son is also named bort}) belongs to $\mathcal{H}_P$ as described in \Cref{bort}. Note also that since $\Pthree$ is $\Gamma$-invariant, for any $\gamma \in \Gamma$, $\gamma$ sends $\mathcal{H}_P$ to $\mathcal{H}_{\gamma(P)}$ for each $P \in \Pthree$. Therefore $\mathcal{H}$ is also $\Gamma$-invariant.

We now observe that for any distinct $P_0$ and $P_1$ in $\Pthree$ intersecting along a face $f$, the collections of horoballs belonging to $\mathcal{H}_{P_0}$ and $\mathcal{H}_{P_1}$ and centered at ideal points of $f$ are identical. This is because their intersections with $f$ comprise a set of maximal horodisks invariant under the symmetry group of $f$, which is an ideal triangle, hence with each disk bounded by a horocyclic arc of length $1$. For any distinct $P$, $P'\in\Pthree$ sharing an ideal point $p$, there is a sequence $P = P_0,\hdots,P_n = P'$ such that $P_i$ shares a face with $P_{i-1}$ for each $i>0$. The observation above thus implies that for each ideal point $p$ of $\Pthree$, $\mathcal{H}$ has a unique horoball centered at $p$.

We next claim that for any horoball $B$ of $\mathcal{H}$ centered at an ideal point $p$ of $\mathbb{H}^3$, $B$ intersects only horoballs of $\mathcal{H}$ centered at  ideal vertices of tiles of $\Pthree$ with $p$ as an ideal vertex. For any such tile $P$, the three or four faces of $P$ with $p$ as an ideal vertex (in the respective cases that $P$ is a tetrahedron or octahedron) are each contained in a hyperplane of $\mathbb{H}^3$ that bounds a half-space containing $P$. $B$ does not intersect any face of $P$ without an ideal vertex at $p$---this can be checked directly by putting $P$ in standard position with $B = B_{\infty}$---so its intersection with the intersection of these three or four half-spaces is equal to its intersection with $P$. Therefore $B$ is contained in the interior of the union of the tiles of $\Pthree$ having $p$ as an ideal endpoint. The assertion of the final sentence of the Lemma's first paragraph follows.

Since this is also true for any other horoball $B'$, $B$ may intersect $B'$ only if there exists $P\in\Pthree$ having both horoballs' centers as ideal vertices, proving the claim.

The fact that each $\mathcal{H}_P$ is a maximal packing now implies that $\mathcal{H}$ is as well. In particular, from the second claim above and the explicit descriptions in \Cref{bort}, we find that distinct horoballs $B$ and $B'$ of $\mathcal{H}$ intersect if and only if they are centered at ideal points of an edge of some $P\in\Pthree$; and if so, that they are tangent at a single point of this edge. The edge set of $\Pthree$ is therefore characterized strictly in terms of $\mathcal{H}$ as the set of geodesics joining ideal points of distinct members that have a point of tangency. 

We similarly note that each triangular face of $\Pthree$ has a set of three distinct, pairwise-tangent horoballs centered at its ideal points (again this can be seen directly from the descriptions of \Cref{bort}). Conversely, for any such triple $\{B_1,B_2,B_3\}$, we claim that the set of centers of the $B_i$ is the set of ideal points of a face of $\Pthree$. To see this, translate $B_3$ to $B_{\infty}$, whereupon $B_1$ and $B_2$ are taken to Euclidean balls of radius $1/2$ whose centers are at distance $1$ due to their tangency. Let $e$ be the edge of $\Pthree$ joining the center $p$ of $B_1$ to $\infty$ and let $P_1,\hdots,P_k$ be the set of tiles of $\Pthree$ containing $e$, enumerated in cyclic order so that for each $i>1$, $P_i\cap P_{i-1}$ is a face $f_i$ containing $e$, and similarly $P_1\cap P_k$ is a face $f_1$. For each such $i$, $f_i$ has one ideal point at $\infty$, one at $p$ (the center of $B_1$), and a third---call it $p_i$---at distance $1$ from $p$ on $\mathbb{C}$. The claim is equivalent to asserting that the center $p'$ of $B_2$ is one such $p_i$.

As we observed above, $p'$ is an endpoint of an edge $e'$ of $\Pthree$ with its other endpoint at $\infty$. The point $e'\cap\partial B_{\infty}$ is a vertex of a tiling $\mathcal{T}$ of $\partial B_{\infty}$ by equilateral triangles and squares of sidelength $1$ that is obtained by intersecting it with the tiles of $\Pthree$ that have an ideal vertex at $\infty$. Each vertex of this induced tiling of $\partial B_{\infty} = \mathbb{C}\times\{1\}$ lies over an ideal endpoint of an edge of $\Pthree$ with its other endpoint at $\infty$; in particular, there are vertices over $p$ and each of the $p_i$ described above. For any such $i$, if $P_i\cap\partial B_{\infty}$ is a square (ie.~if $P_i$ is an octahedron) then its interior contains the entire open ninety-degree arc of the circle of radius $1$ centered at the vertex above $p$ that is bounded by those above $p_i$ and $p_{i+1}$. If $T := P_i\cap\partial B_{\infty}$ is a triangle then the corresponding open, sixty-degree circle arc is contained in the interior of the tile of $\mathcal{T}$ that intersects $T$ along its edge containing the vertices over $p_i$ and $p_{i+1}$. Thus in neither case does the open arc contain any vertex of $\mathcal{T}$, and it follows that the vertices above the $p_i$ are the only ones at distance one from the vertex over $p$. This implies the claim.

We have showed that the edge set of $\Pthree$ is the collection of geodesics joining the centers of pairs of tangent horoballs of $\mathcal{H}$, and the face set is the collection of ideal triangles determined by triples of pairwise-tangent horoballs. Therefore $\Pthree$ is determined by $\mathcal{H}$, since each of its cells is the closure of a complementary region to the union of the faces. 
\end{proof}

For our next result, recall that the \textit{commensurator} of a discrete group of isometries $\Gamma$ of $\Hthree$ is
\[ \mathrm{Comm}(\Gamma) = \{\gamma\in\mathrm{Isom}(\Hthree)\,|\, [\Gamma:(\gamma\Gamma\gamma^{-1})\cap\Gamma] <\infty\} \]
See eg.~\cite[p.~269]{book}. 

\begin{prop}\label{commensurator} Suppose $M = \Hthree/\Gamma$ is a one-cusped mixed-platonic manifold. The commensurator of $\Gamma$ preserves the $\Gamma$-invariant horoball packing $\mathcal{H}$ supplied by \Cref{packing}, hence also the associated $\Gamma$-invariant tiling $\mathcal{P}$ by regular ideal tetrahedra and octahedra.\end{prop}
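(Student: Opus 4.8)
The plan is to use non-arithmeticity to force the commensurator to be discrete, and then to exploit the rigidity of the polyhedral decomposition. Since $M$ is mixed-platonic, \Cref{non arithmetic} shows that $\Gamma$ is not arithmetic, so by Margulis' theorem $\Gamma^{+} := \mathrm{Comm}(\Gamma)$ is discrete; hence $\Gamma^{+}$ is a lattice containing $\Gamma$ with finite index, $\mathcal{C} := \Hthree/\Gamma^{+}$ is a finite-volume hyperbolic orbifold, and $M \to \mathcal{C}$ is a finite orbifold cover. As $M$ is one-cusped and every cusp of $\mathcal{C}$ is covered by a cusp of $M$, the orbifold $\mathcal{C}$ is also one-cusped. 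I would also record that $\Gamma$ and $\Gamma^{+}$ have the same set of parabolic fixed points on $\partial\Hthree$, and that — because $M$ is one-cusped and $\Pthree$ is an ideal cell decomposition of the finite-volume manifold $M$ — this common set is a single $\Gamma$-orbit, namely the set of ideal vertices of the tiles of $\Pthree$.

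Next I would reduce the statement to showing that $\Gamma^{+}$ carries $\Pthree$ to itself. By \Cref{packing} one has $\mathcal{H} = \bigcup_{P \in \Pthree} \mathcal{H}_P$, so $\mathcal{H}$ is determined by $\Pthree$, and conversely $\Pthree$ is recovered from $\mathcal{H}$ (the last paragraph of \Cref{packing}); thus $\Gamma^{+}$-invariance of either one is equivalent to that of the other. It is useful to note, via standard position and \Cref{bort}, that $\mathcal{H}$ is exactly the full preimage in $\Hthree$ of the \emph{maximal} embedded horoball cusp neighborhood of $M$: in standard position $B_{\infty} = \{(z,t) : t \geq 1\}$ lies in $\mathcal{H}$, the horoball of $\mathcal{H}$ centered at $0$ is a Euclidean ball of radius $1/2$ tangent to $B_{\infty}$, and since $M$ is one-cusped that horoball equals $g(B_{\infty})$ for some $g \in \Gamma$ with $g(\infty) = 0$; so $\mathcal{H}$ cannot be uniformly enlarged without ceasing to be a packing. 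In particular $\mathcal{H}$ depends only on $M$.

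To obtain $\Gamma^{+}$-invariance of $\Pthree$, I would apply the Epstein--Penner construction to $\mathcal{C}$. A horoball neighborhood of the unique cusp of $\mathcal{C}$ gives a $\Gamma^{+}$-invariant family of null vectors in $\mathbb{R}^{3,1}$ and hence a $\Gamma^{+}$-invariant canonical cell decomposition $\mathcal{D}$ of $\Hthree$; pulling this neighborhood back along $M \to \mathcal{C}$ produces a horoball neighborhood of the cusp of $M$ with the same defining family of null vectors, so $\mathcal{D}$, viewed as a $\Gamma$-invariant decomposition, is the Epstein--Penner decomposition of $M$ (the weight being immaterial since $M$ has a single cusp). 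By \Cref{that's amore}, $\Pthree$ is itself the Epstein--Penner decomposition of $M$, so $\Pthree = \mathcal{D}$ is $\Gamma^{+}$-invariant, and therefore so is $\mathcal{H}$.

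I expect the crux to be precisely this last input. Stated directly, $\Gamma^{+}$ preserves $\mathcal{H}$ if and only if the image of the maximal cusp of $M$ in $\mathcal{C}$ is embedded, equivalently if and only if the $\Gamma^{+}$-saturation $\bigcup_{g \in \Gamma^{+}} g(\mathcal{H})$ is a horoball packing — in which case the maximality in \Cref{packing} at once forces it to equal $\mathcal{H}$. Since the maximal cusp of a finite cover need not descend to the maximal cusp of the base, the mixed-platonic structure genuinely has to be used, and the role of \Cref{that's amore} (or of whatever self-contained substitute one proves first) is exactly to rigidify the decomposition enough that the commensurator respects it; everything else in the argument is formal.
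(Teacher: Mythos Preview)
Your argument is correct, but it takes a different route from the paper's. The paper's proof is more elementary: having established that $\mathrm{Comm}(\Gamma)$ is discrete (via \Cref{non arithmetic} and Margulis), it simply chooses \emph{some} $\mathrm{Comm}(\Gamma)$-invariant horoball packing, observes that this packing is in particular $\Gamma$-invariant, and then uses the fact that a one-cusped manifold has a unique $\Gamma$-invariant horoball packing up to equivariant rescaling. Hence $\mathcal{H}$ is a rescaling of a $\mathrm{Comm}(\Gamma)$-invariant packing, and is therefore itself $\mathrm{Comm}(\Gamma)$-invariant; invariance of $\Pthree$ then follows from the final assertion of \Cref{packing}. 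No appeal to the Epstein--Penner construction or to \Cref{that's amore} is needed.

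Your approach instead invokes \Cref{that's amore} to identify $\Pthree$ with the canonical cell decomposition, which is automatically $\mathrm{Comm}(\Gamma)$-invariant since it depends only on the commensurability class in the one-cusped setting. This is logically sound (there is no circularity: \Cref{that's amore} depends only on the Voronoi analysis of Section~\ref{Voronoi sec}, not on \Cref{commensurator}), and it has the virtue of making explicit the connection with the canonical decomposition that the paper records separately in \Cref{canon}. The cost is a forward reference to a result whose proof requires more machinery than the problem at hand actually needs; the paper's argument is self-contained at this point and uses nothing beyond the one-cusped uniqueness of horospherical cross-sections up to scale.
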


\begin{remark} Thus in this case, $\mathcal{P}$ is the \emph{truly canonical cell decomposition} of Goodman-Heard-Hodgson \cite{GoodmanHeardHodgson}; cf.~\Cref{canon}.
\end{remark}

\begin{proof} In this proof it is useful to keep in mind that there is a correspondence between $\Gamma$-invariant horoball packings and horospherical cross-sections of $\Hthree/\Gamma$: the former project to the latter, and the latter lift to the former, under the universal covering map $\Hthree\to\Hthree/\Gamma$.

By \Cref{non arithmetic}, $M$ is non-arithmetic. Therefore by Margulis's arithmeticity theorem (see \cite[Theorem.~10.3.5]{book}), $\mathrm{Comm}(\Gamma)$ is discrete and a finite extension of $\Gamma$. We can therefore find a $\mathrm{Comm}(\Gamma)$-invariant horoball packing of $\Hthree$, which is thus also $\Gamma$-invariant. But since $M$ has only one cusp, there is only one $\Gamma$-invariant horoball packing up to $\Gamma$-equivariant rescaling (equivalently, any two horospherical cusp cross-sections are rescalings). 

It thus follows that $\mathcal{H}$ is a rescaling of the original $\mathrm{Comm}(\Gamma)$-invariant packing and hence is itself $\mathrm{Comm}(\Gamma)$-invariant. The fact that $\mathcal{P}$ is also $\mathrm{Comm}(\Gamma)$-invariant now follows from the final assertion of \Cref{packing}, since it is determined by $\mathcal{H}$.
\end{proof}

\begin{example}\label{v2774} The one-cusped manifold $v2774$ from SnapPy's orientable cusped census shares a double cover with the mixed-platonic manifold $s913$ considered in \Cref{s913}. Importantly, the Epstein-Penner decomposition for $v2774$ is a \emph{triangulation}\footnote{This triangulation has isomorphism signature {\tt ovLLwzQAPQcdgejighlkmnmnnmoafaofoqfhahxjo}.}, so by \Cref{commensurator}, $v2774$ is not mixed-platonic. 

We consider the consequences of this observation for the orientable commensurator quotient $O_0$ covered by $s913$ and $v2774$. (Note that since $s913$ is mixed-platonic, by \Cref{non arithmetic} it is not arithmetic and the commensurator quotient is an orbifold.) As described by \cite{CoulsonGoodmanHodgsonNeumann}, $O_0$ is the quotient of $\mathbb{H}^3$ by the symmetry group of the horoball packing for $v2274$, which is \emph{not} consistent with the horoball packing for $s913$ associated to the decomposition into regular ideal octahedra and tetrahedra. Therefore $O_0$ is also not mixed-platonic, since it has a unique horoball packing up to rescaling.
\end{example}

\Cref{v2774} shows that there are mixed-platonic manifolds that cover orbifolds which are not mixed-platonic, so \Cref{commensurator} does not generalize to the multi-cusped case. Equivalently, per \cite{CoulsonGoodmanHodgsonNeumann}, the horoball packing corresponding to a mixed-platonic manifold is not necessarily the maximally symmetric horoball packing. 

More can be said when further restricting to knot complements among one-cusped manifolds. The following proposition uses the properties of the ring $\ZZtwelve$ to control the possible meridians of knot complements in standard position.

\begin{prop}\label{neilolarry} Suppose $M = \Hthree/\Gamma$ is a mixed-platonic knot complement.
Any peripheral element of $\Gamma$ representing a meridian of $M$ has translation length $2+\sqrt{3}$ or $\sqrt{2+\sqrt{3}}$ on the horoball of $\mathcal{H}$ that it stabilizes, where $\mathcal{H}$ is the horoball packing supplied by \Cref{packing}.
\end{prop}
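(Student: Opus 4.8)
The plan is to combine the arithmetic constraint $\Gamma < \operatorname{PSL}_2(\ZZtwelve)$ from \Cref{glop} with the structure of the horoball packing $\mathcal{H}$ from \Cref{packing}. First I would place $M = \Hthree/\Gamma$ in standard position, so that the peripheral subgroup $\Gamma_\infty$ stabilizing the cusp fixes $\infty$ and preserves the horoball $B_\infty = \{(z,t) : t \ge 1\}$ from (\ref{my son is also named bort}). Then $\Gamma_\infty$ acts on $\partial B_\infty \cong \mathbb C$ by Euclidean isometries, and since $\Gamma < \operatorname{PSL}_2(\ZZtwelve)$, every parabolic fixing $\infty$ has the form $z \mapsto z + \lambda$ with $\lambda \in \ZZtwelve$; its translation length on $\partial B_\infty$ is exactly $|\lambda|$. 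Because $M$ is a knot complement, $H_1(M;\ZZ) \cong \ZZ$, which forces $\Gamma_\infty \cong \ZZ^2$ to be generated by a meridian $\mu$ and longitude $\ell$ with $\mu$ a generator of $H_1$; the key homological input is that $\ell$ lies in the commutator subgroup, or more usefully that the peripheral lattice $L = \{\lambda \in \ZZtwelve : z \mapsto z+\lambda \in \Gamma_\infty\}$ is a rank-two $\ZZ$-submodule of $\ZZtwelve$ and the meridian corresponds to a primitive vector $\mu$ in $L$ that maps onto the generator of $H_1(M)\cong \ZZ = L/[\text{image of longitude}]$.

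Next I would bring in the peripheral tiling: the tiles of $\Pthree$ incident to $\infty$ cut $\partial B_\infty$ into a tiling $\Ttwo$ by unit equilateral triangles and unit squares, and $L$ must be a lattice of translational symmetries of $\Ttwo$. This is a strong restriction — such a tiling has a symmetry lattice that is itself generated by vectors of controlled length — and combined with the fact that $\mu$ is a primitive vector of $L$, it pins down $|\mu|$ to a short list. The arithmetic refinement is where $\QQtwelve$ enters decisively: writing $\mu = a + b\omega + ci + di\omega$ with $a,b,c,d \in \ZZ$ and computing $|\mu|^2 = \mu\bar\mu$ using $\omega\bar\omega = 1$, $\operatorname{Re}(\omega) = 1/2$, etc., one gets $|\mu|^2$ as an element of $\ZZ[\sqrt 3]$ of a specific form; the example in \Cref{ex: Boyd knot} with $\mu = -\bar\omega + i$ giving $|\mu|^2 = 2+\sqrt 3$ is the model case. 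I would then argue that the meridian, being primitive and forced by the tiling to be an edge-vector or short diagonal of the $\Ttwo$-symmetry lattice, has $|\mu|^2 \in \{(2+\sqrt3)^2,\ 2+\sqrt3\}$, i.e.\ translation length $2+\sqrt3$ or $\sqrt{2+\sqrt3}$.

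The main obstacle I anticipate is controlling exactly which primitive vectors of $L$ can be the meridian: a priori the meridian could be any primitive element of the symmetry lattice of $\Ttwo$, and there are infinitely many such, so the translation length is not bounded by tiling geometry alone. The real leverage must come from the homological/geometric constraint that $\mu$ bounds a disk in $S^3$ together with the arithmetic of $\ZZtwelve$ — presumably via a Dehn-surgery or covering-space argument showing that if $|\mu|$ were larger then filling $\mu$ would not produce $S^3$ (e.g.\ the core geodesic would be too short, or an orbifold-covering count would fail). I would structure the proof so that the tiling analysis reduces the candidate meridians to a one-parameter family of lattice vectors, and then a norm computation in $\ZZtwelve$ (using that the only short vectors of the relevant norm form in $\ZZ[\sqrt3]$ are $1$, $2+\sqrt3$, and their associates) together with primitivity eliminates all but the two asserted values.
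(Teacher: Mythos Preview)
Your setup is right---standard position, $\Gamma < \operatorname{PSL}_2(\ZZtwelve)$, translation length equals $|\mu|$ for the meridian translation $z\mapsto z+\mu$---but the decisive step is missing. The peripheral tiling does not control $|\mu|$: the peripheral lattice $L$ is only a \emph{sublattice} of the tiling's translation group, and primitive vectors of a sublattice can be arbitrarily long, so no amount of tiling combinatorics alone will cut the candidate meridians down to a finite list. Your proposed Dehn-surgery bound is the right kind of input for an \emph{upper} bound (indeed the Six Theorem gives $|\mu|\le 6$), but that still leaves a continuum of values in $\ZZtwelve$ to rule out, not a one-parameter family.

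The paper's key idea, which you do not have, is that $\mu$ must be a \emph{unit} of $\ZZtwelve$. The argument is: if $\mu$ lay in some prime $\mathcal P$, reduction mod $\mathcal P$ would send the meridian to the identity in the finite group $\operatorname{PSL}_2(\ZZtwelve/\mathcal P)$; since the knot group is normally generated by the meridian, all of $\Gamma$ would map trivially. But $\Gamma$ contains an element $g$ conjugating the parabolic at $\infty$ to one at $0$, forcing the top-left entry of $g$ to vanish, so $g$ cannot reduce to the identity---a contradiction. Once $\mu$ is a unit, the unit group of $\ZZtwelve$ is $\mu_{12}\times\langle \omega+i\rangle$, so $|\mu|\in\{(2+\sqrt3)^{k/2}:k\in\ZZ\}$; then Adams' waist-size theorem (excluding $|\mu|=1$, which is only the figure-eight, hence arithmetic) and the Six Theorem leave exactly $|\mu|=\sqrt{2+\sqrt3}$ or $2+\sqrt3$.
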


We note that the argument given below in \Cref{unit_claim} is a version of one in \cite[Proof of Lemma 3.6]{ReidWalsh} (see also \cite[Lemma 3.2]{Neil_SmallKnots}). We include a full proof here for the convenience of the reader. 

\begin{proof}
In the proof we will assume that $\Gamma$ is represented in standard position, and that the peripheral element in question belongs to the subgroup $\Lambda$ of $\Gamma$ stabilizing $\infty$. It thus has the form $\mu_\infty=\left(\begin{smallmatrix} 1 & \mI \\ 0 & 1 \end{smallmatrix}\right)$, where $\mI$ is an element of  $\ZZtwelve$ by \Cref{glop}.

\begin{claim}\label{unit_claim} $\mI$ is a unit of $\ZZtwelve$.
\end{claim}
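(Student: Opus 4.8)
The plan is to play two facts off against each other. First, because $M$ is a knot complement in $S^3$, Dehn filling $M$ along the meridian recovers $S^3$, so $\Gamma/\langle\langle\mu_\infty\rangle\rangle\cong\pi_1(S^3)$ is trivial; hence $\mu_\infty$ normally generates $\Gamma$. Second, I claim that $\Gamma$ contains an element whose lower-left entry---in a lift to $\operatorname{SL}_2(\ZZtwelve)$, using the realization $\Gamma<\operatorname{PSL}_2(\ZZtwelve)$ of \Cref{glop}---is a unit of $\ZZtwelve$. Granting this claim, suppose $\mI$ is not a unit. Since $\mu_\infty$ is a nontrivial parabolic, $\mI\ne 0$, so the proper ideal $(\mI)$ is contained in some maximal ideal $\mathfrak{p}$, and reduction modulo $\mathfrak{p}$ gives a homomorphism $r\colon\operatorname{PSL}_2(\ZZtwelve)\to\operatorname{PSL}_2(\ZZtwelve/\mathfrak{p})$ with $r(\mu_\infty)=1$, as $\mI\in\mathfrak{p}$. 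Because $\mu_\infty$ normally generates $\Gamma$, the image $r(\Gamma)$ is trivial, so every element of $\Gamma$ reduces to the identity and in particular has lower-left entry in $\mathfrak{p}$---contradicting the claim. So it remains to establish the claim.

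To produce the required element I would use that $M$ is one-cusped together with \Cref{packing}. By that lemma the horoballs of $\mathcal{H}$ are in $\Gamma$-equivariant bijection with the ideal vertices of $\Pthree$; these vertices all represent the single cusp of $M$, hence form a single $\Gamma$-orbit, so $\Gamma$ acts transitively on $\mathcal{H}$. With $\Gamma$ in standard position, \Cref{packing} places the height-one horoball $B_\infty$ of (\ref{my son is also named bort}) in $\mathcal{H}$, while \Cref{bort} places in $\mathcal{H}$ the horoball $B_0$ centered at $0$ of Euclidean diameter $1$. By transitivity choose $\gamma\in\Gamma$ with $\gamma(B_\infty)=B_0$; then $\gamma(\infty)=0\ne\infty$, so any lift $\left(\begin{smallmatrix}a&b\\c&d\end{smallmatrix}\right)\in\operatorname{SL}_2(\ZZtwelve)$ has $c\ne 0$, and the associated fractional linear transformation sends $\{t\ge 1\}$ to a horoball at $a/c=0$ of Euclidean diameter $1/(c\bar c)$. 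Equating this with $1$ gives $c\bar c=1$; since $\ZZtwelve$ is stable under complex conjugation, $\bar c\in\ZZtwelve$ is an inverse for $c$, so $c$ is a unit, as claimed.

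The one step that needs genuine care---the nearest thing to an obstacle---is the transitivity of $\Gamma$ on $\mathcal{H}$, which rests on the precise content of \Cref{packing} (that $\mathcal{H}$ has exactly one horoball at each ideal vertex of $\Pthree$, and none elsewhere) together with the fact that a knot complement has a single cusp, so that all those ideal vertices lie in one $\Gamma$-orbit. The remaining ingredients are routine: the effect of a fractional linear map on horoball diameters, the explicit radii recorded in \Cref{bort}, and the elementary observation that an algebraic integer $c$ with $c\bar c=1$ is a unit.
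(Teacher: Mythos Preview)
Your proof is correct and follows essentially the same approach as the paper: both argue by contradiction via reduction modulo a prime containing $\mI$, use that the meridian normally generates $\Gamma$, and exhibit an element of $\Gamma$ sending $\infty$ to $0$ that cannot reduce to the identity. The only cosmetic difference is that the paper observes directly that such an element has upper-left entry $a=0$ (hence cannot reduce to $\pm I$), whereas you carry out the horoball-diameter computation to show its lower-left entry $c$ satisfies $c\bar c=1$ and is therefore a unit---either observation yields the contradiction.
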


\begin{proof}[Proof of \Cref{unit_claim}]
Suppose not; then there is a prime $\mathcal{P}$ of $\ZZtwelve$ such that $\mI\in\mathcal{P}$. Reducing entries modulo this prime gives a homomorphism from $\operatorname{PSL}_2(\ZZtwelve)$ to the finite group $\operatorname{PSL}_2(\ZZtwelve/\mathcal{P})$ with a meridian $\gamma$ in its kernel.  Since $\Gamma$ is normally generated by the meridian $\gamma$, it lies entirely in the kernel of this homomorphism if it exists. 

However, as $\Gamma$ is in standard position, there is also a parabolic fixed point at $0$, and thus a meridian fixing $0$ which is conjugate to $\mu_\infty$ by an element $g = \left(\begin{smallmatrix} a & b \\ c & d \end{smallmatrix}\right)$:

that is $$\left(\begin{smallmatrix} 1 & 0 \\ \mZero & 1 \end{smallmatrix}\right)=\left(\begin{smallmatrix} a & b \\ c & d \end{smallmatrix}\right) \left(\begin{smallmatrix} 1& \mI \\ 0 & 1 \end{smallmatrix}\right)\left(\begin{smallmatrix} d & -b \\ -c & a \end{smallmatrix}\right) .$$ 

We have that $g(\infty) = 0$ so $a=0$. Thus, $g$ cannot be trivial in $\operatorname{PSL}_2(\ZZtwelve/\mathcal{P})$, a contradiction.
\end{proof}

By \Cref{packing}, since $\Gamma$ is represented in standard position the horoball $B_{\infty}$ of $\mathcal{H}$ stabilized by $\Lambda$ is at height $1$. Therefore the translation length of $\left(\begin{smallmatrix} 1 & \mI \\ 0 & 1 \end{smallmatrix}\right)$ on $\partial B_{\infty}$ is the complex modulus $|\mI|$ of $\mI$; this equals the  ``waist size'' of the cusp of $M$ as defined by Adams in \cite{Adams_WaistSize}.

We now appeal to results of \cite{Adams_WaistSize}. Lemma 2.5 there asserts that any cusp of any hyperbolic $3$-manifold has waist size at least $1$, and Theorem 3.1 of \cite{Adams_WaistSize} asserts that the figure-eight knot complement is the unique hyperbolic $3$-manifold having a cusp with waist size equal to $1$. Since the figure-eight knot complement is arithmetic, we obtain that $|\mu|>1$ by \Cref{non arithmetic}.

Finally, the Six Theorem \cite{Lackenby_sixes} and \cite{AgolSix} implies that $|\mI|\le 6$. Since $\mathbb{Q}(\zeta_{12})$ has 2 complex places, the unit group in $\ZZtwelve$ is of the form $\mathbb{Z}/12\mathbb{Z} \times \mathbb{Z}$ with a fundamental unit of $\omega+i$ (see \cite[Chapter 5]{MarcusBook} for example). Hence, we then find that the only possibilities are that  $\mI = \omega + i$ or $\mI = 2+\sqrt{3}$, up to multiplication by a power of $\zeta_{12}$. These have the moduli listed in the Proposition statement.
\end{proof}

\begin{remark}
As a result of the proof above, we can say more about the off-diagonal entry of a meridian fixing $\infty$. In particular, $\mI$ is a unit of $\ZZtwelve$ of the form $\zeta_{12}^k(\omega+i)$ or $\zeta_{12}^k(2+\sqrt{3})$ where $\zeta_{12}$ is a primitive $12$th root of unity and $\omega =\frac{1}{2} + \frac{\sqrt{-3}}{2}$.
\end{remark}

\begin{prop}\label{prop_no_omega_plus_i}
Suppose $N = \Hthree/\Gamma$ is a mixed-platonic orbifold, let $c$ be a parabolic fixed point of $\Gamma$ and let $\Lambda$ be the subgroup of $\Gamma$ stabilizing $c$, and let $\mathcal{H}$ be the horoball packing supplied by \Cref{packing}. If $\Lambda$ has 3-torsion, then the minimal translation length in the horoball $H_c$ based at $c$ in $\Lambda$ cannot equal $\sqrt{2+\sqrt{3}}$.
\end{prop}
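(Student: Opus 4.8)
The plan is to show that $3$-torsion in $\Lambda$ forces the cusp's translation lattice to be hexagonal, and then to exclude $\sqrt{2+\sqrt3}$ as the length of its shortest vector by an elementary area comparison with the tiles of the peripheral tiling. After conjugating by an orientation-preserving isometry, as in the remark following \Cref{non arithmetic}, I would arrange that $c=\infty$ and that $\Gamma$ is represented in standard position, so that by \Cref{packing} the horoball $H_c$ equals $B_\infty=\{(z,t):t\ge 1\}$ and $\partial H_c$ is identified with $\mathbb{C}$ at height $1$. Let $L\subset\mathbb{C}$ be the lattice of $\tau$ with $\left(\begin{smallmatrix}1&\tau\\0&1\end{smallmatrix}\right)\in\Lambda$; since $\Hthree/\Gamma$ has finite volume $L$ has rank two, and the minimal translation length in $\Lambda$ is the length of a shortest nonzero vector of $L$.

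Next I would use the torsion. An order-three element $r$ of $\Lambda$ fixes $\infty$, so it is represented by an upper-triangular matrix, and being elliptic of order three it has (after absorbing a sign) the form $r=\left(\begin{smallmatrix}\zeta_6&\beta\\0&\zeta_6^{-1}\end{smallmatrix}\right)$ for some $\beta\in\mathbb{C}$, or the same with $\zeta_6^{-1}$ in the upper-left. A one-line matrix computation then gives $r\left(\begin{smallmatrix}1&\tau\\0&1\end{smallmatrix}\right)r^{-1}=\left(\begin{smallmatrix}1&\zeta_3\tau\\0&1\end{smallmatrix}\right)$, so $L$ is invariant under multiplication by $\zeta_3$. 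Hence $L$ is a torsion-free rank-one module over the principal ideal domain $\mathbb{Z}[\zeta_3]$, i.e.\ $L=\delta\,\mathbb{Z}[\zeta_3]$ for some $\delta\in\mathbb{C}^{\times}$. The shortest nonzero vectors of $\mathbb{Z}[\zeta_3]$ are its six units, of length $1$, so the minimal translation length equals $|\delta|$ while the covolume of $L$ equals $\tfrac{\sqrt3}{2}\,|\delta|^2$.

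Finally I would count tiles. By \Cref{packing} the tiles of $\mathcal{P}$ incident to $\infty$ cut $\partial H_c=\mathbb{C}$ into a tiling $\mathcal{T}$ by equilateral triangles and squares of sidelength $1$, and this tiling is $\Lambda$-invariant, hence $L$-invariant. The quotient $\mathcal{T}/L$ is a flat torus of area $\tfrac{\sqrt3}{2}\,|\delta|^2$ that is tiled by, say, $a$ triangles of area $\tfrac{\sqrt3}{4}$ and $b$ squares of area $1$, for integers $a,b\ge 0$, so
\[ \tfrac{\sqrt3}{2}\,|\delta|^2 \;=\; \tfrac{\sqrt3}{4}\,a + b . \]
If the minimal translation length were $\sqrt{2+\sqrt3}$ then $|\delta|^2=2+\sqrt3$, so the left-hand side is $\sqrt3+\tfrac32$; comparing coefficients in the $\mathbb{Q}$-basis $\{1,\sqrt3\}$ of $\mathbb{Q}(\sqrt3)$ forces $b=\tfrac32$, which is impossible, and the proposition follows.

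The main work beyond the area count lies in the supporting geometry of the second step: verifying that $\Lambda$ contributes a genuine rank-two translation lattice, that an order-three peripheral element has linear part a primitive cube root of unity (so that conjugation really scales $L$ by $\zeta_3$), and that the quantity ``minimal translation length in $\Lambda$'' is indeed the systole $|\delta|$ of $L$ (straightforward when $\Lambda$ is orientation-preserving, as it is in the intended applications); one also invokes \Cref{packing} for the structure of $\mathcal{T}$ so that the tile counts on $\mathcal{T}/L$ make sense. Everything after identifying $L=\delta\,\mathbb{Z}[\zeta_3]$ is the short rationality argument above; in particular the proof uses neither $\operatorname{PSL}_2(\ZZtwelve)$ nor \Cref{neilolarry}, though one could alternatively argue arithmetically by noting that $L\subset\ZZtwelve$ is $\zeta_3$-stable and that $|\delta|^2=2+\sqrt3$ would make $\delta$ a unit associate of $\omega+i$, contradicting the same area count.
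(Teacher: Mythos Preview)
Your argument is correct and is essentially the same as the paper's: both identify that $3$-torsion forces the translation lattice to be hexagonal with covolume $\tfrac{\sqrt3}{2}\ell^2$, then compare this area with the integral combination $\tfrac{\sqrt3}{4}a+b$ coming from the peripheral tiling by unit triangles and squares, and obtain the contradiction $b=\tfrac32$ when $\ell^2=2+\sqrt3$. Your derivation of $L=\delta\,\mathbb{Z}[\zeta_3]$ via the $\mathbb{Z}[\zeta_3]$-module structure is a bit more explicit than the paper's, which simply asserts that the maximal abelian subgroup is generated by two translations of the same length $\ell$, but the substance is identical.
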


\begin{proof}
Let $\Lambda_0 \subset \Lambda$ be the maximal abelian subgroup of $\Lambda$ and assume that $\Lambda_0$ is generated by two translations of length $\ell$. Then $\Lambda_0$ stabilizes a fundamental domain $D$ tessellated by $n_1>1$ equilateral triangles and $n_2 >1$ squares  of side length $1$ in $\partial H_c$. We stress that since $\Lambda_0$ is acting on tessellation of the plane of this form, each translation in $\Lambda_0$ translates triangles to triangles and squares to squares. Considering this decomposition, the area is of the form $n_1\frac{\sqrt{3}}{4}+n_2$, where $n_1$ and $n_2$ are integers. On the other hand, the area of $D$ is $\frac{\ell^2\sqrt{3}}{2}$. If $\ell=\sqrt{2+\sqrt{3}}$, then this area is $\sqrt{3}+\frac{3}{2}$. Since $\frac{3}{2} \not\in \mathbb{Z}$, there is no valid choice of for $n_1$ and $n_2$.
\end{proof}

The following lemma is a summary of \cite{AdamsTop90} with extra details that are relevant to our discussion here.

\begin{lemma}\label{lem_cusp_volume}
Let $N = \Hthree/\Gamma$ be a mixed-platonic orbifold with a single rigid cusp. Then the cusp volume of $N$ is at least
\begin{enumerate}
\item $\sqrt{3}/8$ if $N$ has a $S^2(2,3,6)$ cusp,
\item $\sqrt{3}/4$ if $N$ has a $S^2(3,3,3)$ cusp, and
\item $1/4$ if $N$ has a $S^2(2,4,4)$ cusp.
\end{enumerate}
\end{lemma}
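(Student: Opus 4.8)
My plan is to read ``single rigid cusp'' as ``exactly one cusp, whose cross-section is one of the rigid Euclidean orbifolds $S^2(2,3,6)$, $S^2(3,3,3)$, $S^2(2,4,4)$'' (the case that arises in the application), and to recover the three bounds not from the general horoball-packing argument of \cite{AdamsTop90} but from the explicit cusp cross-section supplied by the mixed-platonic structure. The one input I would borrow from \cite{AdamsTop90} is the normalization that the cusp volume of $N$ equals $\tfrac12$ the area of the maximal horospherical cross-section of its cusp; so it suffices to identify that cross-section and bound its area from below.

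First I would check that for a one-cusped mixed-platonic orbifold the maximal cusp cross-section is exactly the one coming from \Cref{packing}. Represent $N$ in standard position; with $\mathcal{H}$, $\Pthree$ as in \Cref{packing}, the horoball $B_\infty$ sits at height $1$ and is precisely invariant under $\Lambda := \operatorname{Stab}_\Gamma(\infty)$ since $\mathcal{H}$ is a packing. Because $N$ is one-cusped, every ideal vertex of every tile of $\Pthree$ is $\Gamma$-equivalent to $\infty$; so for a finite ``full-size'' vertex $v$ of the tile in standard position there is $g\in\Gamma$ with $g(\infty)=v$, and then $g(B_\infty)$ is the diameter-$1$ horoball of $\mathcal{H}$ centred at $v$, which is tangent to $B_\infty$ by \Cref{bort}. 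Hence $B_\infty$ already touches a $\Gamma$-translate of itself, so height $1$ is the maximal scale and the cross-section in question is $\Sigma := (\mathbb{C}\times\{1\})/\Lambda$.

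Next I would record, as in the proof of \Cref{packing}, that intersecting the tiles of $\Pthree$ incident to $\infty$ with $\mathbb{C}\times\{1\}$ produces a $\Lambda$-invariant tiling of the plane by unit equilateral triangles (from tetrahedra) and unit squares (from octahedra), and that both shapes genuinely occur: since $N$ is mixed-platonic and one-cusped, some tetrahedron and some octahedron each has an ideal vertex equivalent to $\infty$. Writing $\Lambda_0<\Lambda$ for the translation subgroup, $n=|\Lambda/\Lambda_0|\in\{6,3,4\}$ for the order of the point group in the three respective cases, and $n_1,n_2\ge 1$ for the numbers of triangles and squares in a fundamental domain for $\Lambda_0$, one gets $\operatorname{area}(\Sigma)=\tfrac1n\bigl(\tfrac{\sqrt3}{4}n_1+n_2\bigr)$, so the cusp volume is $\tfrac1{2n}\bigl(\tfrac{\sqrt3}{4}n_1+n_2\bigr)$. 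The crux is then a counting observation: the $\Lambda$-stabilizer of a tile is cyclic, generated by a rotation about the tile's centre, hence lies both in the rotational symmetry group of the tile ($C_3$ for a triangle, $C_4$ for a square) and in the $\Lambda$-stabilizer of a point of $\mathbb{C}$, which is cyclic of an order occurring as a rotation order of $\Lambda$ (the available orders being $\{6,3,2\}$, $\{3\}$, $\{4,2\}$ for $p6$, $p3$, $p4$). Comparing, for the $S^2(3,3,3)$ and $S^2(2,3,6)$ cusps every square has $\Lambda$-stabilizer of order at most $2$, so $n_2\ge n/2=3$; for the $S^2(2,4,4)$ cusp every triangle has trivial $\Lambda$-stabilizer, so $n_1\ge n=4$. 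Substituting, the cusp volume is at least $\tfrac{n_2}{2n}\ge\tfrac12$ when the cusp is $S^2(3,3,3)$, at least $\tfrac{n_2}{12}\ge\tfrac14$ when it is $S^2(2,3,6)$, and at least $\tfrac18\bigl(\tfrac{\sqrt3}{4}n_1+n_2\bigr)\ge\tfrac18(\sqrt3+1)$ when it is $S^2(2,4,4)$; each of these exceeds the asserted value.

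The part I expect to require the most care is the $S^2(2,4,4)$ case of the counting step: unlike a triangle, a square \emph{can} be centred at an order-$4$ rotation point, so counting squares yields nothing and one is forced to count triangles instead — and hence to use that a triangle is genuinely present, which is exactly where mixed-platonicity re-enters. I would also want to confirm the normalization convention for ``cusp volume'' in \cite{AdamsTop90} so that the factor of $\tfrac12$ is right, and to double-check the second paragraph, i.e.\ that in the one-cusped case the maximal cross-section is not a proper rescaling of the \Cref{packing} one; the tangency argument there is meant to settle this.
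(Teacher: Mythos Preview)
Your argument is correct, and it is genuinely different from the paper's. The paper does not use the peripheral tiling at all here: its ``proof sketch'' invokes Adams' classification \cite{AdamsTop90} of rigid-cusped orbifolds of small cusp volume and then eliminates each of the finitely many exceptional orbifolds on Adams' lists by checking that it is either arithmetic or has invariant trace field different from $\mathbb{Q}(\zeta_{12})$, appealing to \Cref{non arithmetic}. Your approach instead exploits the explicit height-$1$ maximal cusp coming from \Cref{packing} together with the presence of both tile types in the one-cusped peripheral tiling, and a short orbit--stabilizer count on tiles. This is more self-contained (no external classification needed) and in fact yields strictly stronger bounds than those stated: $\tfrac12$, $\tfrac14$, and $\tfrac{1+\sqrt3}{8}$ in the $(3,3,3)$, $(2,3,6)$, and $(2,4,4)$ cases respectively. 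The paper's route, by contrast, would also cover any one-cusped rigid-cusped orbifold with invariant trace field $\mathbb{Q}(\zeta_{12})$, not only mixed-platonic ones.

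Two small points of exposition: your sentence ``$n_2\ge n/2=3$'' is literally correct only for the $(2,3,6)$ case ($n=6$, square stabilizer of order at most $2$); in the $(3,3,3)$ case $n=3$ and the square stabilizer is trivial, so the correct statement is $n_2\ge n=3$. The numerical conclusion $n_2\ge 3$ is right in both cases. Also, your verification that the height-$1$ cross-section is maximal, and that both tile types appear, genuinely uses the one-cusped hypothesis---which is exactly the reading of ``single rigid cusp'' the paper intends.
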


\begin{proof}[Proof Sketch.]
First consider the case that $N$ has a $S^2(2,3,6)$ cusp. Then by \cite[Theorem 
3.2]{AdamsTop90}, if the cusp volume of $N$ is less than $\sqrt{3}/8$, then 
it is a unique orbifold with cusp volume $\sqrt{3}/24$, $\sqrt{3}/12$, 
$1/8$, $\sqrt{3}(3+\sqrt{5})/48$, $\sqrt{21}/24$. Orbifolds with the first 
three cusp volumes are discussed in \cite{NeumannReidNotes} and noted to be  
arithmetic. Although not explicitly stated in the Adams' paper, a close 
reading of his computation shows that if the cusp volume is $\sqrt{3}
(3+\sqrt{5})/48$, the orbifold is the tetrahedral orbifold 
$\Gamma(2,2,5,2,6,3)$ (see \cite[Section 4.7.1]{book} for notation), which 
has invariant trace field $\mathbb{Q}(\sqrt{5},\sqrt{-3})$. The orbifold with cusp volume $\sqrt{21}/24$. This orbifold is discussed in \cite[Lemma 5.1]{Neil_SmallKnots} where a representation of the fundamental group is given. We will use that notation. Using \cite[Lemma 3.5.9]{book} with a generating set of $t,r,t.\gamma$, we can compute the invariant trace field from that representation, which is $\mathbb{Q}(\sqrt{-3})$. 

Each orbifold with a $S^2(3,3,3)$ cusp on Adams' list in \cite[Corollary 4.1]{AdamsTop90} is the unique 2-fold cover of an orbifold with a $S^2(2,3,6)$ cusp and is covered be previous argument.

In the paragraph below \cite[Theorem 5.1]{AdamsTop90} and \cite{NeumannReidNotes}, it is discussed that the orbifolds with $S^2(2,4,4)$ and cusp volume 1/8 and $\sqrt{2}/8$ are both arithmetic. 
\end{proof}

We conclude this section with a lemma that shows there is only one possible translation length for a meridian of a knot complement that is mixed platonic and admits hidden symmetries.

\begin{lemma}\label{lem_one_poss_translation}
Suppose $M = \Hthree/\Gamma$ is a mixed-platonic knot complement with hidden symmetries.
Any peripheral element of $\Gamma$ representing a meridian of $M$ has translation length $2+\sqrt{3}$ on the horoball of $\mathcal{H}$ that it stabilizes, where $\mathcal{H}$ is the horoball packing supplied by \Cref{packing}.    
\end{lemma}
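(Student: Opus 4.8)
The plan is to combine \Cref{neilolarry}, which says a meridian of $M$ has translation length either $2+\sqrt{3}$ or $\sqrt{2+\sqrt{3}}$, with \Cref{prop_no_omega_plus_i}, which rules out the length $\sqrt{2+\sqrt{3}}$ whenever the relevant peripheral subgroup contains $3$-torsion. So the entire task reduces to showing that the cusp cross-section of $M$ admits an order-three rotational symmetry in the appropriate sense. First I would invoke \cite[Prop.~9.1]{NeumannReidArith}: since $M$ is a hyperbolic knot complement with hidden symmetries, $M$ covers a rigid-cusped orbifold $O$, i.e.\ $O$ has a cusp whose cross-section is $S^2(2,3,6)$, $S^2(3,3,3)$, or $S^2(2,4,4)$. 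Next, since $M$ is mixed-platonic and one-cusped, \Cref{commensurator} applies: the commensurator of $\Gamma$ preserves the horoball packing $\mathcal{H}$ and the tiling $\mathcal{P}$, and $O$ is covered by $M$ within this commensurability class, so $O = \Hthree/\Gamma_O$ is itself mixed-platonic with $\Gamma < \Gamma_O < \mathrm{Comm}(\Gamma)$, and the peripheral tiling of $O$ (the tiling of the cusp torus/sphere by equilateral triangles and unit squares induced by $\mathcal{P}$) is the same as that of $M$.

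Now I would rule out the $S^2(2,4,4)$ case for $O$. A $(2,4,4)$ cusp has no $3$-torsion, but its peripheral tiling would have to be a tiling of the plane by equilateral triangles and unit squares that is invariant under an order-four rotation. An order-four rotation forces the tiling to be invariant under a $90^\circ$ rotation, which is incompatible with the presence of equilateral triangles: more precisely, the set of edge directions of any triangle-and-square tiling with a square is not preserved by a quarter-turn unless there are no triangles at all (the triangles contribute $60^\circ$-related directions, squares contribute $90^\circ$-related ones, and no subgroup of rotations of the plane contains elements of order both $3$ and $4$ while preserving a discrete set of lines — by the crystallographic restriction). Hence an $S^2(2,4,4)$ cusp cannot be the peripheral quotient of a mixed-platonic orbifold whose tiling actually contains triangles, which it must, by definition. (Alternatively one can cite \cite{NeilOrbiCusps} Theorem 1.1 here, as the text does in the introduction, to get that the cusp of $O$ has order-three rotations directly.) So $O$ has an $S^2(2,3,6)$ or $S^2(3,3,3)$ cusp, and in either case the peripheral subgroup $\Lambda_O$ of $O$ — hence, passing to the intermediate cover, the relevant peripheral picture — contains order-three rotations.

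With $3$-torsion in hand, I would apply \Cref{prop_no_omega_plus_i} to $O$ (taking $N = O$, $c$ a parabolic fixed point, $\Lambda = \Lambda_O$): since $\Lambda_O$ has $3$-torsion, the minimal translation length of $\Lambda_O$ on its horoball cannot be $\sqrt{2+\sqrt{3}}$. But the minimal translation length of $\Lambda_O$ divides (or equals a submultiple of) that of the meridian subgroup $\Lambda \subset \Gamma$, and in any case the peripheral tiling of $M$ equals that of $O$, so the geometry of $\mathcal{H}$ near the cusp — in particular the set of possible translation lengths of primitive lattice elements, which includes the meridian length — is governed by $O$'s cusp. Combined with \Cref{neilolarry}, which already restricts the meridian length to $\{2+\sqrt{3},\ \sqrt{2+\sqrt{3}}\}$, ruling out $\sqrt{2+\sqrt{3}}$ leaves only $2+\sqrt{3}$, which is the claim.

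The main obstacle I anticipate is the bookkeeping in the middle step: making precise that ``$\Lambda_O$ has $3$-torsion $\Rightarrow$ the meridian of $M$ cannot have the short length.'' One must be careful that the meridian of $M$ is a primitive element of the cusp lattice $\Lambda_0 \subset \Lambda_O$ (the maximal abelian subgroup), and that its length is exactly the side-dependent quantity $\ell$ appearing in \Cref{prop_no_omega_plus_i} — i.e.\ that ``meridian translation length'' and ``minimal translation length of $\Lambda_0$'' coincide for a knot complement, or at least that the former being $\sqrt{2+\sqrt{3}}$ would force the latter to be $\sqrt{2+\sqrt{3}}$ as well (this holds because $\mathcal{H}$ is at height $1$ by \Cref{packing} and the meridian is the shortest peripheral curve, or because any shorter element of $\Lambda_0$ would give an even smaller area, also non-integral after the same computation). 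Ruling out $S^2(2,4,4)$ cleanly — either via the crystallographic-restriction argument above or by citing \cite{NeilOrbiCusps} — is the other point requiring care, but both routes are short.
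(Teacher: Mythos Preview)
Your overall strategy is right---invoke \Cref{neilolarry}, show that the commensurator quotient has $3$-torsion in its cusp (via \cite{NeumannReidArith} and \cite{NeilOrbiCusps}), then use \Cref{prop_no_omega_plus_i}---but the final step has a genuine gap. \Cref{prop_no_omega_plus_i} says only that the \emph{minimal} translation length in the peripheral subgroup $\Lambda_O$ of the rigid-cusped quotient $O$ cannot equal $\sqrt{2+\sqrt{3}}$. The meridian of $M$ lies in the sublattice $\Lambda_M\subset\Lambda_O$, so it need not realize the minimal translation length of $\Lambda_O$; thus knowing the minimum is not $\sqrt{2+\sqrt{3}}$ does not by itself rule out the meridian having that length. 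Your proposed patches do not close this: the meridian is not known to be the shortest curve in the \emph{larger} lattice $\Lambda_0^O$, and the ``smaller area is also non-integral'' claim fails, since the area computation in \Cref{prop_no_omega_plus_i} is specific to $\ell=\sqrt{2+\sqrt{3}}$ (indeed, the tilings classified in \Cref{And then there were two} have $3$-torsion and perfectly valid, shorter minimal translation lengths).

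The paper closes this gap with an extra ingredient you have not used: \Cref{lem_cusp_volume}, Adams' lower bounds on cusp volume for rigid-cusped orbifolds. If the meridian has length $\sqrt{2+\sqrt{3}}$, then the meridian and its rotate under the $3$-torsion give an upper bound on the area of a fundamental domain for $\Lambda_0^O$, hence on the cusp volume of $O$. \Cref{prop_no_omega_plus_i} then forces the minimal translation of $\Lambda_O$ to be strictly shorter, which sharpens the cusp volume bound below the thresholds of \Cref{lem_cusp_volume}, yielding the contradiction. Without the cusp-volume input (or some replacement that controls how much smaller than $\sqrt{2+\sqrt{3}}$ the minimal translation of $\Lambda_O$ can be), your argument does not go through.
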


\begin{proof}
The assumption that $M$ has hidden symmetries implies that the orientable commensurator quotient $N=\Hthree/Comm^+(\Gamma)$ of $M$ has 3-torsion on the cusp by \cite[Theorem 1.1]{NeilOrbiCusps} and \cite[Prop 9.1]{NeumannReidArith}. Assume that $\Gamma$ has a meridian of length $\sqrt{2+\sqrt{3}}$. Then the cusp volume for $N$ is at most $\frac{(\sqrt{2+\sqrt{3}})^2\sqrt{3}}{12}$ if $N$ has $6$-torsion on the cusp and $\frac{(\sqrt{2+\sqrt{3}})^2\sqrt{3}}{6}$ if not. Then by \Cref{prop_no_omega_plus_i}, we have that the minimal translation length for has to be smaller than this. Thus, the cusp volume is at most $\frac{(\sqrt{2+\sqrt{3}})^2\sqrt{3}}{24} < \frac{\sqrt{3}}{8}$ in the 6-torsion case and $\frac{(\sqrt{2+\sqrt{3}})^2\sqrt{3}}{12} < \frac{\sqrt{3}}{4}$ in the other case. In either case, \Cref{lem_cusp_volume} shows that this is not possible.
\end{proof}

\section{Peripheral tilings}\label{perti}

In this section we define \emph{peripheral tilings} of a mixed-platonic manifold or orbifold and prove a series of basic results about them, allowing us to show that certain tilings by triangles and squares cannot be the peripheral tiling of any one-cusped mixed-platonic manifold.

\begin{definition}\label{peripheral} A tiling $\Ttwo$ of $\mathbb{C}$ is a \textit{peripheral tiling} of a mixed-platonic manifold or orbifold $M = \Hthree/\Gamma$ if 
$\Gamma$ can be represented in standard position in the sense of \Cref{canon pos}, with  $\Gamma$-invariant tiling $\Pthree$, so that $\Ttwo = \Pthree\cap\partial B_{\infty}$, where $B_{\infty}$ is the horoball centered at $\infty$ of the packing supplied by \Cref{packing}. \end{definition}

Peripheral tilings were considered above (not by name) in \Cref{packing}. As observed there, such a tiling will necessarily be by equilateral triangles and squares, all of sidelength $1$.

\begin{lemma}\label{on til} For a mixed platonic orbifold $O = \mathbb{H}^3/\Gamma$ and any fixed cusp $c$ of $O$, there is a unique peripheral tiling $\Ttwo$ of $O$ such that $\Ttwo = \Pthree\cap\partial B_{\infty}$, for $B_{\infty}$ as in \Cref{peripheral} projecting to a neighborhood of $c$.

Now suppose $c$ is the only cusp of $O$ and, representing $\Gamma$ in standard position as in \Cref{peripheral}, let $\Lambda$ be the stabilizer of $\infty$ in $\Gamma$. For $\Ttwo$ and $B_{\infty}$ as above, and for every tile $P$ of $\Pthree$ and every ideal vertex $v$ of $P$, taking $B_v$ to be the horoball of the packing supplied by \Cref{packing} that is centered at $v$, there is a unique $\Lambda$-equivalence class of tiles of $\mathcal{T}$ that are $\Gamma$-isometric to $P\cap\partial B_v$.
\end{lemma}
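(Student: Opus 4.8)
The plan is to prove the two assertions in sequence, using the structure theory of the horoball packing $\mathcal{H}$ established in \Cref{packing}. For the first assertion, the content is really uniqueness of the peripheral tiling attached to a fixed cusp $c$, since existence is essentially the statement of \Cref{peripheral} together with the observation (already made after \Cref{non arithmetic}) that $\Gamma$ can always be conjugated to standard position so that a preferred cusp lifts to $\infty$. First I would note that by \Cref{packing} the tiling $\Pthree$ is uniquely determined by the $\Gamma$-invariant horoball packing $\mathcal{H}$, and $\mathcal{H}$ is in turn uniquely determined by $\Gamma$ and $\Pthree$; the only remaining freedom is the choice of lift. So suppose $\Gamma$ is represented in standard position in two ways, both with the horoball $B_\infty$ of \Cref{my son is also named bort} projecting to a neighborhood of $c$. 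The two standard-position representations differ by an isometry $\phi$ of $\Hthree$ conjugating one copy of $\Gamma$ to the other and fixing $\infty$ (since both packings have $B_\infty$ centered at $\infty$ and both are the packing $\mathcal{H}$ for the respective $\Gamma$-action). Thus $\phi$ is a similarity of $\mathbb{C}$; because both copies of $\mathcal{T}$ are tilings of $\mathbb{C}$ by triangles and squares of sidelength $1$ sharing the vertex structure of $\mathcal{H}\cap\partial B_\infty$, and $\phi$ carries one to the other preserving sidelengths, $\phi$ is an isometry of $\mathbb{C}$, and in fact must carry tiles to tiles. Hence the two peripheral tilings coincide as tilings of $\mathbb{C}$ up to this rigid motion, which is exactly the uniqueness claim (peripheral tilings are only well-defined up to the choice of standard-position conjugacy anyway, cf.~\Cref{peripheral}). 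I would phrase this carefully so that ``unique'' is understood in the appropriate equivalence.

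For the second assertion, the key point is that $\Gamma$ acts transitively on the cells of $\Pthree$ of each given type at the level needed: $O$ has one cusp, so every ideal vertex of every tile of $\Pthree$ is $\Gamma$-equivalent to $\infty$. Fix a tile $P$ of $\Pthree$ and an ideal vertex $v$ of $P$. By one-cuspedness there is $\gamma\in\Gamma$ with $\gamma(v) = \infty$; then $\gamma(B_v) = B_\infty$ by $\Gamma$-invariance and uniqueness of the horoball centered at $\infty$ (\Cref{packing}), so $\gamma$ carries the horospherical cross-section $P\cap\partial B_v$ isometrically onto $\gamma(P)\cap\partial B_\infty$, which is one of the tiles of $\mathcal{T}$. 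This shows existence of a tile of $\mathcal{T}$ that is $\Gamma$-isometric to $P\cap\partial B_v$. For uniqueness up to $\Lambda$-equivalence: if $\gamma_1, \gamma_2 \in \Gamma$ both send $v$ to $\infty$, then $\gamma_2\gamma_1^{-1}$ fixes $\infty$, hence lies in $\Lambda$, and it carries the first image tile to the second; conversely if two tiles $t_1, t_2$ of $\mathcal{T}$ are both $\Gamma$-isometric to $P\cap\partial B_v$, composing the two isometries $P\cap\partial B_v \to t_i$ (each realized by an element of $\Gamma$ as above, using that an isometry between horospherical cross-sections of cells of $\Pthree$ lifting to $\Gamma$-equivalence of the cells is induced by an element of $\Gamma$) gives an element of $\Gamma$ fixing $\infty$ and taking $t_1$ to $t_2$, i.e.~a $\Lambda$-equivalence. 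I would spell out the identification ``$\Gamma$-isometric cross-section $\Leftrightarrow$ $\Gamma$-equivalent cell'' using that each tile of $\mathcal{T}$ lies over a unique edge of $\Pthree$ ending at $\infty$ and is the cross-section of a unique tile of $\Pthree$, which is immediate from \Cref{packing}.

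The main obstacle, and the step to be most careful with, is the interplay between ``isometric'' and ``$\Gamma$-isometric'' (i.e.~related by an element of $\Gamma$) in the uniqueness part of assertion two: a priori two tiles of $\mathcal{T}$ could be abstractly isometric without being in the same $\Gamma$-orbit, and the statement really is about the $\Gamma$-orbit relation. The resolution is that the phrase ``$\Gamma$-isometric to $P\cap\partial B_v$'' is built so that every such tile arises as $\gamma(P)\cap\partial B_\infty$ for some $\gamma\in\Gamma$ with $\gamma(v)=\infty$, and then the ambiguity in $\gamma$ is precisely a left-multiplication by $\Lambda = \mathrm{Stab}_\Gamma(\infty)$; I would isolate this as the crux and dispatch it with the bijection-between-cells-and-cross-sections observation above, taking care that when $P$ is an octahedron the relevant cross-section $P\cap\partial B_v$ is a square and when $P$ is a tetrahedron it is a triangle, consistent with \Cref{bort} and with $\mathcal{T}$ being a tiling by unit triangles and squares.
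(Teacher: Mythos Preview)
Your proposal is essentially correct and, for the second assertion, follows exactly the paper's route: one-cuspedness gives $\Gamma$-transitivity on horoballs of $\mathcal{H}$, so some $\gamma\in\Gamma$ carries $(P,B_v)$ to a tile of $\mathcal{T}$ at $\infty$, and the ambiguity in $\gamma$ is precisely left multiplication by $\Lambda = \mathrm{Stab}_\Gamma(\infty)$.

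For the first assertion you take a slightly more elaborate path than the paper and end up with a slightly weaker conclusion. You compare two arbitrary standard-position representations via a conjugating isometry $\phi$ fixing $\infty$, argue $\phi$ acts as a Euclidean isometry on $\partial B_\infty$, and conclude uniqueness of $\mathcal{T}$ only up to rigid motion. The paper instead observes that any two horoballs of $\mathcal{H}$ projecting to the same cusp $c$ are $\Gamma$-equivalent, so the re-positioning isometry may be chosen in $\Gamma$ itself; since $\Gamma$ preserves $\Pthree$, the tiling $\mathcal{T}=\Pthree\cap\partial B_\infty$ is literally unchanged. This buys genuine uniqueness (not just up to isometry) with less work, and it is what the later applications use. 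Your version is not wrong, but you are working harder than necessary and hedging (``unique in the appropriate equivalence'') where the paper's simpler observation makes the hedge unnecessary.
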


\begin{proof}
    With $\Gamma$ represented in standard position with associated tiling $\Pthree$, let $\mathcal{H}$ be the horoball packing supplied by \Cref{packing}. Each horoball $B\in\mathcal{H}$ projects to a cusp neighborhood in $O$, and the preimage of any such cusp cross-section is a $\Gamma$-orbit in $\mathcal{H}$. For a fixed cusp $c$ of $O$ and a horoball $B\in\mathcal{H}$ projecting to a neighborhood of $c$, there is an isometry $\beta$ of $\mathbb{H}^3$ taking $B$ to $B_{\infty}$ and putting a tile of $\Pthree$ with an ideal vertex at the center of $B$ into standard position. Thus replacing $\Pthree$ and $\mathcal{H}$ with their images under this isometry, and $\Gamma$ by $\delta\Gamma\delta^{-1}$, we have again represented $\Gamma$ in standard position in such a way that $B_{\infty}$ projects to $c$ and $\Ttwo=\Pthree\cap\partial B_{\infty}$ is the associated peripheral tiling.

    That $\mathcal{T}$ is uniquely associated to $c$ stems from the fact that if $\delta$ belongs to $\Gamma$ then it preserves the tiling $\Pthree$, and hence the replacement procedure above does not change $\mathcal{T}$. And $B$ and $B_{\infty}$ project to the same cusp of $O$ if and only if they are $\Gamma$-equivalent. If $O$ has only one cusp then every horoball of $\mathcal{H}$ is $\Gamma$-equivalent to $B_{\infty}$. In this case, this holds in particular for the horoball $B_v\in\mathcal{H}$ centered at a prescribed ideal vertex $v$ of a prescribed tile $P$ of $\Pthree$, so $\delta$ as above takes $P\cap\partial B_v$ to a tile $T$ of $\mathcal{T}$. For another element $\gamma\in\Gamma$ taking $B_v$ to $B_{\infty}$, $\gamma\delta^{-1}\in\Lambda$ takes $T$ to $\gamma(P\cap\partial B_v)$.
\end{proof}

It is not true that every tiling $\Ttwo$ of $\mathbb{C}$ is the peripheral tiling of a mixed-platonic manifold or orbifold. In this section we establish a set of necessary criteria for a tiling to be a peripheral tiling, specifically of a \textit{one-cusped} mixed-platonic manifold or orbifold, and exhibit certain tilings that fail them.

\begin{definition} For a peripheral tiling $\mathcal{T}$ of a mixed-platonic manifold or orbifold, we declare a triangular tile to be \textit{TTT} if it abuts three triangles of $\mathcal{T}$ along its edges, \textit{OTT} if it abuts a square and two triangles of $\mathcal{T}$, \textit{OOT} if it abuts two squares and a triangle, and \textit{OOO} if it abuts all squares.\end{definition}

\begin{prop}\label{cheese brother} 
Suppose a tiling $\mathcal{T}$ of $\mathbb{C}$ is a peripheral tiling of a one-cusped, mixed-platonic orbifold. If $\mathcal{T}$ contains triangles of type OOT then it also contains some of type OOO or OTT; and if $\mathcal{T}$ contains some of type OTT then it also contains some of type OOT or TTT.\end{prop}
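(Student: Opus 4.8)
The plan is to exploit the fact that, in a one-cusped mixed-platonic orbifold, the peripheral tiling $\mathcal{T}$ governs the local combinatorics of the three-dimensional tiling $\mathcal{P}$ near \emph{every} ideal vertex of a tile, not just the vertex at $\infty$. Concretely, by \Cref{on til}, for every tile $P$ of $\mathcal{P}$ and every ideal vertex $v$ of $P$, the cross-section $P\cap\partial B_v$ is $\Gamma$-isometric to a tile of $\mathcal{T}$, and conversely every tile of $\mathcal{T}$ arises this way. A triangular tile $T$ of $\mathcal{T}$ of type OOT, OTT, OOO, or TTT records precisely which polyhedra of $\mathcal{P}$ meet along the three faces of the corresponding tetrahedron $P$ that are incident to a chosen ideal vertex: each edge of $T$ crossed by a square records an octahedron sharing that face of $P$, and each edge crossed by a triangle records a tetrahedron. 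Since every tetrahedron of $\mathcal{P}$ has four ideal vertices, and cross-sectioning at each of the four vertices gives a triangular tile of $\mathcal{T}$, a single tetrahedron of $\mathcal{P}$ gives rise to four triangular tiles of $\mathcal{T}$ (up to $\Lambda$-equivalence) whose types are determined by the \emph{global} pattern of which of its four face-neighbors are octahedra versus tetrahedra.

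The key step is a counting/double-counting argument on the faces of a single tetrahedron. Fix a tetrahedron $P$ of $\mathcal{P}$, and let $k \in \{0,1,2,3,4\}$ be the number of octahedra among its four face-neighbors. Each of the four ideal vertices $v$ of $P$ is incident to three of the four faces of $P$; the triangular tile $T_v := P\cap\partial B_v$ is of type OOO, OOT, OTT, or TTT according to whether $3,2,1,0$ of those three faces border an octahedron. Summing the number of octahedron-faces seen over the four vertices counts each of $P$'s faces exactly three times (each face is incident to three vertices), so $\sum_v (\text{number of O-edges of } T_v) = 3k$. Now I would argue by contradiction: suppose $\mathcal{T}$ contains an OOT tile but no OOO or OTT tile. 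Then \emph{every} triangular tile of $\mathcal{T}$ is of type OOT or TTT, hence (by the correspondence) for \emph{every} tetrahedron $P$ of $\mathcal{P}$ and every vertex $v$, $T_v$ is OOT or TTT, i.e. the three faces of $P$ at $v$ include either exactly two or exactly zero octahedron-neighbors. I want to show this forces $k$ to be impossible. If $P$ is the tetrahedron realizing the given OOT tile at some vertex $v_0$, then $k \geq 2$. One checks the small cases: if $k=2$, the two octahedron-faces share an edge $e$; the vertex of $P$ opposite... — more carefully, of the four vertices, the two endpoints of $e$ each see both octahedron-faces (type OOT, fine), but the other two vertices each see exactly one octahedron-face, giving an OTT tile, contradiction. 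If $k=3$, every vertex is incident to at least two octahedron-faces, and the vertex shared by all three octahedron-faces gives OOO; the other three vertices give OOT — still a contradiction via the OOO tile. If $k=4$, every $T_v$ is OOO, contradiction. So no value of $k \ge 2$ is consistent, which is the contradiction. The symmetric statement (OTT present forces OOT or TTT) follows by the dual computation: if every triangular tile is OTT or OOO, take the tetrahedron realizing an OTT tile, so $k \le 3$ with some vertex seeing exactly one octahedron-face; examine $k \in \{1,2,3\}$ and in each case produce a vertex whose incident triple of faces contains $0$ or $2$ octahedra, i.e. a TTT or OOT tile.

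The main obstacle I anticipate is making the vertex-by-vertex case analysis on a tetrahedron airtight and correctly invoking \Cref{on til} to pass between "tile of $\mathcal{T}$" and "vertex cross-section of a tetrahedron of $\mathcal{P}$" in the one-cusped setting — in particular being careful that the correspondence is surjective (every triangular tile of $\mathcal{T}$ really is a cross-section of some tetrahedron at some vertex, so a type not appearing in $\mathcal{T}$ cannot appear as any vertex cross-section of any tetrahedron) and that "abuts a square" on the cusp tiling genuinely corresponds to "the face of $P$ at that edge is shared with an octahedron." Once that dictionary is pinned down, the combinatorics of a single tetrahedron with four face-neighbors is elementary, and the parity-style obstruction ($3/2 \notin \mathbb Z$ is not needed here, but the incidence count $3k$ with the constraint that each $T_v$-count lies in $\{0,2\}$ or $\{1,3\}$) does the rest.
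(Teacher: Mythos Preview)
Your proposal is correct and takes essentially the same approach as the paper: both arguments fix a tetrahedron $P$ of $\mathcal{P}$ realizing the given triangular tile at $\infty$, invoke \Cref{on til} to identify the cross-sections of $P$ at its other ideal vertices with tiles of $\mathcal{T}$, and then derive the forbidden tile type from the combinatorics of which faces of $P$ abut octahedra. The only difference is organizational---the paper picks a specific vertex of $P$ and asks whether the single unseen face (opposite $\infty$) abuts a tetrahedron or an octahedron, whereas you run a case analysis on the total count $k$ of octahedral neighbors; these are two phrasings of the same case split, and your inclusion of $k=4$ (respectively $k=3$ in the dual statement) is harmless but redundant, since the given tile type at $\infty$ already forces $k\le 3$ (respectively $k\le 2$).
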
\label{triangular tile conditions}

\begin{proof} Suppose such a tiling $\mathcal{T}$ has triangular tiles of type OOT but none of type OTT. Consider one triangular tile of type OOT as in Left of \Cref{triangluar tile types}.

\begin{figure}[ht]
\begin{tikzpicture}[scale=0.5]

\draw [thick] (0,0) -- (2,0) -- (1,1.732) -- cycle;
\draw [thick] (0,0) -- (0,-2) -- (2,-2) -- (2,0) -- (3.732,1) -- (2.732,2.732) -- (1,1.732) -- (-1,1.732) -- cycle;

\draw (0,0) circle [radius=0.2];
\fill (2,0) circle [radius=0.2];

\begin{scope} [xshift=8cm]
\draw [thick] (-1,1.732) -- (1,1.732) --(3, 1.732)--(2,0)--(2,-2)--(0,-2)--(0,0)--(-1,1.732);
\draw [thick] (0,0)--(1,1.732)--(2,0)--cycle;
\draw (0,0) circle [radius=0.2];
\fill (1,1.732) circle [radius=0.2];

\end{scope}
\end{tikzpicture}

\caption{Left: Triangular tile of type OOT, Right: Triangular tile of type OTT}
\label{triangluar tile types}

\end{figure}
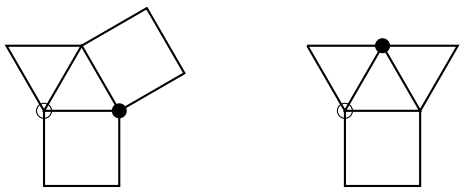

Each vertex of $\mathcal{T}$ represents an edge of the associated $\Gamma$-invariant tiling of $\Hthree$ that has one ideal point at $\infty$. Let $v_0$ be the other ideal endpoint of the edge represented by the open-circled vertex in the left of \Cref{triangluar tile types}, and let $P$ be the regular ideal tetrahedron that intersects $\mathcal{T}$ in the middle triangle (of type OOT). Of the two faces of $P$ containing both $v_0$ and $\infty$, one abuts an octahedron and one a tetrahedron. It follows from \Cref{on til} that there exists $\gamma \in \Gamma$ such that $\gamma(v_0)=\infty$ and $\gamma(P \cap \mathcal{T})$ is a tile in F. Now if the final face $f$ of $P$ containing $v_0$, the one opposite $\infty$, abuts a tetrahedron, then $\gamma(P \cap \mathcal{T})$ is a triangle of type OTT contradicting our assumption. So, $f$ abuts an octahedron. Let $v_1$ be the ideal vertex at the other endpoint of the edge represented by the shaded-circled vertex. Then all faces of $P$ that contain the ideal vertex $v_1$ abut octahedra. Thus using \Cref{on til} again, we can conclude that there is triangular tile of type OOO. 

An entirely analogous argument using the open circled and the shaded-circled vertices in the right of \Cref{triangluar tile types} shows that if $\mathcal{T}$ contains tiles of type OTT and none of type OOT then it also contains some of type TTT. Hence, we are done. \end{proof}

The result below is a sample application of \Cref{cheese brother}

\begin{corollary}\label{pom} Neither tiling of $\mathbb{C}$ consisting of $\Lambda_0$-translates of the fundamental domains of \Cref{tiddly}, where $\Lambda_0 = \langle \mu,\lambda \rangle$, is the peripheral tiling of a one-cusped, mixed-platonic orbifold.\end{corollary}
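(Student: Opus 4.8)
The plan is to invoke \Cref{cheese brother} directly. Write $\mathcal{T}_1$ and $\mathcal{T}_2$ for the two tilings of $\mathbb{C}$ obtained as the sets of $\Lambda_0$-translates of the two fundamental domains pictured in \Cref{tiddly}. Each is a tiling of the plane by unit equilateral triangles and squares on which $\Lambda_0 = \langle \mu, \lambda \rangle$ acts with compact quotient, so each has only finitely many $\Lambda_0$-orbits of triangular tiles. The first step is to choose an orbit representative for each such orbit and read off from \Cref{tiddly} which of the four types TTT, OTT, OOT, OOO it has, by identifying the three tiles abutting its edges.

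The second step is the comparison with \Cref{cheese brother}. I expect that, after possibly interchanging the labels, $\mathcal{T}_1$ contains triangular tiles of type OOT but none of type OOO or of type OTT; the first assertion of \Cref{cheese brother} then shows $\mathcal{T}_1$ cannot be the peripheral tiling of a one-cusped mixed-platonic orbifold. Symmetrically, I expect $\mathcal{T}_2$ contains triangular tiles of type OTT but none of type OOT or TTT, so the second assertion of \Cref{cheese brother} rules out $\mathcal{T}_2$ as well. Together these give the corollary. (Should both tilings happen to violate the same one of the two implications of \Cref{cheese brother}, the argument is identical with only the bookkeeping changed.)

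Since \Cref{cheese brother} does the real work, there is no substantive obstacle here; the only point needing care is that the enumeration of triangle types in each tiling be genuinely exhaustive -- one must check \emph{every} $\Lambda_0$-orbit of triangular tiles -- so that the asserted \emph{absence} of the completing types (OOO and OTT for $\mathcal{T}_1$, OOT and TTT for $\mathcal{T}_2$) is real and not an artifact of having examined too small a portion of the tiling.
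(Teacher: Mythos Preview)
Your approach is exactly the paper's: inspect the triangle types appearing in each fundamental domain and apply \Cref{cheese brother}. For the record, the paper finds that the left-hand tiling has only triangles of types OOO and OTT (so the second implication of \Cref{cheese brother} is violated), and the right-hand tiling has only OOT triangles (violating the first implication)---this matches your expectation up to swapping labels, with the minor addendum that the OTT tiling also contains OOO triangles (which does not affect the argument).
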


\begin{figure}[ht]
\begin{tikzpicture}

\begin{scope}[xshift=-4cm, yshift=1cm]
\fill[color=yellow!20] (0,0)--(2.366,-1.366)--(4.732,0)--(2.366,1.366)--cycle;
\draw [thick] (0,0) -- (1,0) -- (0.5,0.866) -- cycle;
\draw [thick, xshift=1.866cm, yshift=0.5cm] (0,0) -- (1,0) -- (0.5,0.866) -- cycle;
\draw [thick, xshift=3.732cm] (0,0) -- (1,0) -- (0.5,0.866) -- cycle;
\draw [thick, xshift=1.366cm, yshift=-1.366cm] (0,0) -- (1,0) -- (0.5,0.866) -- cycle;
\draw [thick, xshift=2.366cm, yshift=-1.366cm] (0,0) -- (1,0) -- (0.5,0.866) -- cycle;

\draw [thick] (0,0) -- (1,0) -- (0.5,-0.866) -- cycle;
\draw [thick, xshift=3.732cm] (0,0) -- (1,0) -- (0.5,-0.866) -- cycle;
\draw [thick, xshift=1.366cm, yshift=1.366cm] (0,0) -- (1,0) -- (0.5,-0.866) -- cycle;
\draw [thick, xshift=2.366cm, yshift=1.366cm] (0,0) -- (1,0) -- (0.5,-0.866) -- cycle;

\draw [thick] (1.866,0.5) -- (1.866,-0.5) -- (2.866,-0.5) -- (2.866,0.5);

\draw [thick] (1,0) -- (1.866,0.5) -- (1.366,1.366) -- (0.5,0.866) -- cycle;
\draw [thick, xshift=2.366cm, yshift=-1.366cm] (1,0) -- (1.866,0.5) -- (1.366,1.366) -- (0.5,0.866) -- cycle;

\draw [thick] (1,0) -- (1.866,-0.5) -- (1.366,-1.366) -- (0.5,-0.866) -- cycle;
\draw [thick, xshift=2.366cm, yshift=1.366cm] (1,0) -- (1.866,-0.5) -- (1.366,-1.366) -- (0.5,-0.866) -- cycle;

\draw [->] (0,0) --(1.183, -.683);
\draw (1.183, -.683)--(2.366,-1.366);
\draw [->] (0,0) -- (1.183, .683);
\draw (1.183, .683)--(2.366,1.366);
\draw [dashed] (2.366,1.366) -- (4.732,0) -- (2.366,-1.366);

\node at (.9,-0.85) {$\mu$};
\node at (.9,0.85) {$\lambda$};

\end{scope}

\begin{scope}[xshift=4cm]
\fill[color=yellow!20] (0,0)--(0,1.93)--(1.93,1.93)--(1.93,0)--cycle;
\draw [dashed] (0,0) -- (0,1.93) -- (1.93,1.93) -- (1.93,0) -- cycle;
\draw [thick] (0,0) -- (0.2588,0.9659) -- (0,1.93) -- (0.9659,1.673) -- (1.93,1.93) -- (2.1906,0.9659) -- (1.93,0) -- (0.9659,-0.2588) -- cycle;
\draw [thick] (0.9659,1.673) -- (0.2588,0.9659) -- (1.2247,0.7071) -- (0.9659,-0.2588);
\draw [thick] (0.9659,1.673) -- (1.2247,0.7071) -- (2.1906,0.9659);
\draw [thick] (1.2247,0.7071) -- (1.93,0);

\draw [->] (0,0) -- (0, .965);
\draw (0,.965)--(0,1.93); 
\draw [->] (0,0) -- (.965,0);
\draw(.965,0)--(1.93,0);
\node [left] at (0,0.9659) {$\lambda$};
\node [above] at (0.9,0) {$\mu$};

\end{scope}

\end{tikzpicture}
\caption{Tilings of $\mathbb{C}$ that cannot appear as peripheral tilings of mixed-platonic manifolds }
\label{tiddly}
\end{figure}
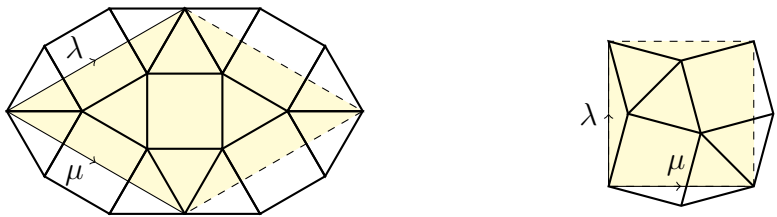

\begin{proof} Inspecting the fundamental domain on the left, we find only triangles of type OOO and OTT. On the right, every triangular tile is of type OOT. \Cref{cheese brother} 
now implies the result. \end{proof}

\begin{lemma}\label{one tile} For a one-cusped, mixed-platonic manifold $M=\Hthree/\Gamma$, upon representing $\Gamma$ in standard position with $\Ttwo\subset\partial B_{\infty}$, the peripheral subgroup $\Lambda<\Gamma$ stabilizing $\infty$ has a connected fundamental domain $F$ that is a union of tiles of $\Ttwo$, with the property that for every ideal vertex $v$ of every tile $P$ of the associated $\Gamma$-invariant tiling of $\Hthree$, there is a unique $\gamma\in\Gamma$ taking $v$ to $\infty$ such that $\gamma.P\cap\mathcal{T}$ is a tile of $F$. In particular, $F$ has exactly $4n_1$ triangular faces, and $6n_2$ octahedral faces, for $n_1$ and $n_2$ as in \Cref{mtothep}.\end{lemma}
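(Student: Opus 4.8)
The plan is to reduce the statement to a combinatorial fact about the tiling $\mathcal{T}$ of $\partial B_\infty\cong\mathbb{C}$ modulo the $\mathbb{Z}^2$-action of $\Lambda$, and then to produce $F$ by a greedy construction. Two facts are used throughout: $\Gamma$ is torsion-free (as $M$ is a manifold), so $\operatorname{Stab}_\Gamma(P)$ is trivial for every tile $P$ of $\Pthree$ (the symmetry group of a regular ideal tetrahedron or octahedron being finite); and $\Lambda\cong\mathbb{Z}^2$ acts freely on the bounded tiles of $\mathcal{T}$. Now fix a flag $(P,v)$, i.e.\ a tile $P$ of $\Pthree$ together with an ideal vertex $v$ of it, and put $A=\{\gamma\in\Gamma:\gamma(v)=\infty\}$, which is nonempty since $M$ has a single cusp. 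For $\gamma\in A$ the image $\gamma(P)$ is a tile of $\Pthree$ with an ideal vertex at $\infty$, so $\Phi(\gamma):=\gamma(P)\cap\partial B_\infty$ is a tile of $\mathcal{T}$; the resulting map $\Phi\colon A\to\mathcal{T}$ is injective, because such a tile of $\Pthree$ is recovered from its intersection with $\partial B_\infty$ (the other ideal vertices are the feet of the remaining vertices of that polygon, and a regular ideal polyhedron is determined by its ideal vertices), so $\Phi(\gamma_1)=\Phi(\gamma_2)$ gives $\gamma_1(P)=\gamma_2(P)$ and hence $\gamma_2^{-1}\gamma_1\in\operatorname{Stab}_\Gamma(P)=1$. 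Since any two elements of $A$ differ by an element of $\Lambda$, the image $\Phi(A)$ is a single $\Lambda$-orbit of tiles — the one associated to $(P,v)$ by \Cref{on til}. So ``there is a unique $\gamma\in\Gamma$ with $\gamma(v)=\infty$ and $\gamma(P)\cap\mathcal{T}$ a tile of $F$'' is equivalent to ``$F$ contains exactly one tile of $\Phi(A)$.''

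Next I would count. As $(P,v)$ varies, the orbits $\Phi(A)$ exhaust \emph{all} $\Lambda$-orbits of tiles of $\mathcal{T}$: every tile of $\mathcal{T}$ is $Q\cap\partial B_\infty$ for a unique tile $Q$ of $\Pthree$ having $\infty$ as an ideal vertex, giving a $\Lambda$-equivariant bijection between tiles of $\mathcal{T}$ and flags $(Q,\infty)$; and $\Lambda$-orbits of such flags biject with $\Gamma$-orbits of all flags (using one-cuspedness to carry any $v$ to $\infty$, and $\operatorname{Stab}_\Gamma=1$ to see that $(Q,\infty)$ and $(Q',\infty)$ are $\Gamma$-equivalent precisely when they are $\Lambda$-equivalent). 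As $\operatorname{Stab}_\Gamma(P)=1$, each of the $n_1$ $\Gamma$-orbits of tetrahedra accounts for $4$ orbits of flags and each of the $n_2$ orbits of octahedra for $6$, so there are exactly $4n_1+6n_2$ orbits of tiles of $\mathcal{T}$, of which $4n_1$ are triangular (cut from tetrahedra) and $6n_2$ square — the ``octahedral'' faces, cut from octahedra. The final sentence of the lemma thus follows once $F$ is built to contain exactly one tile from each $\Lambda$-orbit.

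It remains to build such an $F$ that is connected, and to check it is a fundamental domain for $\Lambda$ on $\mathbb{C}$. Since $\Lambda$ acts freely on tiles, $\mathcal{T}$ descends to a finite tiling $\bar{\mathcal{T}}$ of the torus $\mathbb{T}=\mathbb{C}/\Lambda$ whose dual graph is connected (as $\mathbb{T}$ minus the finitely many vertices of $\bar{\mathcal{T}}$ is connected). Build $F$ greedily: start from a single tile; while some $\Lambda$-orbit is unrepresented in $F$, connectedness of the dual graph supplies a tile of $\bar{\mathcal{T}}$ outside $\pi(F)$ sharing an edge with a tile of $\pi(F)$, and lifting that edge to an edge in the frontier of $F$ exhibits a tile of $\mathcal{T}$ of a previously unseen orbit that shares an edge with $F$; adjoin it. This halts after $4n_1+6n_2-1$ steps with $F$ connected and meeting every $\Lambda$-orbit exactly once, so $\pi(F)=\mathbb{T}$ and hence $\Lambda\cdot F=\mathbb{C}$. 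Finally, if $x\in\operatorname{int}(\lambda F)\cap\operatorname{int}(F)$ for some $\lambda\ne 1$, one may choose $x$ in the interior of a tile $T$, and then $T\subset F$ and $\lambda^{-1}T\subset F$ are two tiles of a single $\Lambda$-orbit, forcing $T=\lambda^{-1}T$ and, by freeness, $\lambda=1$ — a contradiction. Hence $F$ is a fundamental domain, and by the first two paragraphs it has the asserted property.

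I expect the main obstacle to be expository rather than conceptual: the genuinely delicate point is that the \emph{uniqueness} of $\gamma$ depends on injectivity of $\Phi$, which uses $\operatorname{Stab}_\Gamma(P)=1$ — i.e.\ that $M$ is a manifold, not merely an orbifold — and the greedy construction of a connected fundamental domain consisting of one tile per orbit, though standard, must be arranged carefully as above. Everything else is bookkeeping along the chain of bijections: flags at $\infty$ $\leftrightarrow$ tiles of $\mathcal{T}$, their $\Lambda$-orbits $\leftrightarrow$ $\Gamma$-orbits of all flags $\leftrightarrow$ the integers $n_1$ and $n_2$.
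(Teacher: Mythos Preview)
Your proof is correct and takes essentially the same approach as the paper's: both use torsion-freeness for trivial tile stabilizers, one-cuspedness for transitivity on ideal vertices, and a greedy construction of $F$ from one tile per $\Lambda$-orbit. Your version is more explicit about the bijections (the map $\Phi$, the flag counting) and routes the greedy step through the torus quotient rather than working directly in the plane, but these are presentational rather than substantive differences.
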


\begin{proof}
The key observation here is that if $M=\Hthree/\Gamma$ has only one cusp then any two horoballs of the packing $\mathcal{H}$ supplied by \Cref{packing} are $\Gamma$-equivalent. By the definition of mixed-platonic orbifolds, the $\Gamma$-invariant tiling by tetrahedra and octahedra has at least one $\Gamma$-orbit of each type. Any ideal tetrahedron of this tiling has a horoball of $\mathcal{H}$ centered at each of its ideal points; hence a $\Gamma$-translate of this tetrahedron has an ideal vertex at $\infty$---and therefore intersects $\partial B_{\infty}$. The same holds for ideal octahedra.

If $M$ is a manifold then since $\Gamma$ is torsion-free, no element of the peripheral subgroup $\Lambda$ stabilizing $\mathcal{T}$ takes any square or triangle to itself. It is thus possible to construct a fundamental domain for $\Lambda$ as a union of squares and triangles, by starting with a given tile and working outward along edges, including each newly-encountered tile in the union if and only if its $\Lambda$-orbit does not belong to the union. This process terminates, since $\partial B_{\infty}/\Lambda$ has finite area. It yields a connected union $F$ of squares and triangles, containing at most one representative of each $\Lambda$-orbit, with the property that each tile edge in the frontier of $F$ is the intersection of a tile in $F$ and a tile outside $F$ that is $\Lambda$-equivalent to one inside of it. We claim that $F$ contains a representative of \textit{every} $\Lambda$-orbit and hence is the desired fundamental domain.

If this were not the case then by choosing a tile $P$ of $\mathcal{T}$ that is closest to $F$ with the property that its orbit is not represented in $F$, we can ensure that a  tiles $P'$ abutting $P$ does have its $\Lambda$-orbit represented in $F$. But then the element of $\Lambda$ that moves $P'$ into $F$ would take $P$ either into or adjacent to $F$, respectively contradicting either the selection of $P$ or the construction of $F$. This proves the claim.

In the case that $M$ is a manifold and hence $\Gamma$ is torsion-free, we note further that no element of $\Gamma$ stabilizes any tile of its invariant tiling. Since $M$ is one-cusped it follows that each $\Gamma$-orbit of tetrahedra (or respectively, octahedra) yields exactly four (resp.~six) distinct $\Lambda$-orbits of triangles (resp.~quadrilaterals) in $\mathcal{T}$.
\end{proof}

We conclude this section by using a more sophisticated version of the same approach to prove the following result, addressing a tiling that proves exceptional from other perspectives.

\begin{prop}\label{no ace} The tiling \aceofbase~ pictured in \Cref{fig_ace} 
is not the peripheral tiling of a one-cusped mixed-platonic manifold.\end{prop}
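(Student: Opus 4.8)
The plan is to derive a contradiction by combining the combinatorial bookkeeping of \Cref{one tile} with a multiplicity-tracking refinement of the local argument behind \Cref{cheese brother}. Suppose, for contradiction, that $\aceofbase$ is the peripheral tiling of a one-cusped mixed-platonic manifold $M=\Hthree/\Gamma$, and represent $\Gamma$ in standard position with associated $\Gamma$-invariant tiling $\Pthree$ and peripheral subgroup $\Lambda$ stabilizing $\infty$. By \Cref{one tile} there is a connected fundamental domain $F$ for $\Lambda$ that is a union of tiles of $\aceofbase$, has exactly $4n_1$ triangular tiles and $6n_2$ square tiles (notation as in \Cref{mtothep}), and contains exactly one tile from each $\Gamma$-orbit of flags $(P,v)$ --- a tile $P$ of $\Pthree$ together with an ideal vertex $v$ of $P$ --- namely the one cut out by the unique $\gamma\in\Gamma$ with $\gamma(v)=\infty$ for which $\gamma(P)\cap\aceofbase$ is a tile of $F$. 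Since $M$ is a manifold, $\Lambda$ consists of translations and hence preserves the type (one of $TTT$, $OTT$, $OOT$, $OOO$ for a triangle, and the analogous types for a square) of every tile of $\aceofbase$; so $F$ realizes each $\Lambda$-orbit of tiles with a well-defined multiplicity, and the full list of these multiplicities can be read directly off the fundamental domain pictured in \Cref{fig_ace}.

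The next step is to record how the decomposition into tetrahedra and octahedra constrains these multiplicities. Fix a tetrahedron $P$ of $\Pthree$ and label each of its four $2$-faces by $O$ or $T$ according to whether the tile of $\Pthree$ adjacent across that face is an octahedron or a tetrahedron. For an ideal vertex $v$ of $P$, the triangle of $F$ associated to the flag $(P,v)$ has its three edges lying on the three faces of $P$ through $v$, so its type is exactly the multiset of those three labels (a face labeled $O$ meaning the corresponding edge of the triangle is shared with a square of $\aceofbase$). Running $v$ over the four vertices of $P$, the four triangles of $F$ coming from $P$ have types given by the four three-element submultisets of $P$'s label multiset; indexing by the number of $O$'s among $P$'s four faces, the only possibilities are: all four of type $TTT$; three $OTT$ and one $TTT$; two $OOT$ and two $OTT$; one $OOO$ and three $OOT$; or all four $OOO$. (Forgetting multiplicities recovers exactly \Cref{cheese brother}.) An entirely parallel analysis --- labeling the eight faces of an octahedron and running over its six vertices, each meeting four faces --- produces a short explicit list of the six-tuples of square-types a single octahedron can contribute. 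Consequently the vector of triangle-type multiplicities of $F$ must be a non-negative integer combination of the five quadruple-vectors above, and the vector of square-type multiplicities an analogous combination of the octahedral six-tuple-vectors; in particular $\#OTT\equiv\#OOT\pmod 3$ in $F$, a positive $\#TTT$ forces either a tetrahedron with all four faces labeled $T$ or at least three triangles of type $OTT$, and there are further divisibility restrictions of the same flavour at the $OOO$/$OOT$ end.

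The proof then reduces to reading the triangle-type multiplicities --- and, if these alone do not suffice, the square-type multiplicities --- of $\aceofbase$ off \Cref{fig_ace} over a fundamental domain for its translation lattice (whose scale is pinned down by \Cref{neilolarry}), and checking that the resulting vector or vectors cannot be expressed as such non-negative integer combinations, so that no consistent way to organize the tiles of $F$ into $n_1$ tetrahedra and $n_2$ octahedra exists. I expect this last step to be the main obstacle. Unlike the tilings of \Cref{pom}, the tiling $\aceofbase$ is presumably arranged so that its triangle types already satisfy the weaker closure conditions of \Cref{cheese brother}, so one must genuinely exploit the finer linear-diophantine constraints above, and it may be necessary to invoke the square-type count --- which rests on the more intricate octahedral flag analysis --- to eliminate the remaining candidate groupings. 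Carefully enumerating the tile types of $\aceofbase$ and verifying the incompatibility, rather than any conceptual difficulty, is where the work lies.
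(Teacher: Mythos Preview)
Your multiplicity approach does not close, and this is not merely a matter of unfinished computation. Reading off $\aceofbase$: per $\Lambda$-fundamental domain with $d$ squares there are $d$ triangles of type $OTT$, $d$ of type $OOT$, $d/3$ of type $TTT$, and none of type $OOO$. Writing your five tetrahedron vectors in coordinates $(\#TTT,\#OTT,\#OOT,\#OOO)$ as $(4,0,0,0)$, $(1,3,0,0)$, $(0,2,2,0)$, $(0,0,3,1)$, $(0,0,0,4)$, the target $(d/3,d,d,0)$ \emph{is} a non-negative integer combination: take $d/12$ tetrahedra of type $TTTT$ and $d/2$ of type $OOTT$, all others zero (and $d=6n_2$ is automatically divisible by $12$ once one checks $n_2$ is even, which the equations force). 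The square side does not save you either: every square of $\aceofbase$ is of type $TTTO$, and an octahedron with exactly two \emph{opposite} faces labeled $O$ produces six vertex cross-sections each of type $TTTO$, so the square-type vector is also realizable. All of your linear-diophantine constraints are satisfied.

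The paper's proof begins with exactly your tetrahedron bookkeeping and reaches the same conclusion that every $OTT$ triangle comes from an $OOTT$ tetrahedron, but then makes a further move that multiplicity counts cannot see: it tracks an \emph{adjacency}. Two $OOTT$ tetrahedra that share a face (as must occur inside any hexagon of six $OTT$ triangles in $\aceofbase$) are then viewed from the ideal vertex $v$ of that shared face opposite the hexagon's center; there the two cross-sections are $OOT$ triangles meeting along their $T$-edges. But in $\aceofbase$ every $OOT$ triangle meets a $TTT$ triangle across its $T$-edge, a contradiction via \Cref{on til}. This configuration argument lives one level above your vector constraints: it uses how specific tiles of $\aceofbase$ sit next to each other, not just how many of each type there are. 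Your framework would need to be enriched to track such adjacency data before it could succeed here.
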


\begin{figure}
\includegraphics[width=3.5 in]{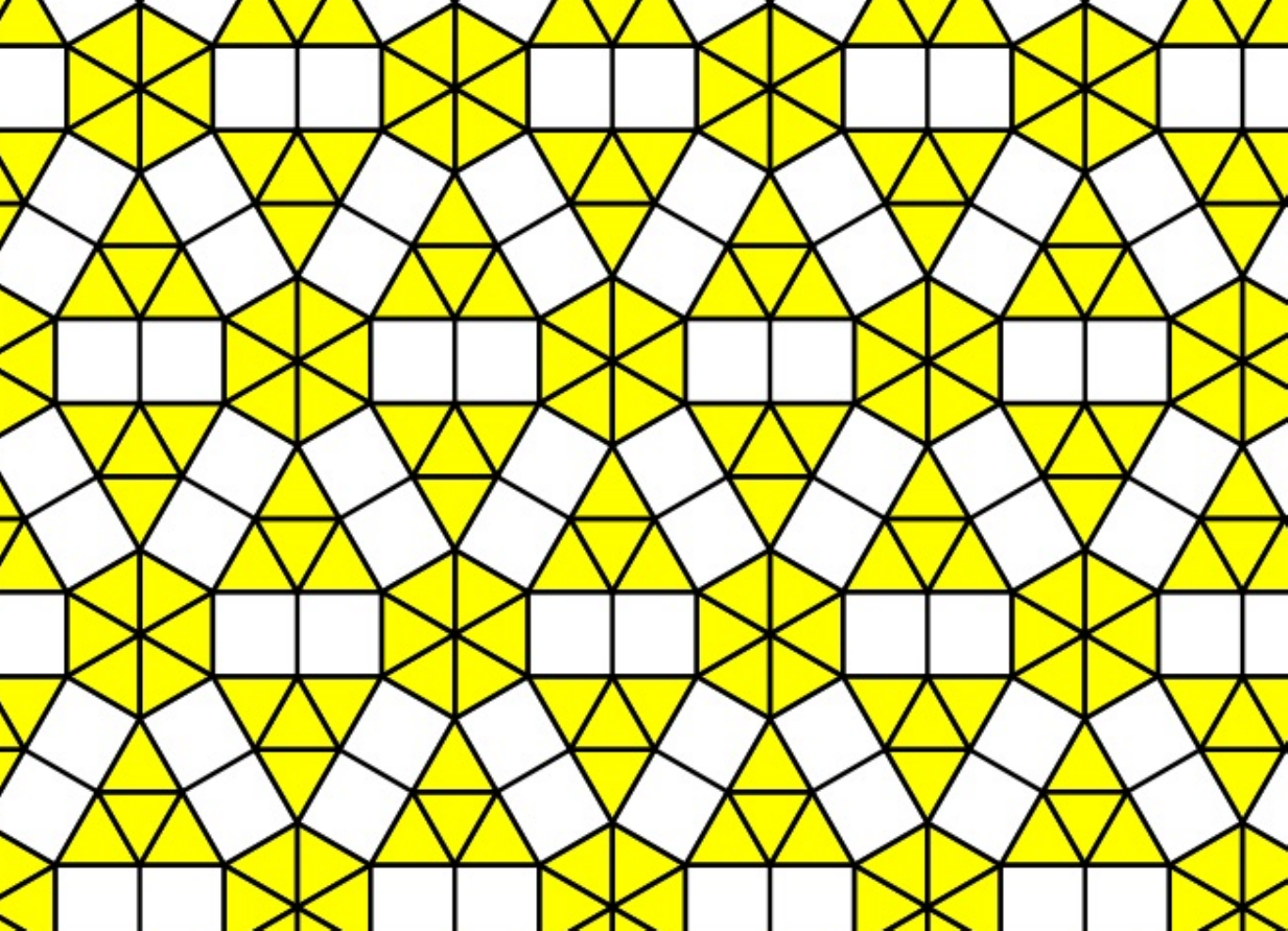}
\caption{\aceofbase~ - a tiling of the plane where each square is surrounded by three triangles and one other square, which also has symmetries of order $3$. }
\label{fig_ace}
\end{figure}

\begin{proof}
The orientation-preserving symmetry group $\Lambda_0$ of $\aceofbase$\ has an order-six rotation fixing the center of each hexagonal region that is a union of triangles of $\aceofbase$\ (yellow in the Figure); an order-three rotation fixing the center of each triangular region that is a union of triangles of $\aceofbase$\ (also yellow in the Figure), and an order-two rotation exchanging any pair of squares of $\aceofbase$\ that share an edge. Therefore it is a $(2,3,6)$-rotation group. It acts on the triangle classes as follows:\begin{itemize}
	\item transitively on TTT triangles---each of which is at the center of a yellow triangular region, and has a stabilizer of order three;
	\item simply transitively on the OTT triangles, which occupy the yellow hexagonal regions;
	\item simply transitively on the OOT triangles, each of which lies in a corner of a yellow triangular region; and
	\item simply transitively on squares.\end{itemize}
Consequentially, a fundamental domain for $\Lambda_0$ must contain the area of one square, OTT triangle, and OOT triangle each, and one-third the area of a TTT triangle. Therefore any lattice subgroup $\Lambda$ of $\Lambda_0$ that has $d$ distinct orbits of squares will also have $d$ distinct orbits each of OOT and OTT triangles, and $d/3$ distinct orbits of TTT triangles.

Now let us suppose that a one-cusped mixed-platonic manifold $M$ that divides into regular ideal tetrahedra and octahedra has the tiling of $\aceofbase$\ as its peripheral tiling. We classify the tetrahedra decomposing $M$ as follows:\begin{itemize}
	\item ``TTTT'', for those abutting tetrahedra across all faces;
	\item ``OOTT'', for those abutting octahedra across two faces and tetrahedra across the others; and
	\item ``OTTT'', for those abutting an octahedron across one face and tetrahedra across the others.\end{itemize}
Of the four triangular horospherical cross sections of the ideal vertices of a tetrahedron, for the types above we have the following classification:\begin{itemize}
	\item for a tetrahedron of type TTTT, all four triangles are of type TTT;
	\item for a tetrahedron of type OOTT, two triangles are of type OTT, and two are of type OOT;
	\item for a tetrahedron of type OTTT, three triangles are of type OTT, and one is of type TTT.\end{itemize}
Note that while there could in principle be other tetrahedron types (eg.~OOOT), these all have at least one vertex with a triangular horospherical cross section not represented in $\aceofbase$. So the decomposition of $M$ must only involve tetrahedra of types TTTT, OOTT, and OTTT. We take the numbers of these to be $a$, $b$ and $c$, respectively. 

Now suppose the decomposition of $M$ has $\sigma$ distinct octahedra. Since each octahedron has six ideal vertices, each with horospherical cross section a square, the peripheral tiling of $M$ has $d = 6\sigma$ squares. We thus find that:\begin{itemize}
	\item $d = 2b + 3c$, counting triangles of type OTT;
	\item $d = 2b$, counting triangles of type OOT; and
	\item $d = 3(4a + c)$, counting triangles of type TTT.\end{itemize}
The first two equations give that $c = 0$, whereupon we further obtain $a = d/12 = \sigma/2$ and $b = d/2 = 3\sigma$. In particular, $\sigma$ must be even.

But the only takeaway that we will use here is that every triangle of type OTT comes from a tetrahedron of type OOTT, since there are no tetrahedra of type OTTT (``$c=0$'' above). Consider a hexagon $H$ from the tiling of \Cref{fig_ace}, the union of six OTT triangles sharing a vertex. Each such triangle is the ideal vertex of an OOTT tetrahedron. For adjacent such tetrahedra $T_1$ and $T_2$, let $f$ be the triangular face that they share and let $v$ be the ideal vertex of $f$ that does not belong to the edge of $f$ with one endpoint at the rotation center of $H$. Let $B_v$ be the horoball of the packing from \Cref{packing} that is centered at $v$.

For each of $i=1,2$, the horospherical cross section $B_v\cap T_i$ is a triangle of type OOT in $B_v$, and the two cross-sectional triangles $B_v\cap T_1$ and $B_v\cap T_2$ meet each other across their ``T'' edges. But no such arrangement occurs in the tiling of \Cref{fig_ace}, where each OOT triangle intersects a TTT triangle across its ``T'' edge. This contradicts the hypothesis that $M$ is one-cusped, since every cross-sectional triangle of every polyhedron in its decomposition must lift to a tile of the Figure.
\end{proof}

\section{Classifying rotationally symmetric tilings with a short translation}\label{Pgh Regional Transit}

In this section, we classify tilings of $\mathbb{R}^2$ that are built from unit-sidelength squares and equilateral triangles, and have order-$3$ rotational symmetry and a translation of length at most $2+\sqrt{3}$. The section's main result \Cref{And then there were two} asserts that there are three of these, generated from fundamental domains in Figure \ref{dumb bell} as described in \Cref{embark}. We call them $\mathcal{Q}$ (which is also pictured on the left in Figure \ref{tiddly}), $\aceofbase$ (also in Figure \ref{fig_ace}), and $\totaleclipse$ (also in Figure \ref{fig_total_eclipse}). While the results here follow from classifications of more general plane tilings, we include a complete proof in the interests of keeping the paper self-contained. 

We first briefly relate our results to the general classification of plane tilings with a cocompact symmetry group. Such tilings are usually called \emph{$k$-uniform} and classified by $k$, defined to be the number of transitivity classes (ie.~orbits) of vertices under the action of the full symmetry group of the tiling. A close reading of our arguments below does lead to the conclusion that the tilings we are interested in appear in the list of $k$-uniform tilings for $k=2$ or $k=3$. The tiling $\mathcal{Q}$ is one of the twenty $2$-uniform tilings of the plane, and $\aceofbase$ and $\totaleclipse$ are each among the sixty-one $3$-uniform tilings of the plane (see \cite{MR0264511} for a classification of $2$-uniform tilings, \cite{MR0745905,chavey2014tilings} for a classification and relevant discussion of $3$-uniform tilings, and \cite{grunbaum1987tilings} for further background). In particular, these classifications often contain tilings that allow for tiles with more than four sides (e.g. hexagons and $12$-gons). Just as relevant, to apply those results we would also need show that the full symmetry groups contain both order $6$-rotations and reflections. While these conclusions can be inferred from the classification contained here, we present the argument this way because it more directly connects notations of translation length to an enumeration of valid peripheral tilings.

It also follows from this section that the tilings of the plane by equilateral triangles and squares with a rotation of order $3$ and shortest translation length \emph{equal to} $2+\sqrt{3}$ are exactly $\aceofbase$ (see \Cref{fig_ace})  and $\totaleclipse$ (see \Cref{fig_total_eclipse}). Alternative names for $\aceofbase$ include $[3^6;3^34^2;3^24.3.4]$.  
This notation classifies the cyclic orders of polygons incident to the three orbits of vertices: in this case, one vertex is incident to six triangles; a second, to three successive triangles followed by two squares; and the third to two triangles, one square, and a third triangle before being capped off by a final square. The notation for $\totaleclipse$ is $[3^6; 3^2 4.3.4; 3^2 4.3.4]$. 

The rest of the section leads up to the proof of \Cref{And then there were two}.

\begin{lemma}\label{chow-dah! It's chow-dah!}
Fix $t>0$ and suppose $\Lambda$ is a discrete, cocompact group of isometries of $\mathbb{R}^2$ containing rotations of order three, such that the minimal translation length of any infinite-order element of $\Lambda$ is $t$. Then:\begin{itemize}
    \item There is a tiling of $\mathbb{R}^2$ by equilateral triangles with sides of length $t/\sqrt{3}$, whose vertex set is the set of fixed points of order-three rotations in $\Lambda$. For any triangle $\Delta$ of this tiling every conjugacy class of order-three elements of $\Lambda$ has a representative fixing a vertex of $\Delta$.
    \item Every point of $\mathbb{R}^2$ is within a distance of $t/3$ from a fixed point of an order-three rotation belonging to $\Lambda$.
\end{itemize}
\end{lemma}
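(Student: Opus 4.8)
The plan is to analyze the structure of $\Lambda$ directly, using the fact that a discrete cocompact group of orientation-preserving and -reversing isometries of $\mathbb{R}^2$ with order-three rotations must be one of a small number of wallpaper groups. First I would reduce to understanding the subgroup $\Lambda^+$ of orientation-preserving isometries, which has index one or two; since $\Lambda$ contains order-three rotations, $\Lambda^+$ does too, and hence $\Lambda^+$ is one of the three wallpaper groups containing order-three rotations: the $(3,3,3)$-group $p3$, the $(2,3,6)$-group $p6$, or the $(2,3,3)$... wait, $p31m$ and $p3m1$ are not orientation-preserving. So $\Lambda^+ \in \{p3, p6\}$ — every orientation-preserving crystallographic group with $3$-torsion and no $4$- or $6$-torsion beyond this is conjugate into $p3$ or $p6$. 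In either case, the set $V$ of fixed points of order-three rotations forms (after appropriate normalization of the lattice) the vertex set of the standard tiling of $\mathbb{R}^2$ by equilateral triangles, where every vertex is a fixed point of an order-three (in the $p3$ case) or order-three-or-six (in the $p6$ case) rotation.

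The key quantitative step is to pin down the side length of this triangular tiling in terms of $t$. Let $\mathcal{L}$ be the full translation lattice of $\Lambda$ (equivalently of $\Lambda^+$). The shortest translation length is $t$, so the shortest nonzero vector of $\mathcal{L}$ has length $t$; I would argue that in a group with order-three rotations the lattice must be hexagonal, i.e. spanned by two vectors of equal length $t$ at angle $\pi/3$, because the rotation by $2\pi/3$ carries $\mathcal{L}$ to itself and acts freely on the set of shortest vectors in groups of three. Next, the fixed points of two distinct order-three rotations $r_1, r_2$ whose product is a translation: if $r_1$ is rotation by $2\pi/3$ about $p_1$ and $r_2$ is rotation by $-2\pi/3$ about $p_2$, then $r_2 r_1$ is a translation by a vector whose length is $\sqrt{3}$ times the distance $|p_1 - p_2|$. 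Choosing $r_1, r_2$ realizing a minimal-length translation $t$ shows that the minimal distance between fixed points of rotations with "opposite" rotation angle is $t/\sqrt{3}$; and one checks that the full set $V$ of order-three fixed points, together with these minimal-distance pairs, is exactly the vertex set of the equilateral triangulation with side $t/\sqrt{3}$. (In the $p3$ case there are three $\Lambda$-orbits of order-three fixed points, corresponding to the three vertices of a fundamental triangle; in the $p6$ case the $6$-centers and the two types of pure $3$-centers likewise distribute over the vertices of such a triangle. Either way, every conjugacy class of order-three elements has a representative fixing a vertex of any chosen $\Delta$ — conjugating by a translation of $\mathcal{L}$ moves any vertex to any other vertex in the same orbit, and the three orbits are precisely the three vertex types of $\Delta$.)

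For the second bullet, once we know $V$ is the vertex set of an equilateral triangulation with side $s = t/\sqrt{3}$, it suffices to observe that the circumradius of an equilateral triangle with side $s$ is $s/\sqrt{3} = t/3$, so every point of $\mathbb{R}^2$ — lying in some closed triangle of the tiling — is within $t/3$ of one of that triangle's vertices, hence within $t/3$ of a point of $V$, i.e. of a fixed point of an order-three rotation in $\Lambda$.

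The main obstacle I anticipate is the bookkeeping in the paragraph above that identifies $V$ precisely with the equilateral vertex set and simultaneously tracks conjugacy classes: one must be careful that $V$ contains \emph{only} those points (no extra order-three centers sneaking in at, say, edge midpoints or triangle centers — in $p3$ the triangle centers are genuinely not $3$-centers, while in $p6$ they are, but there they coincide with vertices of the \emph{finer} relevant triangulation, so one must be sure to use the correct triangulation), and that the minimal translation length $t$ is correctly matched to the lattice rather than to $\sqrt{3}\,s$ or $s$ through an off-by-$\sqrt{3}$ error. I would handle this by explicitly writing $\Lambda^+$ in coordinates in each of the two cases, computing $V$ and the shortest translation by hand, and checking the circumradius claim; this is routine but is where a careless argument would go wrong. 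If one prefers to avoid case analysis, an alternative is to invoke a standard classification of crystallographic groups with $3$-torsion (e.g. from a reference on wallpaper groups) and extract both bullets as immediate corollaries, but the self-contained computation is short enough to include.
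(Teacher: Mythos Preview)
Your proposal is correct and follows essentially the same approach as the paper: both reduce to the cases $\Lambda^+ \cong p3$ or $p6$, relate the side length of the equilateral triangulation of order-three centers to the minimal translation length via the product of two order-three rotations (yielding the factor $\sqrt{3}$), and deduce the second bullet from the circumradius formula. The paper's argument is slightly more concrete---it names generators $a,b$ and exhibits $bab$ as a realizing translation---while you frame the same computation in terms of the composition $r_2 r_1$, but the content is the same.
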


\begin{proof} This is a consequence of the well known classification of groups of Euclidean isometries. First, any group $\Lambda$ satisfying the hypotheses is either a $(2,3,6)$- or $(3,3,3)$-rotation group. In the latter case, this means that there is a $\Lambda$-invariant tiling of $\mathbb{R}^2$ by equilateral triangles, with a fundamental domain for the $\Lambda$-action that is the union of two adjacent tiles, such that $\Lambda$ is generated by order-three rotations fixing the endpoints of the shared edge. Taking $a$ and $b$ to be distinct, counterclockwise such rotations, their product $ab$ fixes the third vertex of one of the two fundamental domain tiles and represents the final conjugacy class of order-three elements of $\Lambda$. 

The fourth fundamental domain vertex is fixed by $ba$, which is conjugate to $ab$. For every triangle of the $\Lambda$-invariant tiling, it follows that its three vertices are fixed by order-three rotations representing the three distinct conjugacy classes in $\Lambda$. Any $\lambda\in\Lambda$ carries the fixed point of a rotation $\rho\in\Lambda$ to the fixed point of $\lambda\rho\lambda^{-1}$. It follows that if $\lambda$ is a translation, its translation length must be at least the minimum distance between fixed points of rotations belonging to the same conjugacy class. If the tiles have side length $x$, this distance equals $x\sqrt{3}$, which is the distance between the fixed points of $ab$ and $ba$. Note that the lower bound is realized by the translation $bab$.

This implies the Lemma's conclusion (1) in the case that $\Lambda$ is a $(3,3,3)$-rotation group. Conclusion (2) follows by observing that in an equilateral triangle with sides of length $x$, the point at maximum distance from the set of vertices is the triangle's centroid, which is at distance $x/\sqrt{3}$ from each of them. Thus if $x = t/\sqrt{3}$, this distance is $t/3$, and the Lemma is proved in the case that $\Lambda$ is a $(3,3,3)$-rotation group.

In the case that $\Lambda$ is a $(2,3,6)$-rotation group, it has a unique index-two subgroup $\Lambda_0$, which is a $(3,3,3)$-rotation group. In this case, $\Lambda$ also leaves invariant the $\Lambda_0$-invariant tiling described above, and there is a choice of two-tile fundamental domain $F_0$ for $\Lambda_0$ such that $\Lambda$ is generated by an order-three rotation fixing the shared edge and an order-two rotation acting on it by reflection. Dividing $F_0$ in half by a perpendicular bisector of this edge thus yields a fundamental domain for $\Lambda$. Calling the order-three rotation $a$ and the order-two rotation $c$, we have that $b$ from before equals $cac$, and $ac$ is an order-six rotation with the same fixed point as $ab = (ac)^2$. It follows that the order-three fixed points of $\Lambda$ are those of $\Lambda_0$, and conclusions (1) and (2) follow from the previous case.
\end{proof}

\begin{lemma}\label{rotation center position}
 If a tiling $\mathcal{T}$ of $\mathbb{C}$ into squares and equilateral triangles of side length $1$ has an order $3$ rotational symmetry $r$, then the fixed point of $r$ is either the centroid of a triangle in $\mathcal{T}$ or a vertex  of a triangle in $\mathcal{T}$ that is not incident to any square of $\mathcal{T}$. In the latter case, six triangles of $\mathcal{T}$ meet at this vertex.
 \end{lemma}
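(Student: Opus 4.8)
The claim is a local statement about what a tiling by unit triangles and squares can look like near the fixed point $p$ of an order-three rotation $r$. The natural approach is a case analysis on where $p$ sits relative to the cells of $\mathcal{T}$: either $p$ lies in the interior of a tile, in the interior of an edge, or at a vertex. The plan is to rule out all the possibilities except the two asserted in the statement.

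\emph{First}, suppose $p$ is in the interior of a tile $\tau$. Since $r$ preserves $\mathcal{T}$ and fixes $p$, it must map $\tau$ to itself; but then $r$ is an order-three symmetry of $\tau$ fixing an interior point, which forces $\tau$ to be a triangle (a square has no order-three symmetry) and $p$ to be its centroid. This yields the first alternative. \emph{Second}, suppose $p$ lies in the relative interior of an edge $e$ of $\mathcal{T}$. Then $r$ must carry $e$ to an edge through $p$, and the $r$-orbit of $e$ consists of three edges meeting at $p$ making angles of $2\pi/3$; but $p$ is an interior point of $e$, so $e$ and $r(e)$ would have to be collinear half-edges emanating in opposite directions from $p$, contradicting the $2\pi/3$ angle. (Equivalently: an interior edge point has exactly two edges of $\mathcal{T}$ through it, namely the two halves of $e$, and an order-three rotation cannot permute a two-element set of rays through its center without fixing one, which is impossible for distinct rays.) So this case does not occur.

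\emph{Third}, then, $p$ must be a vertex $v$ of $\mathcal{T}$. Let $k$ be the number of tiles of $\mathcal{T}$ meeting at $v$, with interior angles summing to $2\pi$; since the only angles available are $\pi/3$ (triangle) and $\pi/2$ (square), we have $\frac{\pi}{3}a + \frac{\pi}{2}b = 2\pi$ where $a$ is the number of triangles and $b$ the number of squares incident to $v$, i.e. $2a + 3b = 12$. The rotation $r$ permutes the cyclic sequence of tiles around $v$ by a rotation by $2\pi/3$, hence this cyclic arrangement must be invariant under a shift of order three; in particular the multiset of incident tiles is a union of three identical blocks, so $3 \mid a$ and $3 \mid b$. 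Combined with $2a+3b=12$ and $a,b \geq 0$, the solutions are $(a,b) = (6,0)$ and $(a,b) = (0,4)$. The configuration $(0,4)$ — four squares around $v$ — is rotation-invariant only under a rotation of order dividing four, not three (the cyclic word "square, square, square, square" has an order-three symmetry trivially, but then $r$ of order three acting on four equal $\pi/2$ sectors is impossible since $3 \nmid 4$; more carefully, $r$ would have to send the first square to the "second-and-a-bit" position, which is not a tile boundary). So only $(a,b)=(6,0)$ survives: six triangles meet at $v$, and $v$ is incident to no square. This is exactly the second alternative, and it completes the proof.

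\textbf{Expected main obstacle.} The genuinely delicate point is the vertex case, specifically making rigorous the assertion that an order-three rotation permuting the ``pie slices'' at $v$ forces the incidence pattern to be three-fold periodic around $v$, and then correctly eliminating $(a,b)=(0,4)$. One must be careful that $r$ does not merely permute tiles but acts on the circle of directions at $v$ by an honest rotation by $2\pi/3$, so the partition of that circle into tile-sectors (of angle $\pi/3$ or $\pi/2$) must be carried to itself by this rotation; since $2\pi/3$ is not an integer multiple of either sector angle in a way compatible with a mixed pattern, a short divisibility argument closes it. The other two cases (interior of a tile, interior of an edge) are essentially immediate once set up correctly.
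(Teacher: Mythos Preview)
Your proof is correct and follows the same three-case analysis as the paper (interior of tile, interior of edge, vertex). One harmless slip: once you have established $3\mid a$ and $3\mid b$, the pair $(a,b)=(0,4)$ already fails $3\mid b$, so your subsequent elimination of the four-squares configuration is unnecessary---the only solution is $(6,0)$ directly. The paper's vertex argument is phrased slightly differently (if a square is incident to $p$ then three squares are, forcing all four tiles at $p$ to be squares, whence $r$ must fix one of them), but your divisibility-and-angle-count version is equally valid and arguably cleaner.
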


 \begin{proof}
Let $p$ be the center of an order $3$ rotation $r$ of $\mathcal{T}$. Then $p$ is either a vertex of $\mathcal{T}$ or lies inside in the interior of an edge of $\mathcal{T}$ or lies inside the interior of a tile of $\mathcal{T}$. 

Now, $p$ cannot lie inside the interior of an edge of $\mathcal{T}$ otherwise $r$ would send that edge to another edge containing $p$ in the interior. But, $p$ cannot lie inside the interior of another edge of $\mathcal{T}$. 

If $p$ lies inside the interior of a tile then $r$ would have to send that tile to itself by acting as a $3$-cycle on its vertices. This would mean that the tile cannot be a square tile as otherwise it would contradict that its fixed point $p$ is in the interior. When the tile is triangular, $r$ would have to fix its centroid implying $p$ is the centroid.

If $p$ is a vertex of $\mathcal{T}$, $r$ cannot fix any tile that contains $p$ as a vertex. But, if $p$ is incident to a square tile, then $r$ would act as a $3$-cycle on the square tiles that contain $p$ as a vertex. This forces $p$ to be incident to only square tiles and as a consequence $r$ will have to fix a square tile incident to $p$, a contradiction. 

So, this shows that $p$ is either the centroid of a triangular tile or a vertex of $\mathcal{T}$ that is not incident to any square tile. In the latter case, $p$ is then necessarily the shared vertex of six triangular tiles of $\mathcal{T}$.
\end{proof}

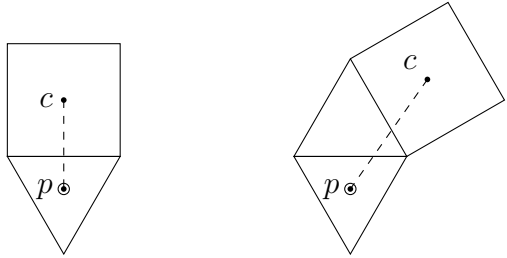
\begin{figure}[ht]
\begin{tikzpicture}

\begin{scope}[scale=0.75, xshift=-1in]

\draw (0,0) -- (2,0) -- (1,-1.732) -- cycle;
\draw (0,0) -- (2,0) -- (2,2) -- (0,2) -- cycle;

\fill (1,1) circle [radius=0.05];
\node [left] at (1,1) {$c$};

\fill (1,-0.577) circle [radius=0.05];
\draw (1,-0.577) circle [radius=0.1];
\node [left] at (1,-0.577) {$p$};
\draw [dashed] (1,1) -- (1,-0.577);

\end{scope}

\begin{scope}[scale=0.75, xshift=1in]

\draw (0,0) -- (2,0) -- (1,-1.732) -- cycle;
\draw (0,0) -- (2,0) -- (1,1.732) -- cycle;
\draw (2,0) -- (3.732,1) -- (2.732,2.732) -- (1,1.732);

\fill (2.366,1.366) circle [radius=0.05];
\node [above left] at (2.366,1.366) {$c$};

\fill (1,-0.577) circle [radius=0.05];
\draw (1,-0.577) circle [radius=0.1];
\node [left] at (1,-0.577) {$p$};
\draw [dashed] (2.366,1.366) -- (1,-0.577);

\end{scope}

\end{tikzpicture}
\caption{Possible nearest rotation fixed points to square centers.}
\label{nearest, dearest}
\end{figure}

\begin{lemma}\label{I'll kill you all!}
Suppose $\Lambda$ is a discrete, cocompact group of isometries of $\mathbb{R}^2$ containing rotations of order three that leaves invariant a tiling $\mathcal{T}$ of $\mathbb{C}$ into squares and equilateral triangles of side length $1$, and such that the minimal translation length of any infinite-order element of $\Lambda$ is at most $2+\sqrt{3}$. For a square tile of $\mathcal{T}$, referring to its center (ie.~the point equidistant from its vertices) as $c$, let $p$ be a nearest fixed point to $c$ of an order-three rotation belonging to $\Lambda$. The union of the tiles of $\mathcal{T}$ that intersect the line segment joining $p$ to $c$ is isometric to one of the two configurations of Figure \ref{nearest, dearest}.
\end{lemma}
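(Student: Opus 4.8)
The plan is to combine the constraint on the distance from $c$ to $p$ coming from \Cref{chow-dah! It's chow-dah!}(2) (applied to $\Lambda$ with $t \le 2+\sqrt{3}$) with the rigidity of how squares and equilateral triangles can fit together in $\mathcal{T}$, and with the dichotomy for the location of $p$ provided by \Cref{rotation center position}. First I would note that by \Cref{chow-dah! It's chow-dah!}(2), every point of $\mathbb{R}^2$---in particular $c$---is within distance $t/3 \le (2+\sqrt{3})/3$ of a fixed point of an order-three rotation in $\Lambda$, so the nearest such point $p$ satisfies $|p - c| \le (2+\sqrt{3})/3 \approx 1.244$. By \Cref{rotation center position}, $p$ is either the centroid of a triangle of $\mathcal{T}$ or the common vertex of six triangles of $\mathcal{T}$; I would treat these two cases separately, and in each case enumerate how the segment $\overline{pc}$ can cross the tiles between $p$ and the fixed square.

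Next, for the segment-crossing analysis: the distance from the center $c$ of a unit square to the nearest point of that square's boundary is $1/2$, so the segment $\overline{pc}$ exits the square through one of its edges, entering the tile on the other side of that edge. I would organize the argument by walking from $c$ outward along $\overline{pc}$ and recording the sequence of tiles met, using the bound $|p-c| \le 1.244$ to cut off the enumeration: since the square already accounts for at least $1/2$ of this length along a generic direction, there is room for only a bounded amount of further tile-crossing, and in fact the two pictures in \Cref{nearest, dearest} correspond exactly to the minimal ways to reach a valid rotation center (a triangle centroid at distance $(1+\sqrt{3})/\sqrt{3} \cdot \tfrac{1}{\sqrt 3}$-ish, respectively a degree-six vertex) without an intervening order-three rotation center. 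Here I would use that a triangle adjacent to the square across an edge has its centroid at distance $1/(2\sqrt{3})$ beyond that edge; two such configurations (square--triangle, giving the left picture where $p$ is the far centroid; and square--triangle--triangle, giving the right picture where $p$ is a degree-six vertex) exhaust the cases within the distance budget, and any longer chain either overshoots $1.244$ or passes through a closer order-three fixed point, contradicting minimality of $p$.

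The step I expect to be the main obstacle is the careful case analysis ruling out all other local configurations: I must show that whenever $\overline{pc}$ crosses into a \emph{square} on the far side of an edge (rather than a triangle), or crosses a longer chain of triangles, either the would-be rotation center $p$ is too far from $c$, or there is in fact a closer order-three fixed point forcing a different, shorter segment. This requires knowing precisely which vertices and centroids along each candidate chain are fixed by order-three rotations of $\Lambda$---and here \Cref{rotation center position} is the key tool, since it says a rotation center can only be a triangle centroid or an all-triangle degree-six vertex, so squares contribute no candidate centers and a triangle sharing an edge with a square cannot be the center of a degree-six all-triangle vertex. I would also invoke \Cref{chow-dah! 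It's chow-dah!}(1): the triangular lattice of order-three fixed points has sidelength $t/\sqrt{3} \le (2+\sqrt{3})/\sqrt{3}$, which further constrains how far $p$ can be and pins down the two configurations. Assembling these constraints, the union of crossed tiles is forced to be one of the two pictures in \Cref{nearest, dearest}, completing the proof.
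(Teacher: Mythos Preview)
Your overall strategy---bound $|p-c|$ by $(2+\sqrt{3})/3$ via \Cref{chow-dah! It's chow-dah!}(2), invoke the dichotomy of \Cref{rotation center position}, then enumerate tile chains---matches the paper's. But you have misread the two target configurations, and this leads to a genuine gap.

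You describe the right-hand picture of \Cref{nearest, dearest} as ``square--triangle--triangle, \dots\ where $p$ is a degree-six vertex.'' In fact, in \emph{both} configurations of that figure $p$ is the \emph{centroid} of a triangle; the paper's very first step after the distance bound is to rule out the six-triangle-vertex case entirely. The argument is: if $p$ were such a vertex, an open disk of radius $\sqrt{3}/2$ around $p$ lies in the six triangles, while an open disk of radius $1/2$ around $c$ lies in the square, forcing $|p-c|\ge 1/2+\sqrt{3}/2>1.366$, which exceeds $(2+\sqrt{3})/3$. So the case you assigned to the right-hand picture does not occur at all. The right-hand configuration is instead: $p$ is the centroid of a triangle $T$, the square containing $c$ is \emph{not} adjacent to $T$, and the intermediate tile $T_1$ (adjacent to $T$, meeting $[p,c]$) is a triangle, with $c$ the center of the square $T_2$ beyond it.

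Because of this misidentification, your sketch is missing the substantive work: once $p$ is known to be a centroid of a triangle $T$, one must (i) show the tile $T_1$ adjacent to $T$ meeting $[p,c]$ cannot be a square---here the paper uses that the order-three rotation at $p$ forces the tiles flanking $T_1$ to be triangles, and then computes that the exit distance exceeds $(2+\sqrt{3})/3$; and (ii) show that if $T_1$ is a triangle, the next tile $T_2$ cannot also be a triangle---again using rotation-invariance at $p$ to force six triangles at a shared vertex, then a distance estimate. Your appeal to ``a closer order-three fixed point'' to cut off other chains is not how these cases are eliminated; the paper uses the rotational symmetry about $p$ to constrain the local combinatorics and then direct distance computations.
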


\begin{proof}
By Lemma \ref{chow-dah! It's chow-dah!}, the distance from $c$ to $p$ is at most $(2+\sqrt{3})/3 < 1.245$. By Lemma \ref{rotation center position}, $p$ is either the centroid of a triangle of $\mathcal{T}$ or the shared vertex of six triangles of $\mathbb{T}$. For an equilateral triangle with sidelength $1$, the shortest distance from a vertex to the side opposite it is $\sqrt{3}/2$. It follows that if $p$ is the shared vertex of six equilateral triangles then there is an open disk of radius $\sqrt{3}/2$ centered at $p$ and contained in their union. Since there is an open disk of radius $1/2$ centered at $c$ and contained in the interior of the square tile containing it, the distance from $p$ to $c$ would be at least $1/2+\sqrt{3}/2 > 1.366$ in this case. But this is larger than $(2+\sqrt{3})/3$.

Therefore $p$ is the centroid of a triangle $T$ of $\mathcal{T}$. If the square tile containing $c$ is adjacent to $T$ then the union of tiles intersecting the line segment $[p,c]$ from $p$ to $c$ is as pictured on the left in Figure \ref{nearest, dearest}. So let us assume that this is not the case, and let $T_1$ be a tile of $\mathcal{T}$ adjacent to $T$ and intersecting $[p,c]$. We claim first that $T_1$ is not a square.

Supposing that $T_1$ is a square, we observe first that neither tile of $\mathcal{T}$ that intersects $T_1$ along an edge that shares a single vertex with $T_1\cap T$ is a square, since this is not compatible with the existence of an order-three rotation fixing $T$ and preserving $\mathcal{T}$. Therefore both such tiles are triangles, and the union of $T$, $T_1$, and these two is pictured on the left in Figure \ref{yacketty yak}.

\begin{figure}[ht]
\begin{tikzpicture}

\begin{scope}[scale=0.65, xshift=-1.5in]

\draw (0,0) -- (2,0) -- (1,-1.732) -- cycle;
\node [above left] at (1,-0.577) {\small $p$};
\node [below] at (1,-0.577) {$T$};
 
\draw (0,0) -- (2,0) -- (2,2) -- (0,2) -- cycle;
\node at (1,1) {$T_1$};

\draw (0,0) -- (0,2) -- (-1.732,1) -- cycle;
\draw (2,0) -- (2,2) -- (3.732,1) -- cycle;

\draw [dashed] (1,-0.577) -- (-3,1.732);
\draw [dashed] (1,-0.577) -- (5,1.732);
\fill (1,-0.577) circle [radius=0.05];
\draw (1,-0.577) circle [radius=0.1];

\end{scope}

\begin{scope}[scale=0.65, xshift=1.5in]

\draw (0,0) -- (2,0) -- (1,-1.732) -- cycle;
\node [above left] at (1,-0.577) {\small $p$};
\node [below] at (1,-0.577) {$T$};

\draw (0,0) -- (2,0) -- (1,1.732) -- cycle;
\node at (1,0.577) {$T_1$};

\draw (1,1.732) -- (3,1.732) -- (2,0);
\node at (2,1.155) {$T_2$};

\draw (3,1.732) -- (4,0) -- (2,0);
\fill (2,0) circle [radius=0.05];
\node [below] at (2,0) {\small $v$};

\draw [dashed] (1,-0.577) -- (5,1.732);
\fill (1,-0.577) circle [radius=0.05];
\draw (1,-0.577) circle [radius=0.1];

\end{scope}

\end{tikzpicture}
\caption{Two forbidden configurations.}
\label{yacketty yak}
\end{figure}
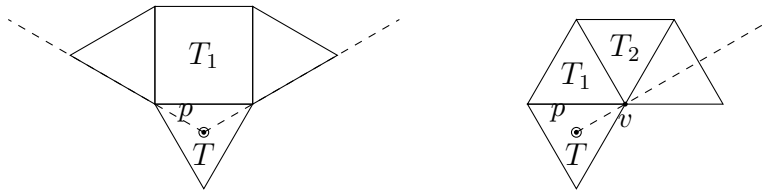

Since the line segment $[p,c]$ intersects the edge $T\cap T_1$, it exits the pictured union of triangles and squares within the cone defined by the dashed lines in the figure. However, the minimum distance from $p$ to the union of edges in the boundary of this region is $1+1/(2\sqrt{3}) > 1.288$, realized by the arc joining $p$ to the midpoint of the edge of $T_1$ opposite $T_1\cap T$. This is already larger than $(2+\sqrt{3})/3$, a contradiction that proves the claim.

Recall that the claim asserted that if $T_1$ does not contain $c$ then it is a triangle. We now assume this is the case and let $T_2\ne T_1$ be the tile of $\mathcal{T}$ that shares an edge with $T_1$ and intersects $[p,c]$. Let $v$ be the vertex of $T$ at which $T\cap T_1$ and $T_1\cap T_2$ meet. If $T_2$ were a triangle then by the rotation-invariance of $T$, $v$ would be contained in at least four---hence exactly six---triangles. Then $[p,c]$ would exit the union of the triangles containing $v$ in either $T_2$ or the unlabeled triangle on the right in Figure \ref{yacketty yak}. The distance from $p$ to the closest point of the edge of either of these triangles is $\sqrt{3}/2+1/(2\sqrt{3}) > 1.154$. The length of $[p,c]$ is at least $1/2$ larger than this, since this is a lower bound on the length of its intersection with the square tile containing $c$. Since this is larger than $(2+\sqrt{3})/3$, $T_2$ cannot be a triangle.

Therefore in the case that $T_1$ does not contain $c$, it is a triangle and $T_2\ne T$---the tile of $\mathcal{T}$ that shares an edge with $T_1$ and intersects $[p,c]$---is a square. By arguing as in the case of the paragraph above we can show that if $c$ were not the center of $T_2$ then $[p,c]$ would be too long; so $c$ is the center of $T_2$ and, up to isometry, the configuration $T\cup T_1\cup T_2$ is as pictured on the right in Figure \ref{nearest, dearest}.\end{proof}

\begin{figure}[ht]
\begin{tikzpicture}

\begin{scope}[scale=0.5, xshift=-2in]

\fill [opacity=0.1] (0,0) -- (2,0) -- (1,-1.732) -- cycle;
\fill [opacity=0.1] (0,0) -- (2,0) -- (2,2) -- (0,2) -- cycle;
\fill [opacity=0.1] (2,0) -- (3.732,-1) -- (2.732,-2.732) -- (1,-1.732);
\fill [opacity=0.1] (0,0) -- (-1.732,-1) -- (-0.732,-2.732) -- (1,-1.732);

\draw (0,0) -- (2,0) -- (1,-1.732) -- cycle;
\draw (0,0) -- (2,0) -- (2,2) -- (0,2) -- cycle;
\draw (2,2) -- (3.732,1) -- (2,0) -- (3.732,-1) -- (3.732,1);
\draw (0,2) -- (-1.732,1) -- (0,0) -- (-1.732,-1) -- (-1.732,1);
\draw (3.732,-1) -- (2.732,-2.732) -- (1,-1.732) -- (1,-3.732) -- (2.732,-2.732);
\draw (-1.732,-1) -- (-0.732,-2.732) -- (1,-1.732);
\draw (-0.732,-2.732) -- (1,-3.732);

\fill (1,-0.577) circle [radius=0.05];
\draw (1,-0.577) circle [radius=0.1];
\node [left] at (1,-0.577) {\small $p$};

\end{scope}

\begin{scope}[scale=0.5, xshift=2in]

\fill [opacity=0.1] (0,0) -- (2,0) -- (1,-1.732) -- cycle;
\fill [opacity=0.1] (0,0) -- (2,0) -- (1,1.732) -- cycle;
\fill [opacity=0.1] (2,0) -- (3.732,1) -- (2.732,2.732) -- (1,1.732);
\fill [opacity=0.1] (2,0) -- (3,-1.732) -- (3,-3.732) -- (1,-3.732) -- (1,-1.732);
\fill [opacity=0.1] (1,-1.732) -- (-1,-1.732) -- (-2.732,-0.732) -- (-1.732,1) -- (0,0);

\draw (0,0) -- (2,0) -- (1,-1.732) -- cycle;
\draw (0,0) -- (2,0) -- (1,1.732) -- cycle;
\draw (2,0) -- (3.732,1) -- (2.732,2.732) -- (1,1.732);
\draw (2,0) -- (3,-1.732) -- (-1,-1.732) -- (0,0);
\draw (3.732,1) -- (4.732,-0.732) -- (3,-1.732) -- (4.732,-2.732) -- (4.732,-0.732);
\draw (4.732,-2.732) -- (3,-3.732) -- (3,-1.732);
\draw (3,-3.732) -- (-1,-3.732) -- (-1,-1.732) -- (-2.732,-2.732) -- (-1,-3.732);
\draw (1,-1.732) -- (1,-3.732);
\draw (-2.732,-2.732) -- (-2.732,-0.732) -- (-1,-1.732);
\draw (-2.732,-0.732) -- (-0.732,2.732) -- (1,1.732) -- (1,3.732) -- (2.732,2.732);
\draw (-0.732,2.732) -- (1,3.732);
\draw (0,0) -- (-1.732,1);

\fill (1,-0.577) circle [radius=0.05];
\draw (1,-0.577) circle [radius=0.1];
\node [left] at (1,-0.577) {\small $p$};

\end{scope}

\end{tikzpicture}
\caption{Rotation-Invariant Square Carriers.}
\label{blobby blobs}
\end{figure}
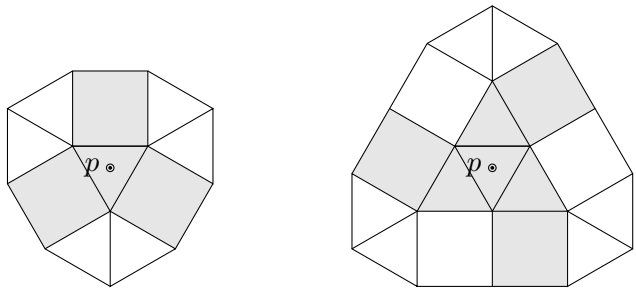

\begin{corollary}\label{Especially those of you on the jury.}
    Suppose $\Lambda$ is a discrete, cocompact group of isometries of $\mathbb{R}^2$ containing rotations of order three that leaves invariant a tiling $\mathcal{T}$ of $\mathbb{C}$ into squares and equilateral triangles of side length $1$, and such that the minimal translation length of any infinite-order element of $\Lambda$ is at most $2+\sqrt{3}$. Every square tile of $\mathcal{T}$ is contained in a region isometric to one of the two pictured in Figure \ref{blobby blobs}, with a fixed point of a rotation of $\Lambda$ at its center.
\end{corollary}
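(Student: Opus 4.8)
The plan is to build the region pictured in \Cref{blobby blobs} outward from its center, using \Cref{I'll kill you all!} to pin down one ``spoke'', the order-three rotation to replicate it three times, and a single angle count at the vertices of the central triangle to fill in the rest. First I would fix a square tile $s$ of $\mathcal{T}$, let $c$ be its center, let $p$ be a nearest fixed point to $c$ of an order-three rotation in $\Lambda$, and let $r\in\Lambda$ be an order-three rotation with $r(p)=p$. By \Cref{rotation center position} (as recorded in the proof of \Cref{I'll kill you all!}), $p$ is the centroid of a triangle $T$ of $\mathcal{T}$; and by \Cref{I'll kill you all!} the union of the tiles meeting the segment $[p,c]$ is isometric to the left or the right configuration of \Cref{nearest, dearest}. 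Since $r$ fixes $p$, which lies in the interior of $T$, and tiles have disjoint interiors, $r$ fixes $T$; being of order three it then permutes the three edges of $T$ (equivalently its three vertices) in a single $3$-cycle. Applying $r$ and $r^2$ to the configuration from \Cref{I'll kill you all!} therefore places a congruent copy of that spoke on each of the three edges of $T$, and because $r\in\Lambda$ preserves $\mathcal{T}$ every tile of all three spokes is a genuine tile of $\mathcal{T}$.

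Next I would fill in the tiles around the three vertices of $T$ by an angle count. In the \emph{adjacent} case (left of \Cref{nearest, dearest}) each edge of $T$ carries a square, so at each vertex $v$ of $T$ the tiles $T$, square, square subtend $60^\circ+90^\circ+90^\circ=240^\circ$, leaving a gap of exactly $120^\circ$; as every tile of $\mathcal{T}$ is a unit square or unit equilateral triangle, that gap can only be two triangles. In the \emph{non-adjacent} case (right of \Cref{nearest, dearest}) each edge of $T$ carries a triangle $T_1$, and each such $T_1$ carries a square along the edge joining its apex to a vertex of $T$ --- so that square contains that vertex of $T$; by $r$-equivariance the three squares $s,r(s),r^2(s)$ are attached at the three \emph{distinct} vertices of $T$, and hence at each vertex $v$ of $T$ the tiles $T$, two triangles, one square subtend $270^\circ$, leaving a gap of exactly $90^\circ$, which can only be one square. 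In either case the union of $T$, the three spokes, and these forced filler tiles is a tiled region isometric (allowing reflections, which matter only in the chiral right-hand case) to one of the two in \Cref{blobby blobs}, with $p$, a fixed point of $r\in\Lambda$, at its center. If the pictured regions are meant to include a further layer of tiles, I would iterate: the same angle count, now carried out at the vertices of the filler tiles just added, forces the remaining pictured tiles.

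The genuine content here is \Cref{I'll kill you all!}, which is already proved; everything afterward is bookkeeping. The point that needs care --- and the place I would expect a subtlety --- is verifying that each angular gap produced along the way is exactly $90^\circ$ or $120^\circ$, hence \emph{uniquely} fillable, rather than $150^\circ$ or $180^\circ$, where two distinct fillings are a priori available. The computation above shows this holds at the first layer around $T$. Should a wider gap arise in an outer layer, I would eliminate the bad option by re-invoking \Cref{I'll kill you all!} for the square tile adjacent to that gap, or by applying the translation-length bound through \Cref{chow-dah! It's chow-dah!} to the rotation center nearest that square.
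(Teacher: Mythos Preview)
Your proposal is correct and follows essentially the same approach as the paper: apply \Cref{I'll kill you all!} to pin down one spoke, replicate it under the order-three rotation fixing $p$ to obtain the shaded region of \Cref{blobby blobs}, and then observe that the remaining tiles are forced by the local combinatorics. The paper's own proof is terser---it simply asserts that ``the remainder of each neighborhood in the Figure is then determined by the shaded region and the geometry of the two tile types''---whereas you spell out the angle count at the vertices of $T$ explicitly and note the need to iterate outward; but the underlying argument is identical.
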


\begin{proof} Fix a square tile $S$ of $\mathcal{T}$, and call its center $c$. By Lemma \ref{I'll kill you all!}, up to isometry $S$ is the square tile in one of the two configurations pictured in Figure \ref{nearest, dearest}, where $p$ is a nearest fixed point to $c$ of an order-three rotation $\lambda\in\Lambda$. The union of this configuration with its images under $\lambda$ and $\lambda^{-1}$ is one of the two shaded regions in Figure \ref{blobby blobs}. The remainder of each neighborhood in the Figure is then determined by the shaded region and the geometry of the two tile types in $\mathcal{T}$.\end{proof}

\begin{notation} For the rest of this section, we will refer to the neighborhoods pictured in Figure \ref{blobby blobs} as \textit{Rotation-Invariant Square Carriers}, or \textit{RISC}s for short.
\end{notation}

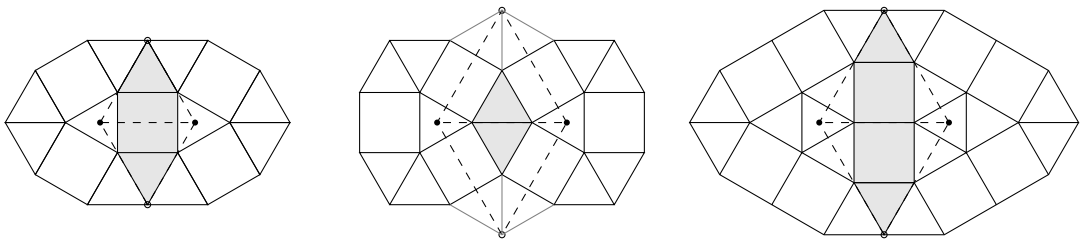
\begin{figure}[ht]
\begin{tikzpicture}

\begin{scope}[scale=0.4,xshift=-1.5in]

\draw (0,1.732) -- (1.732,2.732) -- (2.732,1) -- (1,0) -- (2.732,-1) -- (1.732,-2.732) -- (0,-1.732) -- (-1.732,-2.732) -- (-2.732,-1) -- (-1,0) -- (-2.732,1) -- (-1.732,2.732) -- cycle;
\draw (1.732,2.732) -- (3.732,2.732) -- (2.732,1) -- (4.732,1) -- (3.732,2.732);
\draw (1.732,-2.732) -- (3.732,-2.732) -- (2.732,-1) -- (4.732,-1) -- (3.732,-2.732);
\draw (-1.732,2.732) -- (-3.732,2.732) -- (-2.732,1) -- (-4.732,1) -- (-3.732,2.732);
\draw (-1.732,-2.732) -- (-3.732,-2.732) -- (-2.732,-1) -- (-4.732,-1) -- (-3.732,-2.732);
\draw (2.732,1) -- (2.732,-1);
\draw (4.732,1) -- (4.732,-1);
\draw (-2.732,1) -- (-2.732,-1);
\draw (-4.732,1) -- (-4.732,-1);

\fill [opacity=0.1] (-1,0) -- (0,1.732) -- (1,0) -- (0,-1.732) -- cycle;
\draw (-1,0) -- (0,1.732) -- (1,0) -- (0,-1.732) -- (-1,0) -- (1,0);

\fill (2.155,0) circle [radius=0.1];
\fill (-2.155,0) circle [radius=0.1];
\draw (0,3.732) circle [radius=0.1];
\draw (0,-3.732) circle [radius=0.1];
\draw [dashed] (2.155,0) -- (0,3.732) -- (-2.155,0) -- (0,-3.732) -- (2.155,0) -- (-2.155,0);

\draw [color=gray] (-1.732,2.732) -- (0,3.732) -- (1.732,2.732);
\draw [color=gray] (0,3.732) -- (0,1.732);
\draw [color=gray] (-1.732,-2.732) -- (0,-3.732) -- (1.732,-2.732);
\draw [color=gray] (0,-3.732) -- (0,-1.732);

\end{scope}

\begin{scope}[scale=0.8, xshift=-4in]
\fill[opacity=0.1] (1.866,0.5)--(2.366,1.366)--(2.866,0.5)--(2.866,-0.5)--(2.366,-1.366)--(1.866,-0.5)--cycle;
\draw (0,0) -- (1,0) -- (0.5,0.866) -- cycle;
\draw [xshift=1.866cm, yshift=0.5cm] (0,0) -- (1,0) -- (0.5,0.866) -- cycle;
\draw [xshift=3.732cm] (0,0) -- (1,0) -- (0.5,0.866) -- cycle;
\draw [xshift=1.366cm, yshift=-1.366cm] (0,0) -- (1,0) -- (0.5,0.866) -- cycle;
\draw [xshift=2.366cm, yshift=-1.366cm] (0,0) -- (1,0) -- (0.5,0.866) -- cycle;

\draw (0,0) -- (1,0) -- (0.5,-0.866) -- cycle;
\draw [xshift=3.732cm] (0,0) -- (1,0) -- (0.5,-0.866) -- cycle;
\draw [xshift=1.366cm, yshift=1.366cm] (0,0) -- (1,0) -- (0.5,-0.866) -- cycle;
\draw [xshift=2.366cm, yshift=1.366cm] (0,0) -- (1,0) -- (0.5,-0.866) -- cycle;

\draw (1.866,0.5) -- (1.866,-0.5) -- (2.866,-0.5) -- (2.866,0.5);

\draw (1,0) -- (1.866,0.5) -- (1.366,1.366) -- (0.5,0.866) -- cycle;
\draw [xshift=2.366cm, yshift=-1.366cm] (1,0) -- (1.866,0.5) -- (1.366,1.366) -- (0.5,0.866) -- cycle;

\draw (1,0) -- (1.866,-0.5) -- (1.366,-1.366) -- (0.5,-0.866) -- cycle;
\draw [xshift=2.366cm, yshift=1.366cm] (1,0) -- (1.866,-0.5) -- (1.366,-1.366) -- (0.5,-0.866) -- cycle;

\draw [dashed] (1.577,0)--(2.366,1.366)--(3.155,0)--(2.366,-1.366)--(1.577,0)--(3.155,0);
\fill (1.577,0) circle [radius=0.05];
\fill (3.155,0) circle [radius=0.05];
\draw (2.366,1.366) circle [radius=0.05];
\draw (2.366,-1.366) circle [radius=0.05];

\end{scope}

\begin{scope}[scale=0.4, xshift=3.5in]

\fill [opacity=0.1] (-1,2) -- (0,3.732) -- (1,2) -- (1,-2) -- (0,-3.732) -- (-1,-2) -- cycle;
\draw (-1,2) -- (1,2) -- (1,-2) -- (-1,-2) -- cycle;

\draw (-1,0) -- (1,0) -- (2.732,1) -- (1,2) -- (0,3.732) -- (2,3.732) -- (3.732,2.732) -- (2.732,1) -- (2.732,-1) -- (3.732,-2.732) -- (2,-3.732) -- (0,-3.732) -- (1,-2) -- (2.732,-1) -- (1,0);
\draw (-1,0) -- (-2.732,1) -- (-1,2) -- (0,3.732) -- (-2,3.732) -- (-3.732,2.732) -- (-2.732,1) -- (-2.732,-1) -- (-3.732,-2.732) -- (-2,-3.732) -- (0,-3.732) -- (-1,-2) -- (-2.732,-1) -- (-1,0);
\draw (-2,3.732) -- (-1,2) -- (1,2) -- (2,3.732);
\draw (-2,-3.732) -- (-1,-2) -- (1,-2) -- (2,-3.732);
\draw (2.732,1) -- (4.464,0) -- (5.464,1.732) -- (3.732,2.732);
\draw (2.732,-1) -- (4.464,0) -- (5.464,-1.732) -- (3.732,-2.732);
\draw (-2.732,1) -- (-4.464,0) -- (-5.464,1.732) -- (-3.732,2.732);
\draw (-2.732,-1) -- (-4.464,0) -- (-5.464,-1.732) -- (-3.732,-2.732);
\draw (4.464,0) -- (6.464,0) -- (5.464,1.732);
\draw (6.464,0) -- (5.464,-1.732);
\draw (-4.464,0) -- (-6.464,0) -- (-5.464,1.732);
\draw (-6.464,0) -- (-5.464,-1.732);

\fill (2.155,0) circle [radius=0.1];
\fill (-2.155,0) circle [radius=0.1];
\draw (0,3.732) circle [radius=0.1];
\draw (0,-3.732) circle [radius=0.1];
\draw [dashed] (2.155,0) -- (0,3.732) -- (-2.155,0) -- (0,-3.732) -- (2.155,0) -- (-2.155,0);

\end{scope}

\end{tikzpicture}

    \caption{Unions of RISC neighborhoods with centers at distance $\le 2+\sqrt{3}$; overlap shaded. Additional gray triangles in the middle figure belong to any tiling by equilateral triangles and squares which contains that configuration. The group generated by order-three rotations fixing the two RISC centers extends the pictured tilings to tilings of $\mathbb{R}^2$ that we respectively call $\mathcal{Q}$ (also pictured in \Cref{tiddly}), $\totaleclipse$ (also in \Cref{fig_total_eclipse}) and $\mathcal{R}$ (also in \Cref{fig_ace}).}
    \label{dumb bell}
\end{figure}

\begin{lemma}\label{And then there was shmoo}
Suppose $\Lambda$ is a discrete, cocompact group of isometries of $\mathbb{R}^2$ containing rotations of order three that leaves invariant a tiling $\mathcal{T}$ of $\mathbb{C}$ into squares and equilateral triangles of side length $1$---with each shape represented. If $\mathcal{T}$ contains a pair of distinct RISC neighborhoods with rotation centers at a distance less than $\ell= 2+1/\sqrt{3}$, then the union of these two neighborhoods forms one of the three configurations of Figure \ref{dumb bell}.
\end{lemma}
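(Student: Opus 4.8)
The plan is to reduce the statement to a finite combinatorial search, using the rigidity of RISC neighborhoods established in \Cref{Especially those of you on the jury.} together with one metric input. By \Cref{Especially those of you on the jury.}, each of the two RISCs is isometric to one of the two configurations $N_S$ (left) or $N_L$ (right) of \Cref{blobby blobs}, and has a fixed point of an order-three rotation of $\Lambda$ at its center; by \Cref{rotation center position} and \Cref{I'll kill you all!} that center is the centroid of the (unique) triangular tile at the center of the configuration. Since the RISC through a given tile is determined, distinct RISCs have distinct centers, so our two RISCs have centers $p_1\ne p_2$ with $0<|p_1-p_2|<\ell$.

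The metric input I would record first is the ``inradius'' of a RISC. A direct inspection of \Cref{blobby blobs} shows that $N_S$ contains the open disk of radius $\rho_S:=1+\tfrac{1}{2\sqrt 3}$ about its center, and that \emph{every} tile of $\mathcal{T}$ not belonging to $N_S$ lies at distance at least $\rho_S$ from that center -- the nearest such tiles being those across the three ``far'' edges of the squares of $N_S$. An entirely analogous (and easier, since $N_L$ is larger) check shows that every tile not in $N_L$ also lies at distance at least $\rho_S$ from its center. The point of the numerology is that $\ell=2+\tfrac{1}{\sqrt3}=2\rho_S$. Consequently, if $N_1,N_2$ are our two RISCs with centers $p_1,p_2$, then since $|p_1-p_2|<\ell=2\rho_S$ the open disks of radius $\rho_S$ about $p_1$ and about $p_2$ meet in a nonempty open set, which therefore contains the interior of some tile $\tau$ of $\mathcal{T}$. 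Since $\tau$ meets the disk of radius $\rho_S$ about $p_i$ it has distance less than $\rho_S$ from $p_i$, so by the inradius property $\tau\in N_i$ for $i=1,2$. Thus $N_1$ and $N_2$ share a tile.

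Now the finite search. The isometry type of $N_i$ together with the ``slot'' that the fixed tile $\tau$ occupies in $N_i$ pins down the position of $p_i$ relative to $\tau$; hence the isometry types of $N_1,N_2$ and the relative position of $p_1,p_2$ are determined by the pair (slot of $\tau$ in $N_1$, slot of $\tau$ in $N_2$). Modulo the order-three symmetry of each RISC there are only a handful of slots ($N_S$ has four tile-orbits, $N_L$ a few more, and the orbit ``$\tau$ is a square'' versus ``$\tau$ is a triangle'' must agree between the two), so there are finitely many such pairs. For each I would (i) compute $|p_1-p_2|$ and discard the pair unless it is $<\ell$; then (ii) among the survivors, check that the tilings forced inside $N_1$ and inside $N_2$ agree on every tile of $N_1\cap N_2$, discarding any pair producing a conflict. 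I expect exactly three pairs to survive (up to the redundancy of choosing different common tiles $\tau$), namely the three dumbbell configurations of \Cref{dumb bell}; one then notes that in each case the group generated by the order-three rotations about $p_1$ and $p_2$ is cocompact and its translates of the dumbbell tile $\mathbb R^2$, yielding $\mathcal{Q}$, $\totaleclipse$, and $\aceofbase$ respectively.

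The main obstacle I anticipate is step (ii) of the search: organizing the consistency check on $N_1\cap N_2$ so that it is transparently complete for each of the finitely many slot-pairs. A secondary care point, already addressed above, is that $N_1$ and $N_2$ share an actual \emph{tile} rather than merely overlapping as point sets -- this is why one passes through the interior of $\tau$ and uses the strict inradius inequality $|p_1-p_2|<2\rho_S$. The remaining ingredients (the exact value of $\rho_S$, the bound $\rho\ge\rho_S$ for $N_L$, and the distance computations in (i)) are routine figure-checks.
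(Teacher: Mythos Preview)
Your proposal is correct and follows essentially the same approach as the paper: an inradius argument forces the two RISCs to share a tile, after which a finite combinatorial classification of possible overlaps yields the three configurations. The paper organizes the finite search a bit more efficiently than your slot-pair enumeration---first eliminating the mixed $N_S$/$N_L$ case with a single distance computation, then for same-type pairs classifying the overlap according to whether it contains a square or consists of two adjacent triangles---but the underlying idea is identical.
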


\begin{remark}\label{embark} In each case of the Figure, the union of the two dashed equilateral triangles forms a fundamental domain for the subgroup of $\Lambda$ generated by the order-three rotations at the RISC rotation centers. Hence $\mathcal{T}$ is determined in each case by its intersection with this fundamental domain.
\end{remark}

\begin{proof}[Proof of Lemma \ref{And then there was shmoo}]
A Euclidean geometry computation shows that the left-hand RISC neighborhood of Figure \ref{blobby blobs} contains an open disk of radius $1+1/(2\sqrt{3}) = \ell/2$ centered at its rotation center $p$, and the right-hand neighborhood contains one of radius $1+1/\sqrt{3}>\ell/2$. Therefore, RISC neighborhoods with rotation centers at a distance less than $\ell$ must overlap.

Two RISCs that overlap must do so in a union of tiles of $\mathcal{T}$, and their combinatorics constrain the possibilities. In particular, the left- and right-hand RISCs of Figure \ref{blobby blobs} can only overlap each other in a union of adjacent triangles, and this puts their centers at distance $1/\sqrt{3} + 1 + \sqrt{3}/2 + 1/(2\sqrt{3}) > \ell$. Therefore any two RISC neighborhoods with rotation centers at a distance less than $\ell$ must consist of two copies of the same type.

For overlapping copies of a single RISC type, their region of overlap either contains a square, or it is a union of two adjacent triangles. For the RISC on the left in Figure \ref{blobby blobs}, the former region of overlap puts the rotation centers a distance of $1+1/\sqrt{3}$ apart, and the union of the two neighborhoods is pictured on the left in Figure \ref{dumb bell}. The latter puts the centers $1+2/\sqrt{3}$ apart and is pictured in the middle of the Figure. For the RISC on the right in Figure \ref{blobby blobs}, a region of overlap containing squares yields a distance of $1+2/\sqrt{3}$. This configuration is pictured on the right in Figure \ref{dumb bell}. On the other hand, overlapping in two adjacent triangles puts the distance at $1+\sqrt{3}+1/\sqrt{3}>\ell$, thus not satisfying the Lemma's hypotheses.
\end{proof}

\begin{prop}\label{And then there were two}
Suppose $\Lambda$ is a discrete, cocompact group of isometries of $\mathbb{R}^2$ containing rotations of order three that leaves invariant a tiling $\mathcal{T}$ of $\mathbb{C}$ into squares and equilateral triangles of side length $1$---with each shape represented---and such that the minimal translation length $\tau$ among infinite-order elements of $\Lambda$ satisfies $\tau\le 2+\sqrt{3}$. Then $\mathcal{T}$ is one of the tilings generated from those in Figure \ref{dumb bell} as in Remark \ref{embark}.
\end{prop}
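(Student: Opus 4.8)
The plan is to reduce the statement, via the two structural results already established, to a single existence claim about rotation centers. By \Cref{Especially those of you on the jury.}, since $\mathcal{T}$ contains at least one square tile (both shapes are represented), it contains a RISC neighborhood $R_1$, with a fixed point $p_1$ of an order-three rotation of $\Lambda$ at its center. By \Cref{And then there was shmoo} together with \Cref{embark}, it then suffices to produce a \emph{second} RISC neighborhood $R_2\subset\mathcal{T}$, distinct from $R_1$, whose rotation center $p_2$ lies at distance strictly less than $\ell = 2+1/\sqrt{3}$ from $p_1$: for then $R_1\cup R_2$ is one of the three configurations of \Cref{dumb bell}, and $\mathcal{T}$ is the tiling that configuration determines. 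Note that the hypothesis $\tau\le 2+\sqrt{3}$ does \emph{not} by itself let us take $R_2 = g(R_1)$ for a shortest translation $g$, since $2+\sqrt3 > \ell$; the finer lattice supplied by \Cref{chow-dah! It's chow-dah!} is what makes the argument work.

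The key input is \Cref{chow-dah! It's chow-dah!}: because $\tau\le 2+\sqrt{3}$, the fixed points of order-three rotations in $\Lambda$ are exactly the vertices of a tiling of $\mathbb{R}^2$ by equilateral triangles of side $\tau/\sqrt{3}$, and $\tau/\sqrt3\le (2+\sqrt3)/\sqrt3 = 1+2/\sqrt3 < \ell$. In particular $p_1$ has a neighbouring order-three fixed point at distance $\tau/\sqrt3 < \ell$, and more generally any two adjacent vertices of this lattice are within $\ell$ of each other. The set of RISC centers is $\Lambda$-invariant --- any $g\in\Lambda$ carries $R_1$ to a RISC and $p_1$ to its center, and $g(p_1)$ is then the nearest order-three fixed point to the square $g(S)$ --- and it is nonempty, hence it is a union of $\Lambda$-orbits of order-three fixed points. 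When $\Lambda$ is a $(2,3,6)$-rotation group there is only one such orbit, so \emph{every} order-three fixed point is a RISC center and any neighbour of $p_1$ in the lattice supplies the desired $p_2$. When $\tau < \ell$ one may instead simply take $R_2 = g(R_1)$ for a shortest translation $g$, whose center is at distance $\tau < \ell$ from $p_1$.

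The remaining case --- $\Lambda$ a $(3,3,3)$-rotation group with $\ell\le\tau\le 2+\sqrt{3}$ --- is where I expect the real difficulty, and it is the main obstacle. Here the order-three fixed points split into three $\Lambda$-orbits, each a triangular sublattice of mesh $\tau\ge\ell$, so two RISC centers lying in a \emph{single} orbit are always at distance at least $\tau\ge\ell$; one must therefore show that the squares of $\mathcal{T}$ cannot have all of their nearest order-three fixed points confined to one orbit, whereupon a neighbouring pair of RISC centers drawn from two distinct orbits lies at distance $\tau/\sqrt3 < \ell$. To rule out the single-orbit scenario I would use the rigidity of the triangular tiling together with the explicit local combinatorics of a RISC (\Cref{I'll kill you all!}, \Cref{nearest, dearest}, \Cref{blobby blobs}): every square with a given nearest order-three fixed point $q$ lies inside the RISC based at $q$, so the union of RISCs over one orbit would contain all squares, and its complement would be forced to be the standard triangulation; one then checks, via \Cref{rotation center position} and the way tiles meet along RISC boundaries, that this is incompatible with order-three rotations fixing the points of the other two orbits. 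With a second orbit thus shown to host RISC centers, one picks $p_2$ adjacent to $p_1$ in the mesh-$\tau/\sqrt3$ lattice with $p_2$ in a RISC-center orbit distinct from that of $p_1$, and feeds $R_1$ and $R_2$ into \Cref{And then there was shmoo} to conclude that $\mathcal{T}$ is $\mathcal{Q}$, $\aceofbase$, or $\totaleclipse$.
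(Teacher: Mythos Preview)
Your overall architecture matches the paper's: exhibit one RISC, then find a second RISC whose center lies within $\ell = 2 + 1/\sqrt{3}$ of the first and invoke \Cref{And then there was shmoo}. The easy reduction via $\tau < \ell$ (take a shortest translate of $R_1$) is fine. But two points deserve correction.

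First, your treatment of the $(2,3,6)$ case rests on a false claim. In a $(2,3,6)$-rotation group there are \emph{two} $\Lambda$-orbits of order-three fixed points, not one: the order-six centers (fixed points of the square of the order-six rotation) form one orbit, and the genuine order-three centers form another. The former orbit is a triangular sublattice of mesh $\tau$, so when $\tau\ge\ell$ nothing in your argument prevents all RISC centers from lying in that orbit alone. Thus the $(2,3,6)$ case with $\tau\ge\ell$ is \emph{not} disposed of by your orbit count; it falls into the same ``hard case'' as the $(3,3,3)$ situation. The correct dichotomy, which the paper uses, is not $(2,3,6)$ versus $(3,3,3)$ but rather: fix a triangle $\Delta$ of the mesh-$\tau/\sqrt3$ lattice of \Cref{chow-dah! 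It's chow-dah!}; either two of its vertices are RISC centers (done, since $\tau/\sqrt3<\ell$), or exactly one is.

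Second, your sketch for the hard case is where the content lies, and it is both vague and aimed in a direction that does not obviously close. The claim that the complement of a single orbit's worth of RISCs is ``forced to be the standard triangulation'' is unjustified---boundary edges of a RISC can abut further squares---and even granting it, nothing in \Cref{rotation center position} forbids the remaining order-three fixed points from sitting at triangle centroids or six-triangle vertices in that complement. The paper instead attacks the ``one RISC vertex in $\Delta$'' case head-on with local geometry: it pins down an annular sector (between radii roughly $\ell/\sqrt3$ and $(2+\sqrt3)/\sqrt3$) around the RISC center $p$ in which some order-three fixed point of $\Lambda$ must lie, and then, by a case analysis on the combinatorics of tiles just outside the RISC boundary (separately for the two RISC types of \Cref{blobby blobs}), shows that no triangle centroid or six-triangle vertex in that sector can actually be a rotation center of $\mathcal{T}$---except in the one configuration that already forces a second RISC adjacent to the first. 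This explicit exclusion of candidate rotation-center locations is the substance of the proof, and your proposal does not supply it.
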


\begin{remark}
   From left to right, the tilings of \Cref{And then there were two} are also pictured on the left in Figure \ref{tiddly}; as \totaleclipse\ from \Cref{fig_total_eclipse}; and as the tiling \aceofbase\ pictured in \Cref{fig_ace}.
\end{remark}

\begin{proof}
By Lemma \ref{chow-dah! It's chow-dah!}, there is a tiling of $\mathbb{R}^2$ by equilateral triangles (only---this is an overlay of $\mathcal{T}$) with sides of length $\tau/\sqrt{3}$ and vertex set equal to the set of fixed points of order-three rotations of $\Lambda$. Fix a tile $\Delta$ of this tiling by equilateral triangles. Since $\mathcal{T}$ contains a square, by Corollary \ref{Especially those of you on the jury.} at least one vertex of the tiling by equilateral triangles has a neighborhood in $\mathcal{T}$ isometric to one of those pictured in Figure \ref{blobby blobs}, a RISC; and therefore, by Lemma \ref{chow-dah! It's chow-dah!}(1), at least one vertex of $\Delta$ has such a neighborhood.

If two vertices of $\Delta$ have RISC neighborhoods then since $\tau/\sqrt{3}$ is less than $\ell$ as defined in \Cref{And then there was shmoo}, that result plus Remark \ref{embark} imply the conclusion of this one. For the rest of this proof, we therefore consider the case that exactly one vertex of $\Delta$, call it $p$, is a RISC rotation center. If $e$ is the edge of $\Delta$ opposite $p$, $\bar\Delta\ne \Delta$ is the equilateral triangle sharing $e$ with $\Delta$, and $\bar{p}$ is the vertex of $\bar\Delta$ opposite $e$, then $\bar{p}$ is at distance $\tau$ from $p$ and is its image under a translation belonging to $\Lambda$. It is thus also a RISC rotation center, and we may assume that $\tau\geq \ell$, for $\ell$ as in \Cref{And then there was shmoo}, since otherwise that result plus Remark \ref{embark} again classify all possibilities.

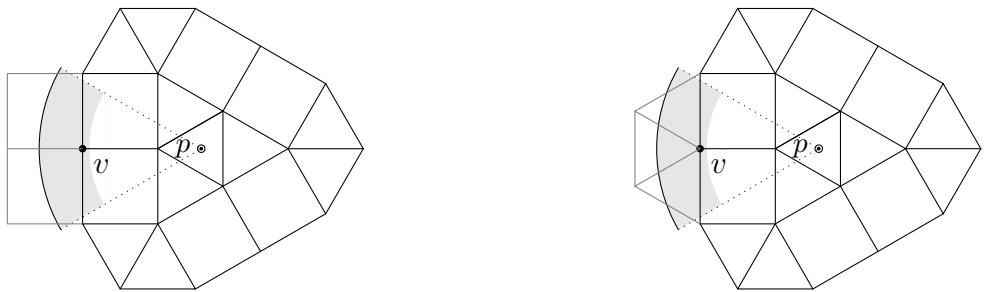
\begin{figure}[ht]
\begin{tikzpicture}

\begin{scope}[scale=0.5, rotate=30, xshift=-3in]

\draw [dotted] (-3.309,-0.577) -- (1,-0.577) -- (-1.155,3.155);
\fill [opacity=0.1] (-3.309,-0.577) arc (180:120:4.309);
\fill [opacity=0.1] (-3.309,-0.577)--(-1.976,-0.577)--(-0.488,2)--(-1.155,3.155);
\fill [color=white] (-1.976,-0.577) arc (180:120:2.976);
\draw (-3.309,-0.577) arc (180:120:4.309);

\draw (0,0) -- (2,0) -- (1,-1.732) -- cycle;
\draw (0,0) -- (2,0) -- (1,1.732) -- cycle;
\draw (2,0) -- (3.732,1) -- (2.732,2.732) -- (1,1.732);
\draw (2,0) -- (3,-1.732) -- (-1,-1.732) -- (0,0);
\draw (3.732,1) -- (4.732,-0.732) -- (3,-1.732) -- (4.732,-2.732) -- (4.732,-0.732);
\draw (4.732,-2.732) -- (3,-3.732) -- (3,-1.732);
\draw (3,-3.732) -- (-1,-3.732) -- (-1,-1.732) -- (-2.732,-2.732) -- (-1,-3.732);
\draw (1,-1.732) -- (1,-3.732);
\draw (-2.732,-2.732) -- (-2.732,-0.732) -- (-1,-1.732);
\draw (-2.732,-0.732) -- (-0.732,2.732) -- (1,1.732) -- (1,3.732) -- (2.732,2.732);
\draw (-0.732,2.732) -- (1,3.732);
\draw (0,0) -- (-1.732,1);

\fill (1,-0.577) circle [radius=0.05];
\draw (1,-0.577) circle [radius=0.1];
\node [left] at (1,-0.577) {\small $p$};

\fill (-1.732,1) circle [radius=0.1];
\node [below right] at (-1.732,1) {$v$};
\draw [gray] (-1.732,1) -- (-3.464,2) -- (-4.464,0.268) -- (-2.732,-0.732);
\draw [gray] (-3.464,2) -- (-2.464,3.732) -- (-0.732,2.732);

\end{scope}

\begin{scope}[scale=0.5, xshift=3in, rotate=30, yshift=-1.732in]

\draw [dotted] (-3.309,-0.577) -- (1,-0.577) -- (-1.155,3.155);
\fill [opacity=0.1] (-3.309,-0.577) arc (180:120:4.309);
\fill [opacity=0.1] (-3.309,-0.577)--(-1.976,-0.577)--(-0.488,2)--(-1.155,3.155);
\fill [color=white] (-1.976,-0.577) arc (180:120:2.976);
\draw (-3.309,-0.577) arc (180:120:4.309);

\draw (0,0) -- (2,0) -- (1,-1.732) -- cycle;
\draw (0,0) -- (2,0) -- (1,1.732) -- cycle;
\draw (2,0) -- (3.732,1) -- (2.732,2.732) -- (1,1.732);
\draw (2,0) -- (3,-1.732) -- (-1,-1.732) -- (0,0);
\draw (3.732,1) -- (4.732,-0.732) -- (3,-1.732) -- (4.732,-2.732) -- (4.732,-0.732);
\draw (4.732,-2.732) -- (3,-3.732) -- (3,-1.732);
\draw (3,-3.732) -- (-1,-3.732) -- (-1,-1.732) -- (-2.732,-2.732) -- (-1,-3.732);
\draw (1,-1.732) -- (1,-3.732);
\draw (-2.732,-2.732) -- (-2.732,-0.732) -- (-1,-1.732);
\draw (-2.732,-0.732) -- (-0.732,2.732) -- (1,1.732) -- (1,3.732) -- (2.732,2.732);
\draw (-0.732,2.732) -- (1,3.732);
\draw (0,0) -- (-1.732,1);

\fill (1,-0.577) circle [radius=0.05];
\draw (1,-0.577) circle [radius=0.1];
\node [left] at (1,-0.577) {\small $p$};

\fill (-1.732,1) circle [radius=0.1];
\node [below right] at (-1.732,1) {$v$};
\draw [gray] (-2.732,-0.732) -- (-3.732,1) -- (-1.732,1) -- (-2.732,2.732) -- (-3.732,1);
\draw [gray] (-2.732,2.732) -- (-0.732,2.732);

\end{scope}
\end{tikzpicture}
\caption{Two possible neighborhoods of a vertex $v$ on $\partial(\mbox{RISC})$.}
\label{location location location}
\end{figure}

We will show that in fact it is not possible for a unique vertex of $\Delta$ to be a RISC rotation center. We claim first:

\begin{claim}\label{clam}
    For $\ell \le \tau \le 2+\sqrt{3}$, if a unique vertex of $\Delta$ is a RISC rotation center, it does not belong to the RISC pictured on the right in Figure \ref{blobby blobs}.
\end{claim}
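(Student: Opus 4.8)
The plan is a proof by contradiction. Suppose $\ell\le\tau\le 2+\sqrt3$ (with $\ell=2+1/\sqrt3$), that exactly one vertex $p$ of $\Delta$ is a RISC rotation center, and --- against the claim --- that $p$ is the center of the right-hand RISC $R$ of \Cref{blobby blobs}. The goal is to exhibit a second RISC center within distance $\ell$ of $p$ and contradict uniqueness with it.

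First I would assemble the tools, all in the normalization where tiles have unit side. By \Cref{nearest, dearest} the minimum distance from the center of a square tile to an order-three fixed point of $\Lambda$ is $\tfrac12+\tfrac1{2\sqrt3}$, attained precisely when that fixed point is the centroid of an adjacent triangle; hence, by \Cref{I'll kill you all!} and \Cref{Especially those of you on the jury.}, the centroid of any triangle of $\mathcal T$ that is an order-three fixed point and has a square neighbor is automatically a RISC center. With \Cref{rotation center position} this shows that any order-three fixed point which is \emph{not} a RISC center is either the common apex of six triangles of $\mathcal T$ or the centroid of a triangle with no square neighbor. I would also record, from the disk estimates in the proof of \Cref{And then there was shmoo}, that a left-hand and a right-hand RISC cannot have centers less than $\ell$ apart; so any RISC center within distance $\ell$ of $p$ is again of right-hand type.

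The geometric core uses the vertex $v$ of $\partial R$ nearest to $p$, at distance $1+\tfrac1{\sqrt3}$, the $60^\circ$ wedge at $p$ containing it, and the annular sector between radii $\ell/\sqrt3$ and $(2+\sqrt3)/\sqrt3$ --- the locus of positions a nearest-neighbor lattice point of $p$ lying in that wedge could occupy --- all drawn in \Cref{location location location}. The tiles of $R$ incident to $v$ leave a $270^\circ$ gap, which can be closed only by three squares or by one square and three triangles, giving the two local completions shown in gray. For each completion I would propagate the now-forced tiles into the annular sector and show it must contain a square tile adjacent to the triangle centered at a nearest-neighbor lattice point $q$ of $p$ (equivalently, a square whose center lies beyond distance $\tau/3$ from $p$, so that by \Cref{I'll kill you all!} its nearest order-three fixed point is the centroid of one of its neighboring triangles, which the combinatorics pin down to be such a $q$). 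Then $q$ is a RISC center with $d(p,q)\le(2+\sqrt3)/\sqrt3<\ell$, hence right-handed, so \Cref{And then there was shmoo} together with \Cref{embark} forces $\mathcal T=\aceofbase$. But in $\aceofbase$ the order-three fixed points fall into two $\Lambda$-orbits --- the $3^6$-type vertices, which are apexes of six triangles, and the TTT centroids, which are RISC centers --- and since the two repeated vertices of a $(2,3,6)$-group's fundamental triangle lie in the orbit without order-six rotations (here the TTT centroids), every lattice triangle $\Delta$ has exactly two RISC-center vertices. This contradicts uniqueness, proving the claim.

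The main obstacle is the bookkeeping concentrated in the core step: for \emph{each} of the two completions, and \emph{uniformly} for all $\tau\in[\ell,2+\sqrt3]$, one must verify that the partial tiling is genuinely forced out far enough to produce the square, identify which lattice point $q$ serves as its nearest order-three fixed point, and check $d(p,q)<\ell$. Because the quantities in play --- $1+\tfrac1{\sqrt3}$, $\ell/\sqrt3$, $(2+\sqrt3)/\sqrt3$, $\ell$ --- differ only by a few hundredths, the (elementary) distance estimates on the coordinates recorded in \Cref{blobby blobs} and \Cref{location location location} must be made carefully; one must also rule out the possibility that $q$ is an apex of six triangles, which fails because the disk of radius $\sqrt3/2$ such a $q$ would carry cannot overlap the forced square.
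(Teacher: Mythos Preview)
Your setup---the $60^\circ$ wedge at $p$, the annular sector of radii between $\ell/\sqrt{3}$ and $(2+\sqrt{3})/\sqrt{3}$, and the case split on the completion of $\mathcal{T}$ at the boundary vertex $v$---matches the paper's approach. But the execution diverges in ways that matter.

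First, a concrete error: the tiles of the right-hand RISC incident to $v$ are two \emph{squares} sharing the edge from $v$ into the interior, so the external gap at $v$ is $180^\circ$, not $270^\circ$. The only ways to fill $180^\circ$ with unit tiles are two squares or three triangles---these are precisely the two gray completions in \Cref{location location location}. Your ``three squares or one square and three triangles'' is a mis-count, and since the subsequent propagation depends on which completion you start from, this error would derail the case analysis.

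Second, your uniform strategy of locating a square near the lattice point $q$ and concluding $q$ is a RISC center does not fit the two-squares completion. In that case the paper observes that the entire shaded annular sector lies inside squares, so by \Cref{rotation center position} it \emph{cannot contain any order-three fixed point at all}. That is a direct contradiction with \Cref{chow-dah! It's chow-dah!} (one fixed point per $60^\circ$ sector at radius $\tau/\sqrt{3}$), and there is no $q$ to attach a RISC to. In the three-triangles completion the paper shows the only admissible position for $q$ is the centroid of the middle gray triangle at distance exactly $1+2/\sqrt{3}$; rotating the RISC about this $q$ exhibits $q$ itself as a right-hand RISC center adjacent to $p$ in the lattice, contradicting uniqueness immediately. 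Your detour through \Cref{And then there was shmoo} and the global structure of $\aceofbase$ is unnecessary here, and your closing argument about ``the two repeated vertices of a $(2,3,6)$-group's fundamental triangle'' tacitly assumes $\Lambda$ is the full symmetry group of $\aceofbase$, which is not given.
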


 \begin{proof}[Proof of Claim] 
 Consider possible locations of order-three rotation centers of $\Lambda$, relative to a ray $\rho$ from the center $p$ of the RISC that splits a pair of its adjacent squares. Let $v$ be the vertex of intersection between $\rho$ and the RISC's boundary. The only two possible neighborhoods of $v$ in $\mathcal{T}$ are pictured in Figure \ref{location location location}---with either two squares (on the left) or three triangles (right) drawn in gray outside the RISC---along with an arc $\alpha$ of angular measure $60$ degrees centered at the intersection between $\rho$ and a circle of radius $1+2/\sqrt{3}$ centered at $p$. The shaded regions in the Figure consist of points with these angular values and between circles of radii the upper and lower bounds on $\tau/\sqrt{3}$:  
 \[ \frac{1}{3} + \frac{2}{\sqrt{3}} \le \frac{\tau}{\sqrt{3}} \le 1 + \frac{2}{\sqrt{3}}. \]
 At least one order-three rotation center of $\Lambda$ must lie in the shaded region, since there are six such centers total, evenly spaced around the circle of radius $\tau/\sqrt{3}$ centered at $p$.

The neighborhood of $v$ pictured on the left in Figure \ref{location location location} is not compatible with the existence of an order-three rotation center in the shaded region, since by Lemma \ref{rotation center position}, such a rotation center must occur at the centroid or a vertex of a triangle of $\mathcal{T}$. The neighborhood pictured on the right does encompass the case pictured on the right in Figure \ref{dumb bell}, in which the rotation center lies on the ray $\rho$ at the centroid of the middle triangle, at a distance attaining the upper bound $1+2/\sqrt{3}$ for $\tau/\sqrt{3}$. In this case, applying the order-three rotation around this point and its inverse duplicates the previous Figure. In particular, in this case there are distinct copies of this RISC centered at adjacent vertices; hence at adjacent vertices of $\Delta$.

In the right-hand case of Figure \ref{location location location}, neither of the other two triangles outside the RISC can be preserved by an order-three rotation, since each intersects both a triangle and a square. And the parts of the shaded region that lie outside its intersection with the pictured neighborhood of $v$ are contained in a union of squares and/or triangles that intersect both a square and a triangle. Thus no other location in the shaded region can be the fixed point of an order-three rotation that preserves $\mathcal{T}$, proving the claim.
\end{proof}

Given the claim, we are left to consider the RISC pictured on the left in Figure \ref{blobby blobs}, in the case that $\ell\le \tau\le 2+\sqrt{3}$ and its center $p$ is at the unique vertex of $\Delta$ which is a RISC rotation center. Hence as observed above, the nearest other RISC rotation center is a translate of this one at a distance $\tau$ away. A Euclidean geometry computation shows that this RISC type is entirely contained in a closed disk of radius $1+1/\sqrt{3}$ centered at $p$. If this is less than half of $\tau$, then since every square tile of $\mathcal{T}$ is contained in a RISC (by Lemma \ref{Especially those of you on the jury.}), this RISC would intersect only triangles of $\mathcal{T}$. But this is not possible, since its boundary vertices which are contained in one square must also be contained in another. Therefore in this case we must have $2+2/\sqrt{3}\ge\tau\ge\ell\doteq 2+1/\sqrt{3}$.

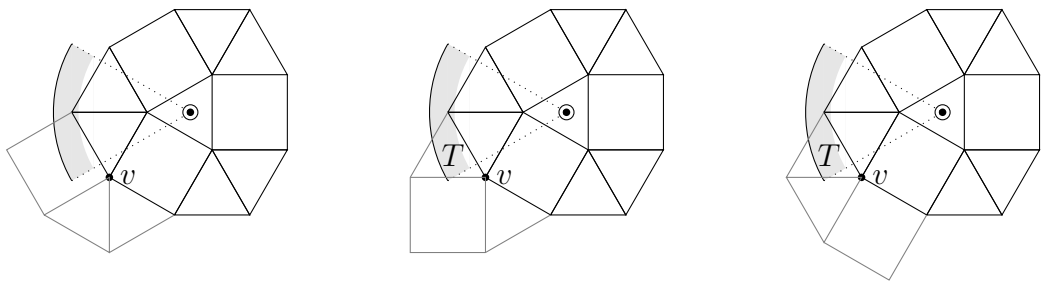
\begin{figure}[ht]
\begin{tikzpicture}

\begin{scope}[xshift=-5cm]

\draw [dotted] (0,-0.911) -- (1.577,0) -- (0,0.911);
\draw (0,-0.911) arc (210:150:1.821);
\fill [opacity=0.1] (0,-0.911) arc (210:150:1.821);
\fill [opacity=0.1] (0,-0.911) -- (0.289,-0.744) -- (0.289,0.744) -- (0,0.911) -- cycle;
\fill [color=white] (0.289,-0.744) arc (210:150:1.488);

\draw (0,0) -- (1,0) -- (0.5,0.866) -- cycle;
\draw [xshift=1.866cm, yshift=0.5cm] (0,0) -- (1,0) -- (0.5,0.866) -- cycle;
\draw [xshift=1.366cm, yshift=-1.366cm] (0,0) -- (1,0) -- (0.5,0.866) -- cycle;

\draw (0,0) -- (1,0) -- (0.5,-0.866) -- cycle;
\draw [xshift=1.366cm, yshift=1.366cm] (0,0) -- (1,0) -- (0.5,-0.866) -- cycle;
\draw [xshift=1.866cm, yshift=-0.5cm] (0,0) -- (1,0) -- (0.5,-0.866) -- cycle;

\draw (1.866,0.5) -- (1.866,-0.5) -- (2.866,-0.5) -- (2.866,0.5);

\draw (1,0) -- (1.866,0.5) -- (1.366,1.366) -- (0.5,0.866) -- cycle;

\draw (1,0) -- (1.866,-0.5) -- (1.366,-1.366) -- (0.5,-0.866) -- cycle;

\fill (1.577,0) circle [radius=0.05];
\draw (1.577,0) circle [radius=0.1];

\fill (0.5,-0.866) circle [radius=0.05];
\node [right] at (0.5,-0.866) {$v$};

\draw [color=gray] (0,0) -- (-0.866,-0.5) -- (-0.366,-1.366) -- (0.5,-0.866) -- (0.5,-1.866) -- (-0.366,-1.366);
\draw [color=gray] (0.5,-1.866) -- (1.366,-1.366);

\end{scope}

\begin{scope}

\draw [dotted] (0,-0.911) -- (1.577,0) -- (0,0.911);
\draw (0,-0.911) arc (210:150:1.821);
\fill [opacity=0.1] (0,-0.911) arc (210:150:1.821);
\fill [opacity=0.1] (0,-0.911) -- (0.289,-0.744) -- (0.289,0.744) -- (0,0.911) -- cycle;
\fill [color=white] (0.289,-0.744) arc (210:150:1.488);

\draw (0,0) -- (1,0) -- (0.5,0.866) -- cycle;
\draw [xshift=1.866cm, yshift=0.5cm] (0,0) -- (1,0) -- (0.5,0.866) -- cycle;
\draw [xshift=1.366cm, yshift=-1.366cm] (0,0) -- (1,0) -- (0.5,0.866) -- cycle;

\draw (0,0) -- (1,0) -- (0.5,-0.866) -- cycle;
\draw [xshift=1.366cm, yshift=1.366cm] (0,0) -- (1,0) -- (0.5,-0.866) -- cycle;
\draw [xshift=1.866cm, yshift=-0.5cm] (0,0) -- (1,0) -- (0.5,-0.866) -- cycle;

\draw (1.866,0.5) -- (1.866,-0.5) -- (2.866,-0.5) -- (2.866,0.5);

\draw (1,0) -- (1.866,0.5) -- (1.366,1.366) -- (0.5,0.866) -- cycle;

\draw (1,0) -- (1.866,-0.5) -- (1.366,-1.366) -- (0.5,-0.866) -- cycle;

\fill (1.577,0) circle [radius=0.05];
\draw (1.577,0) circle [radius=0.1];

\fill (0.5,-0.866) circle [radius=0.05];
\node [right] at (0.5,-0.866) {$v$};

\node at (0.07,-0.57) {$T$};

\draw [color=gray] (0,0) -- (-0.5,-0.866) -- (-0.5,-1.866) -- (0.5,-1.866) -- (1.366,-1.366);
\draw [color=gray] (-0.5,-0.866) -- (0.5,-0.866) -- (0.5,-1.866);

\end{scope}

\begin{scope}[xshift=5cm]

\draw [dotted] (0,-0.911) -- (1.577,0) -- (0,0.911);
\draw (0,-0.911) arc (210:150:1.821);
\fill [opacity=0.1] (0,-0.911) arc (210:150:1.821);
\fill [opacity=0.1] (0,-0.911) -- (0.289,-0.744) -- (0.289,0.744) -- (0,0.911) -- cycle;
\fill [color=white] (0.289,-0.744) arc (210:150:1.488);

\draw (0,0) -- (1,0) -- (0.5,0.866) -- cycle;
\draw [xshift=1.866cm, yshift=0.5cm] (0,0) -- (1,0) -- (0.5,0.866) -- cycle;
\draw [xshift=1.366cm, yshift=-1.366cm] (0,0) -- (1,0) -- (0.5,0.866) -- cycle;

\draw (0,0) -- (1,0) -- (0.5,-0.866) -- cycle;
\draw [xshift=1.366cm, yshift=1.366cm] (0,0) -- (1,0) -- (0.5,-0.866) -- cycle;
\draw [xshift=1.866cm, yshift=-0.5cm] (0,0) -- (1,0) -- (0.5,-0.866) -- cycle;

\draw (1.866,0.5) -- (1.866,-0.5) -- (2.866,-0.5) -- (2.866,0.5);

\draw (1,0) -- (1.866,0.5) -- (1.366,1.366) -- (0.5,0.866) -- cycle;

\draw (1,0) -- (1.866,-0.5) -- (1.366,-1.366) -- (0.5,-0.866) -- cycle;

\fill (1.577,0) circle [radius=0.05];
\draw (1.577,0) circle [radius=0.1];

\fill (0.5,-0.866) circle [radius=0.05];
\node [right] at (0.5,-0.866) {$v$};

\node at (0.07,-0.57) {$T$};

\draw [color=gray] (0,0) -- (-0.5,-0.866) -- (0.5,-0.866) -- (0,-1.732) -- (-0.5,-0.866);
\draw [color=gray] (0,-1.732) -- (0.866,-2.232) -- (1.366,-1.366);

\end{scope}

\end{tikzpicture}
\caption{Possible rotation center locations.}
\label{lil small guy}
\end{figure}

In parallel to the proof of Claim \ref{clam}, we now consider possible locations of rotation centers at a distance of $\tau/\sqrt{3}$ from $p$. At least one must lie on a sixty-degree arc centered on a ray $\rho$ from $p$ that splits a pair of triangles that share an edge. Given the constraints on $\tau$, this means that such a rotation center would lie in the shaded region(s) in Figure \ref{lil small guy}. This Figure's three cases depict the three distinct possibilities for the combinatorics of a tiled neighborhood of the vertex labeled $v$ on the RISC's boundary.

Since the RISC is invariant under reflection through the line containing $\rho$, after applying such a reflection if necessary, we may assume that there is a rotation center on the side of $\rho$ containing $v$. We note first that such a rotation center cannot lie outside the pictured tiled neighborhood of $v$ in any of the three cases, since in all cases the centroids of any other possible (non-pictured) triangular tiles are not close enough to $p$ to lie in the shaded neighborhood. But the tiled neighborhood of $v$ also cannot contain a rotation center in any of the three cases: in the leftmost, its intersection with the shaded neighborhood is contained in a square; in the other two its intersection with the shaded neighborhood contains the centroid of a unique triangle (``$T$'' in Figure \ref{lil small guy}), but the neighborhood's combinatorics prohibit this centroid from being a rotation center for $\mathcal{T}$. In the center case we can see this from the fact that $T$ shares one edge with a triangle and another with the square. In the right-hand case, while $T$ shares edges only with triangles, one of these triangles abuts a square along its edge clockwise from its intersection with $T$, but the other one abuts a triangle along its corresponding edge.

Between this case and Claim \ref{clam}, we have now showed that $\Delta$ cannot have a unique vertex that is a RISC rotation center. Therefore as discussed at the beginning of the proof, the classification of possibilities follows from \Cref{And then there was shmoo} and Remark \ref{embark}.
\end{proof}

\section{Eliminating the remaining peripheral tiling}\label{eclipse}

The main result of this section, Proposition \ref{slop} below, implies that the tiling \totaleclipse\ pictured in \Cref{fig_total_eclipse} is not the peripheral tiling of a mixed-platonic knot complement with hidden symmetries. This holds even though the full orientation-preserving symmetry group of \totaleclipse\ is a $(2,3,6)$-rotation group, as will be explicitly exhibited in the proof of Proposition \ref{slop}. It further implies the main theorem \ref{main_thm}, since \totaleclipse\ is the final possible peripheral tiling of such knots that has remained unaddressed to this point.

\newcommand\SlopPropStmt{There is no orientable one-cusped, mixed-platonic orbifold $O = \mathbb{H}^3/\Gamma$ with peripheral tiling $\totaleclipse$, such that the peripheral subgroup of $\Gamma$ is the full orientation-preserving symmetry group $\text{Sym}^{OP}\left(\totaleclipse\right)$ of $\totaleclipse$\ or its index-two $(3,3,3)$-rotation subgroup.}
\begin{prop}\label{slop}\SlopPropStmt
\end{prop}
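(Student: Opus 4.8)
\emph{Proof strategy.} I will argue by contradiction, adapting to this rigid setting the method used for \Cref{cheese brother} and \Cref{no ace}. Suppose $O=\Hthree/\Gamma$ is as in the statement; represent $\Gamma$ in standard position with invariant tiling $\Pthree$ and peripheral tiling $\mathcal{T}=\totaleclipse$ on $\partial B_\infty$, and let $\Lambda<\Gamma$ be the stabilizer of $\infty$, equal by hypothesis to $\text{Sym}^{OP}(\totaleclipse)$ or to its index-two $(3,3,3)$-subgroup. The first step is to describe $\totaleclipse$ explicitly from \Cref{fig_total_eclipse} (equivalently the middle configuration of \Cref{dumb bell}) and exhibit $\text{Sym}^{OP}(\totaleclipse)$ as a $(2,3,6)$-rotation group: its order-six centres lie at the vertices where six triangles meet, its order-three centres at the centroids of the triangles surrounded by three squares (that these, rather than the six-valent vertices, carry the ``pure'' order-three rotations is forced by \Cref{rotation center position}), and its order-two centres at certain edge midpoints. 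From this I would extract the $\Lambda$-orbit structure of the tiles, and in particular two facts: (i) $\totaleclipse$ contains no pair of edge-adjacent squares, and no triangle of type TTT (a TTT triangle would have all three vertices six-valent, hence pairwise at distance $1$; but six-valent vertices are order-three rotation centres, and by \Cref{chow-dah! It's chow-dah!} any two such centres are at distance at least $(2+\sqrt3)/\sqrt3>1$); and (ii) an orbit count, using that $\totaleclipse$ has shortest translation length $2+\sqrt3$, so that the cusp cross-section has area $(2+\sqrt3)^2\sqrt3/12$: this forces exactly one $\Lambda$-orbit of squares (necessarily free) and exactly three $\Lambda$-orbits of triangles --- one of type OOO, stabilized by an order-three rotation, and two free orbits, which turn out to be of types OTT and OOT.

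The combinatorial heart is to transfer this data through \Cref{on til}. In the one-cusped case that lemma yields a bijection between $\Gamma$-orbits of pairs (tile of $\Pthree$, ideal vertex) and $\Lambda$-orbits of tiles of $\mathcal{T}$ --- a (tetrahedron, vertex) pair matched to a triangle, an (octahedron, vertex) pair to a square --- with the combinatorial type of the tile recording which faces of the polyhedron at that vertex abut octahedra. A tetrahedron abutting octahedra across $0$ or $1$ of its four faces has a TTT cross-section, which $\totaleclipse$ forbids, so the only admissible tetrahedron types are OOTT, OOOT, OOOO. Matching their cross-section types against the three available triangle orbits, and using that there is just one OOO, one OTT and one OOT orbit together with the forced $\Gamma$-stabilizers of such polyhedra ($A_4$ for an OOOO tetrahedron whose four cross-sections are all OOO, $\mathbb{Z}/2$ for an OOTT tetrahedron), I expect to conclude: there are no OOOT tetrahedra, exactly one $\Gamma$-orbit of OOTT tetrahedra, exactly one of OOOO tetrahedra, and --- since $\totaleclipse$ has no edge-adjacent squares, so no two octahedra of $\Pthree$ share a face --- exactly one $\Gamma$-orbit of octahedra, each abutting only tetrahedra.

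With $\Pthree$ constrained this tightly, the final step is to exhibit a forbidden local configuration, exactly as in \Cref{no ace}. There are several promising sources of one: the two OOT cross-sections of an OOTT tetrahedron meet across their ``T'' edges, forcing a specific adjacency between OOT triangles of $\totaleclipse$; around each edge of an OOOO tetrahedron the two intervening polyhedra must themselves be OOTT tetrahedra, which pins down how the four surrounding octahedra meet their neighbours; and since the stabilizer $S_3$ of an octahedron acts on its eight faces with orbits of sizes $2$ and $6$, exactly two of its faces abut tetrahedra of one of the two surviving types and the other six abut the other --- a rigid constraint to compare against the figure. One of these demands a pair of adjacent tiles whose local pattern simply does not occur in $\totaleclipse$; since the one-cusped hypothesis forces every horospherical cross-section of every tile of $\Pthree$ to lift to a tile of $\mathcal{T}$, that is the contradiction. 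The case $\Lambda\cong(3,3,3)$ goes through the same way: passing to the index-two subgroup at most doubles every orbit count in the bookkeeping, leaving unchanged the list of admissible polyhedron types and the local configurations at issue.

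The step I expect to be the main obstacle is the enumeration and bookkeeping of the previous two paragraphs: getting the orbit counts right while tracking the nontrivial $\Gamma$-stabilizers that the orbifold (as opposed to manifold) setting introduces, and then pinpointing which of the several local configurations forced by the surviving polyhedron types actually fails to embed in $\totaleclipse$. Describing $\totaleclipse$ and its symmetry group, and deducing the contradiction once the bad configuration is identified, should be comparatively routine.
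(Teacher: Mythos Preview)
Your approach is genuinely different from the paper's. You propose to push the tile-type bookkeeping of \Cref{cheese brother} and \Cref{no ace} into the orbifold setting; the paper instead develops an entire subsection (\S\ref{Voronoi sec}) of Voronoi-cell machinery and runs a face-pairing/angle-sum argument on the hexagon–pentagon tiling of $\partial V_\infty$ from \Cref{no props stop bops}. The paper's argument hinges on the fact that the dihedral angle of $V_\infty$ at every edge is $2\pi/3$, so in the quotient orbifold each edge class must receive exactly one or exactly three $\Lambda$-orbits of half-edges; it then tracks specific face-pairing elements $\gamma_1,\gamma_2\in\Gamma$ and shows that more than three $\Lambda$-orbits of long half-edges get identified. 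Your combinatorial route would, if it succeeded, be more elementary and avoid the Voronoi apparatus entirely.

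That said, your proposal has a genuine gap: you have not actually located a forbidden configuration, and you acknowledge this. Your structural deductions are largely sound—$\totaleclipse$ indeed has one $\Lambda$-orbit each of OOO (stabilizer $\mathbb{Z}/3$), OOT, and OTT triangles and one free square orbit, and the orbit-matching does force exactly one $\Gamma$-orbit of OOOO tetrahedra (stabilizer $A_4$), one of OOTT tetrahedra (stabilizer $\mathbb{Z}/2$), no OOOT tetrahedra, and one octahedron orbit with stabilizer $S_3$. But this rigidified picture is \emph{internally consistent} at the level of orbit counts and face-matchings between octahedra and tetrahedra; there is no obvious numerical obstruction of the kind that drove \Cref{no ace}. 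Your ``promising sources'' are vaguer than you may realize: for instance, ``the two OOT cross-sections of an OOTT tetrahedron meet across their T edges'' is not literally true (they sit at different ideal vertices), and the correct statement—that across the T-face of an OOTT tetrahedron at an OOT vertex one finds another OOTT tetrahedron whose cross-section at that vertex may be OOT \emph{or} OTT depending on how its $e_T$ edge sits—does not by itself contradict anything in $\totaleclipse$. The fact that the authors, having used exactly your method for $\aceofbase$, built new machinery for $\totaleclipse$ is strong circumstantial evidence that the tile-type argument does not close here without substantially more work than you anticipate.
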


We will prove Proposition \ref{slop}, and use it to complete the proof of the main theorem, in Subsection \ref{sloppropproof} below. Subsection \ref{Voronoi sec} lays the foundation for this proof. It begins by reproducing the general definition of the Voronoi cell of a horoball in a horoball packing and culminates in Proposition \ref{no props stop bops}, which characterizes the possible boundary faces of a Voronoi cell of a horoball packing supplied by Lemma \ref{packing} for an arbitrary mixed-platonic orbifold.

\begin{figure}
\includegraphics[width=3.5in]{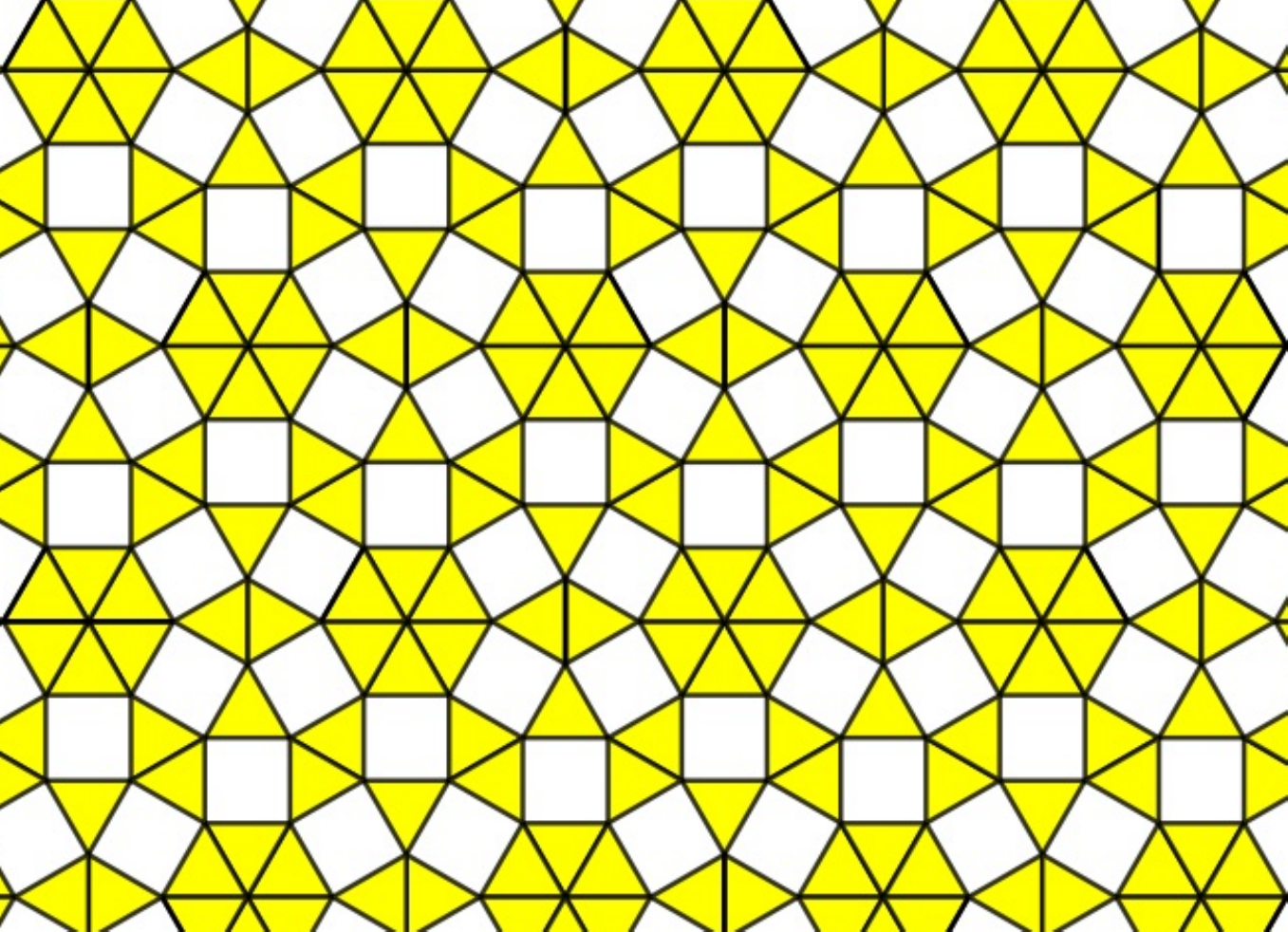}
\caption{\totaleclipse~ - the tiling of the plane where each square is surrounded by four triangles}
\label{fig_total_eclipse}
\end{figure}

\subsection{Voronoi tessellations of horoball packings}\label{Voronoi sec} We begin by reproducing the definition of a standard tool in arguments about cusped hyperbolic $3$-orbifolds since at least the work of Meyerhoff identifying those of minimal volume \cite{Meyerhoff}. Given a horoball $B_p$ centered at an ideal point $p$ of $\mathbb{H}^3$, belonging to a horoball packing $\mathcal{H}$, the \textit{Voronoi cell of $B_p$}, or when necessary, the \textit{Voronoi cell of $B_p$ relative to $\mathcal{H}$}, is defined as:
\begin{align}\label{malign}
	V_p = \{ x\in\mathbb{H}^3\,|\,d(x,B_p) \le d(x,B')\ \mbox{for all}\ B'\in\mathcal{H}\}.
\end{align}
Its \textit{interior} is $\mathit{int}(V_p) = \{x\,|\,d(x,B_p) < d(x,B')\ \forall B'\ne B_p\}$, and its \textit{boundary} is $V_p-\mathit{int}(V_p)$. Some first observations about Voronoi cells, again very standard, are below.

\begin{fact}\label{smackt} Suppose $\mathcal{H}$ is a horoball packing of $\mathbb{H}^3$ and let $\partial_{\infty}\mathcal{H} = \{ p \in \partial \mathbb{H}^3 \, | \, B_p \in \mathcal{H} \}$.\begin{itemize}
	\item For any $p \in \partial_{\infty}\mathcal{H}$, $B_p \subset V_p$ and $\mathbb{H}^3 = \bigcup_{p \in \partial_{\infty}\mathcal{H}} V_p$.
	\item If $\mathcal{H}$ is invariant under the action of a group $\Gamma$ of isometries then so is the collection of Voronoi cells $\{ V_p \, | \, p \in \partial_{\infty}\mathcal{H} \}$.
	\item For $\Gamma$ as above, the stabilizer $\Lambda$ of $p$ in $\Gamma$ equals the stabilizer of $V_p$.
    \item For $\Gamma$, $p$, and $\Lambda$ as above, the restriction to $V_p$ of the quotient map $\mathbb{H}^3\to\mathbb{H}^3/\Gamma$ factors through a map of $V_p/\Lambda$ that is embedding on $\mathit{int}(V_p)/\Lambda$. \end{itemize}
\end{fact}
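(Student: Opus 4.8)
The plan is to verify the four assertions in order. Three of them are formal consequences of the defining inequality (\ref{malign}) together with two small observations---that a packing contains at most one horoball centered at any given ideal point, and that the sets $\mathit{int}(V_q)$ for distinct $q$ are pairwise disjoint---while the covering statement in the first bullet requires a genuine, though standard, argument. For that one: the containment $B_p\subset V_p$ is immediate since $d(x,B_p)=0\le d(x,B')$ for every $x\in B_p$ and every $B'\in\mathcal{H}$, so it suffices, given $x\in\mathbb{H}^3$, to show $r:=\inf_{B'\in\mathcal{H}}d(x,B')$ is attained; any $B_p$ realizing the minimum then has $x\in V_p$. I would pass to the Poincar\'e ball model centered at $x$, in which each $B'=B_q$ appears as a Euclidean ball internally tangent to the unit sphere, of Euclidean radius $\frac{1}{2}\bigl(1-\tanh(d(x,B_q)/2)\bigr)$. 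These Euclidean balls have pairwise disjoint interiors (as $\mathcal{H}$ is a packing) and lie in the closed unit ball, so for each $R>0$ only finitely many have radius at least $\frac{1}{2}\bigl(1-\tanh(R/2)\bigr)$; equivalently only finitely many $B'$ satisfy $d(x,B')\le R$. Applying this with $R=r+1$ exhibits $r$ as the minimum of a finite set, hence attained.

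For the second and third bullets: since $\gamma\in\Gamma$ is an isometry carrying $\mathcal{H}$ onto itself with $\gamma(B_q)=B_{\gamma(q)}$, applying $\gamma$ to each term of the inequalities defining $V_p$ yields $\gamma(V_p)=V_{\gamma(p)}$ (and likewise $\gamma(\mathit{int}(V_p))=\mathit{int}(V_{\gamma(p)})$), which gives the second bullet. For the third, if $\gamma$ fixes $p$ then $\gamma(B_p)$ is a horoball of $\mathcal{H}$ centered at $p$ and hence equals $B_p$, so $\gamma(V_p)=V_{\gamma(p)}=V_p$. Conversely, $V_q$ determines $q$: one checks $B_q$ is the only horoball of $\mathcal{H}$ contained in $V_q$, since a point interior to a distinct horoball $B_{q'}$ lies outside $B_q$ (disjoint interiors) yet inside $B_{q'}$, so it has positive distance to $B_q$ and zero distance to $B_{q'}\ne B_q$, violating the Voronoi inequality for $V_q$; thus $\gamma(V_p)=V_p$ forces $B_{\gamma(p)}=B_p$, i.e.\ $\gamma(p)=p$.

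For the fourth bullet, the restriction of $\mathbb{H}^3\to\mathbb{H}^3/\Gamma$ to $V_p$ factors through $V_p/\Lambda$ trivially because $\Lambda\le\Gamma$. To see it is injective on $\mathit{int}(V_p)/\Lambda$, suppose $x,y\in\mathit{int}(V_p)$ with $y=\gamma(x)$ for some $\gamma\in\Gamma$; then $y\in\mathit{int}(V_p)\cap\gamma(\mathit{int}(V_p))=\mathit{int}(V_p)\cap\mathit{int}(V_{\gamma(p)})$, and since the defining strict inequalities for distinct cells are mutually exclusive this forces $V_{\gamma(p)}=V_p$, hence $\gamma\in\Lambda$ by the third bullet, so $x$ and $y$ lie in a single $\Lambda$-orbit. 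That the resulting continuous injection $\mathit{int}(V_p)/\Lambda\to\mathbb{H}^3/\Gamma$ is an embedding follows from proper discontinuity of the $\Gamma$-action, which makes $\mathbb{H}^3\to\mathbb{H}^3/\Gamma$ an open map.

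The one step that is not a one-line manipulation is the attainment of the infimum in the first bullet; that is the reason to isolate the ball-model packing estimate. Once it---and the two small observations noted at the outset---are recorded, the remaining assertions are bookkeeping.
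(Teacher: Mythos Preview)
Your proof is correct. The paper does not actually give a proof of this Fact---it is presented as a collection of ``very standard'' first observations and left without justification---so your argument supplies the details the authors elected to omit, including the one genuinely non-formal step (attainment of the infimum via the ball-model packing estimate).
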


We next consider the very special case of a symmetric horoball packing centered at the ideal points of a regular ideal tetrahedron or octahedron. We first make a definition.

\begin{definition}\label{effin diction} Given a pair of non-overlapping horoballs $B_p$ and $B_q$, let $\lambda$ be the geodesic joining $p$ to $q$; let $x$ be the point of $\lambda$ equidistant from $B_p$ and $B_q$; and define $H_{pq}$ to be the half-space that is bounded by the geodesic plane that intersects $\lambda$ perpendicularly at $x$ and contains $B_p$. 
\end{definition}

A standard exercise in hyperbolic trigonometry shows that the half-space $H_{pq}$ of Definition \ref{effin diction} consists exactly of the points of $\mathbb{H}^3$ that are at least as close to $B_p$ as to $B_q$. In particular, $H_{pq}$ contains $B_p$. It further follows from this property and the definition that if $B_p$ belongs to a horoball packing $\mathcal{H}$ then:
\begin{align}\label{it's fine} 
    V_p = \bigcap_{p \in \partial_{\infty}\mathcal{H}-\{p\}} H_{pq}
\end{align}
We use this characterization below.

\begin{lemma}\label{crema pronounced "cremma"}
For $P$ a regular ideal tetrahedron or octahedron, let $\mathcal{H}_P$ be the maximal, fully symmetric horoball packing centered at the ideal vertices of $P$ from \Cref{deaf and itchin'}. If $P = T$ is a regular ideal tetrahedron then for a fixed ideal vertex $p$ of $T$, enumerating the other ideal vertices of $T$ as $q_1,q_2,q_3$, the Voronoi cell $V_p = H_{pq_1}\cap H_{pq_2}\cap H_{pq_3}$ of $B_p$ relative to $\mathcal{H}_T$ does not intersect the face of $T$ containing $q_1$, $q_2$, and $q_3$.

If $P = O$ is a regular ideal octahedron then for a fixed ideal vertex $p$ of $O$, the Voronoi cell $V_p$ of $B_p$ is $H_{pq_1}\cap H_{pq_2}\cap H_{pq_3}\cap H_{pq_4}$, where $q_1,q_2,q_3,q_4$ are the other distinct ideal vertices of $O$ that are joined to $p$ by an edge of $O$. $V_p$ intersects only the faces of $O$ that contain $p$.

In each case, $V_p$ has an angle of $2\pi/3$ at each of its edges, which are of the form $H_{pq_i}\cap H_{pq_j}$ for $j\ne i$ such that $q_i$ and $q_j$ are ideal vertices of a common face of $P = T$ or $O$. It has a single vertex $v = \bigcap_i H_{p q_i}$. The nearest-point retraction to $B_p$ takes $v$ to the centroid of the equilateral triangle or square $P\cap \partial B_{p}$, and the edges of $V_p$ into equidistant lines between adjacent vertices of $P\cap\partial B_p$.
\end{lemma}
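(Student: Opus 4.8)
The plan is to reduce everything to a Euclidean computation in $\mathbb{C}$. First I would use the fact that the full self-isometry group of $P$ preserves $\mathcal{H}_P$ to place $P$ in standard position (\Cref{canon pos}) with the distinguished ideal vertex $p$ at $\infty$, so that $B_p = B_\infty = \{(z,t)\mid t\ge 1\}$. By \Cref{bort}, every other horoball $B_q$ of $\mathcal{H}_P$ is a Euclidean ball tangent to $\mathbb{C}$ at $q$ of Euclidean diameter $D_q$, with $D_q = 1$ when $P=T$ or $q\in\{0,1,1+i,i\}$, and $D_q = 1/2$ when $P=O$ and $q=\tfrac12(1+i)$. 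Using $d\big((z,t),B_\infty\big) = \log(1/t)$ and $d\big((z,t),B_q\big) = \log\frac{|z-q|^2+t^2}{D_q\,t}$, the half-space $H_{pq}$ of \Cref{effin diction} works out to $\{(z,t)\mid |z-q|^2+t^2\ge D_q\}$, i.e.\ the side of the hemisphere $S_q$ of Euclidean radius $\sqrt{D_q}$ centered at $q$ that contains $B_\infty$; so by \eqref{it's fine}, $V_p$ is exactly the set of points of $\Hthree$ lying on or outside every $S_q$.

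Next I would dispatch the two face-avoidance claims by a common averaging trick. For $P=T$ we have $V_p = H_{pq_1}\cap H_{pq_2}\cap H_{pq_3}$ by definition, each $S_{q_i}$ of radius $1$; writing $c$ for the centroid ($=$ circumcenter) of the equilateral triangle $q_1q_2q_3$, one has $\sum_{i=1}^3(|z-q_i|^2+t^2) = 3(|z-c|^2+t^2)+1$. The face $f$ of $T$ opposite $p$ is the hemisphere $|z-c|^2+t^2 = 1/3$, on which this sum equals $2<3$, so some $|z-q_i|^2+t^2<1$ and no point of $f$ lies in $V_p$. For $P=O$, label the four vertices adjacent to $p$ as $q_1,\dots,q_4$ ($=0,1,1+i,i$) and set $q_5=\tfrac12(1+i)$, the center of the square $q_1q_2q_3q_4$; the identity $\sum_{j=1}^4(|z-q_j|^2+t^2) = 4(|z-q_5|^2+t^2)+2$ shows $|z-q_j|^2+t^2\ge 1$ for $j=1,\dots,4$ forces $|z-q_5|^2+t^2\ge 1/2 = D_{q_5}$, hence $\bigcap_{j=1}^4 H_{pq_j}\subseteq H_{pq_5}$ and $V_p = \bigcap_{j=1}^4 H_{pq_j}$ involves only hemispheres at vertices adjacent to $p$. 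For the claim that $V_p$ meets only faces of $O$ through $p$: the four faces not through $p$ form one orbit under the order-four rotation $z\mapsto iz+1$ of $\mathcal{H}_O$, and the representative with vertices $q_5,0,1$ is the hemisphere $|z-\tfrac12|^2+t^2 = 1/4$, on which $|z|^2+t^2 = \mathrm{Re}(z)$ and $|z-1|^2+t^2 = 1-\mathrm{Re}(z)$ sum to $1<2$; so again no point of the face lies in $V_p$.

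The structural claims come out of the same picture. If $q_i,q_j$ share a face of $P$ then $|q_i-q_j|=1$, so the radius-one hemispheres $S_{q_i},S_{q_j}$ meet at angle $\arccos\tfrac12=\pi/3$, and since $V_p$ is outside both its dihedral angle along $S_{q_i}\cap S_{q_j}$ is $2\pi/3$. For $P=O$ the only other pairs are opposite equator vertices, at distance $\sqrt2$, so $S_{q_i}\perp S_{q_j}$; imposing the remaining two constraints $|z-q_k|^2+t^2\ge 1$ shows $S_{q_i}\cap S_{q_j}$ meets $\overline{V_p}$ only at the single point $v$ lying on all the $S_{q_k}$, namely $v=(c,\sqrt{2/3})$ for $P=T$ and $v=(\tfrac12(1+i),\tfrac1{\sqrt2})$ for $P=O$. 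This $v$ (the ``$\bigcap_i H_{pq_i}$'' of the statement) is a single point because the relevant $q_i$ are concyclic — automatically for three points, and the four equator vertices lie on a circle of radius $1/\sqrt2$ — so the vertical ray over that circle's center meets all the $S_{q_i}$ at one height. Finally, the nearest-point retraction of $\Hthree$ onto $B_\infty$ is vertical projection $(z,t)\mapsto(z,1)$; it sends $v$ to $(c,1)$, the centroid of the equilateral triangle $P\cap\partial B_\infty$, respectively to $(\tfrac12(1+i),1)$, the center of the square $P\cap\partial B_\infty$, and it sends the edge $S_{q_i}\cap S_{q_j}\cap\partial V_p$ into the vertical plane over the perpendicular bisector of $q_i$ and $q_j$, which is the equidistant line between the adjacent vertices $(q_i,1),(q_j,1)$ of $P\cap\partial B_\infty$.

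The main obstacle is the octahedral bookkeeping — confirming that the hemisphere $S_{q_5}$ is redundant and that the orthogonal hemisphere pairs contribute only the single vertex $v$ rather than extra edges or faces of $V_p$ — but the two averaging identities make both of these, along with the disjointness of $V_p$ from the faces of $O$ not containing $p$, fall out cleanly, so I expect no serious difficulty.
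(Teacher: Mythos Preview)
Your argument is correct and complete. It differs from the paper's in its computational core: where the paper locates the unique vertex $v$ of $V_p$ explicitly, observes that this is the lowest point of $\partial V_p$ over the relevant region, and compares its height to the top of the opposite-face hemisphere (and, in the octahedral case, invokes the reflection through the hemisphere $\partial H_{pp'}$ to show that $H_{pp'}$ is redundant), you instead use the averaging identities
\[
\sum_{i}\bigl(|z-q_i|^2+t^2\bigr)=n\bigl(|z-c|^2+t^2\bigr)+\sum_i|q_i|^2-n|c|^2
\]
to handle both the redundancy of $H_{pq_5}$ and the disjointness of $V_p$ from the faces not through $p$ in one stroke. Your approach has the virtue of treating the tetrahedral and octahedral cases uniformly and avoiding any ``lowest point'' reasoning; the paper's approach is more geometric and makes the role of the symmetry of $\mathcal{H}_O$ (via the reflection exchanging $p$ and $p'$) explicit. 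Both routes are elementary and of comparable length.
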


\begin{proof} In the case of the regular ideal tetrahedron $T$, working in the upper half-space model for $\mathbb{H}^3$ we may assume that the ideal vertices are $p = \infty$, $q_1 = 0$, $q_2 = 1$, and $q_3 = \omega$, where $\omega = \frac{1}{2}(1+i\sqrt{3})$. The horoball packing $\mathcal{H}_T$ then has $B_{\infty}$ as the region above the Euclidean plane of height one, with each $B_{q_i}$ a ball of Euclidean radius $1/2$ tangent to $\mathbb{C}\times\{0\}$ at $q_i$. For each $i$, $H_{pq_i}$ is then the region above a Euclidean hemisphere of radius $1$ centered at $q_i$. 

Points of the Euclidean hemisphere $\partial H_{pq_i}$ are of the form $(z,t)$, where $t = \sqrt{1-\|z-q_i\|^2}$ for $\|z-q_i\|$ the Euclidean distance from $z$ to $q_i$ in $\mathbb{C}$. Such a point $(z,t)$ thus lies in $V_p$ if and only if $z$ is as close to $q_i$ as to any other $q_j$, since if this were not the case it would lie outside of $H_{pq_j}$. The edges of $V_p$ are of the form $H_{pq_i}\cap H_{pq_j}$, for $j\ne i$, and it follows that these consist of points $(z,t)$ such that $z$ is Euclidean-equidistant from both $q_i$ and $q_j$. This also describes their images under projection to $B_{\infty}$ which in the present model is the map $(z,t)\mapsto (z,1)$. A Euclidean geometry computation further shows that the angle between the Euclidean hemispheres $\partial H_{pq_i}$ and $\partial H_{pq_j}$ along their geodesic of intersection is $2\pi/3$.

Finally, the lowest point of $V_p\cap T$ occurs at the point $(z,t)$ such that $z$ has maximum distance from the $q_i$, the triple intersection point: 
\[ v = \bigcap_{i=1}^3 \partial H_{pq_i} = \left\{\left(\frac{3+i\sqrt{3}}{4}, \sqrt{\frac{2}{3}} \right) \right\}. \]
This lies above the geodesic plane containing the face of $T$ opposite $p = \infty$, which is a Euclidean hemisphere of radius $1/\sqrt{3}$. We finally note that the projection of $v$ to $\partial B_{\infty}$ is equidistant from the three vertices $0$, $1$, and $\omega$ of $T\cap\partial B_{\infty}$.

We can likewise embed the regular ideal octahedron $O$ in the upper half-space model for $\mathbb{H}^3$ with its vertex $p$ at $\infty$ and $q_1$ through $q_4$ at $0$, $1$, $1+i$, and $i$ in (say) counterclockwise order. This embedding places the final vertex $p'$ of $O$, the one opposite $p$, at $(1+i)/2$. As for $\mathcal{H}_T$, with this embedding $B_{\infty}\in\mathcal{H}_O$ is the region above the plane at height $1$, the $B_{q_i}$ are Euclidean balls of radius $1/2$, and the $H_{pq_i}$ lie above Euclidean hemispheres of radius $1$. 

Note that in this case, for $i$ and $j$ such that $q_i$ and $q_j$ are opposite vertices of the square bounded by all of the $q_i$ then every point of $\partial H_{pq_i}\cap\partial H_{pq_j}$ lies beneath $\partial H_{pq_k}$ for some $k\ne i,j$, save the unique quadruple intersection point:
\[ v = \bigcap_{i=1}^4 \partial H_{pq_i} = \left\{\left(\frac{1+i}{2}, \frac{1}{\sqrt{2}} \right) \right\}. \]
Therefore $\partial H_{pq_i}\cap\partial H_{pq_j}$ contain an edge of $\bigcap_{i=1}^4 H_{pq_i}$ if and only if $q_i$ and $q_j$ are adjacent vertices of the square, ie.~if and only if they are ideal vertices of a face of $O$. 

The quadruple intersection point $v$ above is also the lowest point of $\bigcap_{i=1}^4 H_{pq_i}$, as its $z$ coordinate maximizes the distance to all $q_i$. The Euclidean hemisphere of radius $1/\sqrt{2}$ centered at $(1+i)/2$, which contains $v$, also contains all $q_i$ in its ideal boundary. Furthermore the reflection in this hemisphere exchanges $p = \infty$ with $p' = (1+i)/2$, fixing the $q_i$. This reflection therefore acts as a symmetry of $O$ which, by the symmetry-invariance of $\mathcal{H}_O$, exchanges $B_p$ with $B_{p'}$. It follows that $H_{pp'}$ is the region above the fixed set of this reflection, so by the above, $H_{pp'}$ contains $\bigcap_{i=1}^4 H_{pq_i}$. Therefore
\[ V_p = H_{pp'} \cap \left(\bigcap_{i=1}^4 H_{pq_i} \right) = \bigcap_{i=1}^4 H_{pq_i}. \]
For the assertion that $V_p$ intersects only the faces of $O$ containing $p$, we note that $\partial H_{pp'}$ separates $V_p$ from the remaining three faces, which all contain $p'$. The assertions about the dihedral angles at edges, and their projections and that of $v$, follow as in the previous case.
\end{proof}

\begin{figure}
\includegraphics[width=4.5 in]{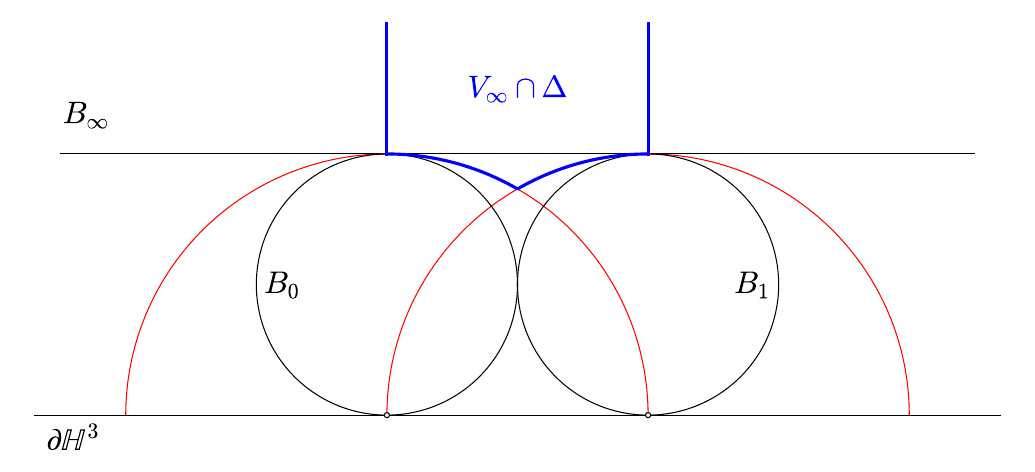}

\caption{$V_\infty \cap \Delta$ outlined in blue, where $\Delta$ is the face with vertices at $0$, $1$, and $\infty$ of a regular ideal tetrahedron or octahedron in standard position as in \Cref{canon pos}. Also showing horoballs centered at $0$, $1$ and $\infty$ belonging to the associated packing $\mathcal{H}_T$ or $\mathcal{H}_O$ from Lemma \ref{crema pronounced "cremma"}, or $\mathcal{H}$ from Lemma \ref{packing}, and cross-sections of equidistant planes in red.}

\label{fig_voronoi}
\end{figure}

For reference, Figure \ref{fig_voronoi} shows the view of $V_{p}\cap \Delta$ ``from the side'', and Figure \ref{single cell Voronoi} below records the ``view from infinity" of $(\partial V_p) \cap T$ and $(\partial V_p) \cap O$ in the two cases of Lemma \ref{crema pronounced "cremma"}, with the polyhedra embedded as in that Lemma's proof with $p$ at $\infty$. Taking ``$P$'' to represent $T$ or $O$ in the respective cases, the portion of $(\partial V_p)\cap P$ contained in $H_{pq_1}$, with $q_1$ at $0$, is shaded in each case, and the bold arcs represent segments of pairwise intersections $\partial H_{pq_i}\cap\partial H_{pq_j}$ that are contained in $(\partial V_p)\cap P$.

\begin{figure}[ht]
\begin{tikzpicture}

\begin{scope}[xshift=-1in, scale=1.4]

\fill [opacity=0.1] (0,0) -- (0.5,0) -- (0.5,0.289) -- (0.25,0.466) -- cycle;

\draw (0,0) -- (1,0) -- (0.5,0.866) -- cycle;
\draw [thick] (0.25,0.433) -- (0.5,0.289) -- (0.75,0.433);
\draw [thick] (0.5,0) -- (0.5,0.289);

\end{scope}

\begin{scope}[xshift=1in, scale=1.4]

\fill [opacity=0.1] (0,0) -- (0.5,0) -- (0.5,0.5) -- (0,0.5) -- cycle;

\draw (0,0) -- (1,0) -- (1,1) -- (0,1) -- cycle;
\draw [thick] (0,0.5) -- (1,0.5);
\draw [thick] (0.5,0) -- (0.5,1);

\end{scope}

\end{tikzpicture}

\caption{The intersection of a Voronoi cell's boundary with a single polyhedron}
\label{single cell Voronoi}
\end{figure}
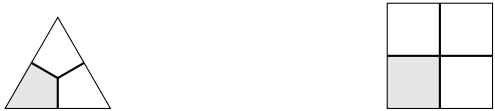

\begin{lemma}\label{tuma, rhymes with "Yuma", as in arizona} Suppose $O = \mathbb{H}^3/\Gamma$ is a mixed-platonic $3$-orbifold, and let $\Pthree$ be a $\Gamma$-invariant tiling of $\mathbb{H}^3$ by regular ideal tetrahedra or octahedra. For any tile $P$ of $\Pthree$, and any $x\in P$, each closest horoball to $x$ among all those in the horoball packing $\mathcal{H}$ supplied by Lemma \ref{packing} is centered at an ideal vertex of $P$. Thus for any ideal vertex $p$ of $P$, with horoball $B_p\in\mathcal{H}$ centered at $p$, $V_p\cap P = V_p^P\cap P$, where $V_p$ and $V_p^P$ are the Voronoi cells of $B_p$ respectively relative to $\mathcal{H}$ and to the packing $\mathcal{H}_P$ from \Cref{deaf and itchin'}. 
\end{lemma}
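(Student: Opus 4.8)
The plan is to prove the distance statement first---that every closest horoball of $\mathcal H$ to a point $x\in P$ is centered at an ideal vertex of $P$---and then read off the equality $V_p\cap P=V_p^P\cap P$ from it together with \Cref{crema pronounced "cremma"}. One inclusion is free: since $\mathcal H_P\subseteq\mathcal H$, the intersection (\ref{it's fine}) defining $V_p$ runs over more half-spaces $H_{pq}$ than the one defining $V_p^P$, so $V_p\cap P\subseteq V_p^P\cap P$. The content is therefore the distance statement and the reverse inclusion.

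The engine is the following consequence of \Cref{crema pronounced "cremma"}, which I will call the \emph{face observation}: for any tile $S$ of $\mathcal P$ and any point $y$ lying in a face $f$ of $S$, the horoball of $\mathcal H_S$ nearest to $y$ is centered at an ideal vertex of $f$; and if $q$ is an ideal vertex of $S$ with $q\notin f$, then $d(y,B_q)>d(y,B_{p'})$ strictly, where $p'$ is that nearest vertex of $f$. Indeed, the finitely many Voronoi cells $V_p^S$ over the ideal vertices $p$ of $S$ cover $\mathbb H^3$, hence cover $f$, while \Cref{crema pronounced "cremma"} says $V_q^S$ is disjoint from $f$ whenever $q\notin f$; so the cells of vertices lying on $f$ already cover $f$, and $y\notin V_q^S$ gives the strict inequality. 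I also record, from the octahedral case of that Lemma, that $V_p^S\cap S\subseteq H_{pp'}$ for the vertex $p'$ opposite $p$; hence $x\in V_p^P\cap P$ forces $d(x,B_p)\le d(x,B_{q'})$ for \emph{every} ideal vertex $q'\neq p$ of $P$, not only the edge-adjacent ones.

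Next I would establish the \emph{comparison claim}: if $x\in P$ and $q$ is an ideal point with $B_q\in\mathcal H$ but $q$ not an ideal vertex of $P$, then $d(x,B_q)>d(x,B_{p^\ast})$ for some ideal vertex $p^\ast$ of $P$. By \Cref{packing}, $B_q$ meets no tile lacking $q$ as a vertex, so $B_q\cap P=\emptyset$, the minimizing geodesic segment $\gamma$ from $x$ to the nearest point $\pi_q(x)\in B_q$ is nondegenerate, and $\pi_q(x)$ lies in a tile having $q$ as an ideal vertex. By local finiteness of the tiling, $\gamma$ passes through a finite chain of tiles $P=R_0,R_1,\dots,R_k$, crossing each face it meets exactly once (a geodesic meets a geodesic hyperplane at most once), with $q$ an ideal vertex of $R_k$; set $y_j=\gamma\cap(R_{j-1}\cap R_j)$ and $y_0=x$. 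Every terminal subsegment of $\gamma$ still realizes the distance to the convex set $B_q$, so $d(x,B_q)=d(x,y_j)+d(y_j,B_q)$ for all $j$. Let $i\ge 1$ be least with $q$ an ideal vertex of $R_i$; then $q\notin f_i:=R_{i-1}\cap R_i$, and the face observation applied to $R_i$ at $y_i\in f_i$ produces a vertex $p'$ of $f_i$ with $d(y_i,B_q)>d(y_i,B_{p'})$. A downward induction on $j$ from $i$ to $0$ maintains the statement ``$d(y_j,B_q)>d(y_j,B_{p})$ for some ideal vertex $p$ of $R_j$'': to go from level $j$ to $j-1$, first use the face observation on $R_j$ at $y_j\in f_j$ to replace $p$ by a vertex of $f_j$ without increasing $d(y_j,B_p)$, and then $d(y_{j-1},B_q)=d(y_{j-1},y_j)+d(y_j,B_q)>d(y_{j-1},y_j)+d(y_j,B_p)\ge d(y_{j-1},B_p)$, with $p$ now a vertex of $R_{j-1}$. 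At $j=0$ this reads $d(x,B_q)>d(x,B_{p^\ast})$ with $p^\ast$ an ideal vertex of $P$, which is both the comparison claim and the distance statement. The reverse Voronoi inclusion follows: for $x\in V_p^P\cap P$ we have $d(x,B_p)\le d(x,B_{q'})$ for all ideal vertices $q'$ of $P$ by the octahedral remark, and for any other $B_q\in\mathcal H$ the comparison claim gives $d(x,B_q)>d(x,B_{p^\ast})\ge d(x,B_p)$; hence $x\in\bigcap_q H_{pq}=V_p$, so $x\in V_p\cap P$.

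The step I expect to fight is the comparison claim, and inside it the combinatorial bookkeeping: verifying that the minimizing geodesic $\gamma$ threads through the tiling as a chain of face-adjacent tiles. When $\gamma$ meets the $1$-skeleton of $\mathcal P$ (or runs inside a hyperplane bounding a face) this chain structure can degenerate. One can either perturb $x$---for a fixed $B_q$ the offending positions form a countable union of $2$-dimensional sets, since a geodesic of the form $[x',\pi_q(x')]$ through a given point is uniquely the geodesic through that point ending at $q$, so generic $x\in P$ are fine and continuity then yields the \emph{non-strict} comparison, which already suffices for $V_p\cap P=V_p^P\cap P$---or, to retain the strict inequality for the ``every closest horoball'' phrasing, run the same Voronoi-cell bookkeeping of \Cref{crema pronounced "cremma"} directly on the tiles surrounding the edge or vertex of $\mathcal P$ through which $\gamma$ passes. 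Everything else is routine once \Cref{packing} and \Cref{crema pronounced "cremma"} are in hand.
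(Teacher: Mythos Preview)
Your argument is correct, and the structure---free inclusion from $\mathcal H_P\subseteq\mathcal H$, then the distance statement via the ``face observation'' drawn from \Cref{crema pronounced "cremma"}---matches the paper's. The difference is in how you prove the distance statement. You thread the minimizing geodesic $\gamma$ through a full chain $P=R_0,\dots,R_k$ of face-adjacent tiles and run a downward induction to carry the strict inequality all the way back to a vertex of $P$; this is valid but forces you to confront the bookkeeping issue you flag (what if $\gamma$ grazes the $1$-skeleton?). The paper bypasses the chain entirely with a one-step contradiction: assume $B$ is a \emph{nearest} horoball to $x$ with center $p_0\notin P$, and look only at the tile $P_0$ containing the foot $\lambda\cap B$. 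The key geometric observation you do not use is that the minimizing geodesic $\lambda$ from $x$ to $B$ has $p_0$ as an ideal endpoint, hence (placing $p_0$ at $\infty$) is vertical; since the faces of $P_0$ through $p_0$ are also vertical, $\lambda$ must exit $P_0$ through a face $f_0$ \emph{not} containing $p_0$. A single application of the face observation at that exit point produces a horoball $B_1$ strictly closer to $x$ than $B$, contradicting minimality. This gives the strict statement directly with no induction and no degeneracy cases. Your route buys a slightly stronger intermediate conclusion (an explicit vertex $p^\ast$ of $P$ beating $B_q$), but the paper's shortcut is cleaner for the lemma as stated.
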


\begin{proof} First fix a tile $P$ of $\Pthree$ and $x\in P$, and by way of contradiction suppose there is a nearest horoball $B$ of $\mathcal{H}$ to $x$ with its center not at an ideal vertex of $P$. Let $\lambda$ be the arc from $x$ to its nearest point on $B$, and let $P_0$ be the tile containing $\lambda\cap B$. \Cref{packing} implies that $B$ does not intersect $P$, so $P_0\ne P$, and that the center of $B$ is an ideal vertex $p_0$ of $P_0$. The sides of $P_0$ intersecting $B$ contain this ideal vertex and intersect $\partial B$ at right angles. Because $B$ is convex and $\lambda\cap B$ is the closest point on $B$ to $x$, $\lambda$ also intersects $\partial B$ at right angles. Therefore the point $x_0$ where it exits $P_0$ is contained in a face $f_0$ of $P_0$ that does not have the center of $B$ as an ideal vertex. By Lemma \ref{crema pronounced "cremma"}, $x_0$ is closer to a horoball $B_1$ centered at an ideal point of $f_0$ than to $B$. The distance $d(x,B_1)$ from $x$ to $B_1$ is at most the sum of $d(x,x_0)$ and $d(x_0,B_1)$, and this is less than $d(x,B) = d(x,x_0) + d(x_0,B)$. But this contradicts the hypothesis that $B$ is the closest horoball of $\mathcal{H}$ to $x$.

For the final assertion we first recall from the proof of \Cref{packing} that $\mathcal{H} = \bigcup_{P\in\Pthree} \mathcal{H}_P$. The characterization (\ref{it's fine}) therefore immediately implies that $V_p\subseteq V_p^P$, since $V_p$ is an intersection of half-spaces determined by a proper subset of the horoballs in $\mathcal{H}$. To give the other containment for their intersections with $\mathcal{P}$ we appeal to the Lemma's first assertion: for any $x\in P$, since the nearest horoball to $x$ lies in the collection $\mathcal{H}_P$, $x\in V_p$ if and only if $x\in V_p^P$. 
\end{proof}

\begin{corollary}\label{that's amore}
    For a mixed-platonic manifold $M = \mathbb{H}^3$ with a $\Gamma$-invariant packing $\Pthree$ of $\mathbb{H}^3$ by tetrahedra and octahedra, $\Pthree$ is the \mbox{\rm canonical cell decomposition} of the horoball packing $\mathcal{H}$ supplied by \Cref{packing}, in the sense described above Theorem 2.4 of \cite{GoodmanHeardHodgson} (building on the one-cusped case laid out by Epstein--Penner \cite[\S 4]{EpstePen}). 
\end{corollary}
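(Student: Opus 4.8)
The plan is to invoke the standard local characterization of the Epstein--Penner (canonical) cell decomposition and to verify it for $\Pthree$ using the explicit horoball geometry recorded in Remark \ref{bort} and Lemmas \ref{crema pronounced "cremma"} and \ref{tuma, rhymes with "Yuma", as in arizona}. Recall the setup: placing $\Hthree$ in the positive sheet of the hyperboloid in Minkowski space $\mathbb{R}^{3,1}$, each horoball $B\in\mathcal{H}$ has a canonical lift to a vector $\hat{B}$ on the positive light cone, and because $M$ has finite volume the $\Gamma$-invariant set $\hat{\mathcal{H}}=\{\hat{B}:B\in\mathcal{H}\}$ is discrete in the sense required by \cite{EpstePen}, so the boundary faces of its convex hull $C\subset\mathbb{R}^{3,1}$ project radially to the cells of the canonical decomposition (this is \cite[\S4]{EpstePen} for one cusp and the construction recalled above \cite[Thm.~2.4]{GoodmanHeardHodgson} in general, applied to the choice of cusp cross-sections determined by $\mathcal{H}$). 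A $\Gamma$-invariant decomposition of $\Hthree$ into ideal polyhedra with ideal vertex set $\partial_{\infty}\mathcal{H}$, each tile of which has vertex-lifts spanning an affine hyperplane of $\mathbb{R}^{3,1}$, coincides with the canonical one as soon as it is \emph{locally convex}: for each $2$-face $f$ shared by tiles $P^{+}$ and $P^{-}$, with $\Sigma^{\pm}$ the affine hyperplane spanned by the vertex-lifts of $P^{\pm}$, each vertex of $P^{-}$ not on $f$ must have its lift strictly on the side of $\Sigma^{+}$ containing $\hat{\mathcal{H}}$, and symmetrically. This local-to-global principle is due to \cite{EpstePen} and underlies the algorithm of \cite{GoodmanHeardHodgson}.

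The verification then reduces to a short finite computation. First, every tile of $\Pthree$ does span an affine hyperplane: four points of $\mathbb{R}^{3,1}$ span one unless they are affinely dependent, which fails for the (non-planar) vertices of a regular ideal tetrahedron, while for a regular ideal octahedron $O$ in standard position one computes its six vertex-lifts relative to $\mathcal{H}_{O}$ and checks by hand that they are affinely coplanar. Second, given a $2$-face $f$ of $\Pthree$, move it by an isometry to the face $\Delta$ with ideal vertices $0,1,\infty$. Since a regular ideal tetrahedron or octahedron is determined by one of its faces together with a choice of the side it lies on---its combinatorial symmetry group acts on that face as the full symmetric group $S_{3}$---each of $P^{+},P^{-}$ becomes, after this normalization, the tetrahedron or the octahedron of \Cref{canon pos} or a reflection of one of these across $\mathbb{R}\subset\mathbb{C}$; and by \Cref{packing} the horoballs of $\mathcal{H}$ at their ideal vertices are exactly the explicit ones of Remark \ref{bort}. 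This leaves only three configurations to treat---tetrahedron--tetrahedron, tetrahedron--octahedron, octahedron--octahedron---and in each I would list the at most nine relevant light-cone lifts, solve a linear system for the covector cutting out $\Sigma^{+}$, and evaluate it on the one or three vertex-lifts of $P^{-}$ off of $f$, confirming that the sign is the one corresponding to the side of $\Sigma^{+}$ containing $C$ and that the value is nonzero (which also gives $\Sigma^{+}\ne\Sigma^{-}$). By $\Gamma$-invariance, local convexity then holds at every $2$-face, so $\Pthree$ is the canonical decomposition. As an alternative route, one can use that the canonical decomposition is the polyhedral dual of the Ford--Voronoi decomposition $\{V_{p}\}$, which Lemmas \ref{crema pronounced "cremma"} and \ref{tuma, rhymes with "Yuma", as in arizona} describe precisely: each $V_{p}$ meets exactly the tiles of $\Pthree$ having $p$ as an ideal vertex, contributing a single Voronoi vertex to each such tile.

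The main obstacle I expect is the octahedron: establishing the affine coplanarity of its six vertex-lifts, and then carrying out the sign computations at faces incident to an octahedron while keeping careful track of which side of $\Sigma^{+}$ contains $C$ and of the strict inequality that separates a genuine $2$-face from one along which two tiles ought to be merged. The tetrahedron--tetrahedron case is essentially the classical computation showing that Thurston's $\operatorname{PSL}_{2}(\mathbb{Z}[\omega])$-invariant tiling is the canonical decomposition of the associated orbifolds, and the coplanarity of an octahedron's lifts can also be seen conceptually from the fact that the regular ideal octahedron with its symmetric horoball packing appears as a cell of the canonical decomposition of $\Hthree/\operatorname{PSL}_{2}(\mathbb{Z}[i])$; the genuinely new content is the mixed tetrahedron--octahedron face, but this too is a direct computation.
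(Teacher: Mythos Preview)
Your proposal is correct, and the primary route you outline---verifying Epstein--Penner local convexity in Minkowski space across the three face types (tetrahedron--tetrahedron, tetrahedron--octahedron, octahedron--octahedron)---would work and is the standard algorithmic approach underlying the ``tilt'' computations of \cite{GoodmanHeardHodgson} and SnapPy. The paper, however, takes exactly what you describe as your \emph{alternative} route: it invokes the duality between the canonical decomposition and the Ford--Voronoi spine, then reads off from Lemmas \ref{crema pronounced "cremma"} and \ref{tuma, rhymes with "Yuma", as in arizona} that each tile $P\in\Pthree$ contains a unique Voronoi vertex $v_P$, shared by precisely the Voronoi cells centered at the ideal vertices of $P$, so that the dual $3$-cell to $v_P$ is $P$ itself. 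The paper's route is cleaner in context because the geometric work---describing $V_p^P$ and showing $V_p\cap P = V_p^P\cap P$---is already done in those lemmas and is needed again in Section \ref{eclipse}; it also avoids the coplanarity check for the octahedron's six light-cone lifts and the three explicit sign computations you flag as obstacles. Your Minkowski approach is more self-contained and would generalize more readily to other tile shapes, at the cost of actually executing those computations.
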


\begin{proof} As described in \cite{GoodmanHeardHodgson}, the canonical cell decomposition of $\mathcal{H}$ is dual to its ``Ford spine'', which per the definition given there is identical to the union of boundaries of the Voronoi cells defined here in (\ref{malign}). Specifically, the canonical $3$-cell dual to a vertex $v$ of the Ford spine is the convex hull of the set of centers of horoballs whose Voronoi cells have $v$ as a vertex. 

The final assertion of Lemma \ref{tuma, rhymes with "Yuma", as in arizona} implies that the set of vertices of Voronoi cells  relative to $\mathcal{H}$ is the union, over all $P\in\Pthree$ and ideal vertices $p$ of $P$, of the vertices of the Voronoi cell $V_p$ of $B_p$ relative to $\mathcal{H}_p$. Lemma \ref{crema pronounced "cremma"} implies that there is a unique such vertex $v_P$ for each $P\in\Pthree$, which lies in the intersection of all $V_p$ for ideal vertices $p$ of $P$. By Lemma \ref{tuma, rhymes with "Yuma", as in arizona}'s first assertion, $v_P$ does not lie in any other Voronoi cell. Therefore $P$---which is the convex hull of its ideal vertices---is the dual to $v_P$.
\end{proof}

Our next result implies that if $\mathcal{T}$ is a peripheral tiling of a mixed-platonic orbifold $O$ then the structure of the Voronoi cell of the corresponding cusp of $O$ is entirely determined by $\mathcal{T}$.

\begin{prop}\label{no props stop bops} Suppose $\mathcal{T}$ is a peripheral tiling of a mixed-platonic orbifold $O = \mathbb{H}^3/\Gamma$. With $\Gamma$ represented in standard position as prescribed in Definition \ref{peripheral}, let $\mathcal{P}$ be a $\Gamma$-invariant tiling of $\mathbb{H}^3$ such that $\mathcal{T} = \mathcal{P}\cap\partial B_{\infty}$, where $B_{\infty}$ is the horoball at $\infty$ of the packing $\mathcal{H}$ supplied by Lemma \ref{packing}. Then the Voronoi cell $V_{\infty}$ of $B_{\infty}$ is a convex polyhedron, in the sense of \cite[\S 3]{Ratcliffe}. Its collection of two-dimensional faces (``sides'', in the terminology of \cite{Ratcliffe})  corresponds bijectively with the set of edges of $\mathcal{P}$ that have one ideal vertex at $\infty$, and has the following further properties:\begin{enumerate}
	\item for any two faces meeting along an edge, the dihedral angle in $V_{\infty}$ is $2\pi/3$ along this edge; and
	\item the collection of faces projects to $\partial B_{\infty}$ along geodesics orthogonal to it, and its image is a tiling of $\mathbb{R}^2$ dual to $\mathcal{T}$: the Voronoi tesselation of the set of vertices of $\mathcal{T}$, defined as in (\ref{malign}) but with vertices replacing horoballs.\end{enumerate}
Each two-cell of the tiling of $\partial V_{\infty}$ is drawn from among the tile types pictured in Figure \ref{Voronoise}, labeled according to the sequence of types of tiles of $\mathcal{P}$ encountered in cyclic order around the corresponding edge of $\mathcal{P}$.
\end{prop}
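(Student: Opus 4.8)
The plan is to assemble $V_\infty$ from the one-polyhedron Voronoi cells controlled by \Cref{crema pronounced "cremma"}, using \Cref{tuma, rhymes with "Yuma", as in arizona} to glue them along faces of $\mathcal{P}$. First I would extract two facts. By characterization~(\ref{it's fine}), $V_\infty=\bigcap_{q}H_{\infty q}$, the intersection over all $q\neq\infty$ with $B_q\in\mathcal{H}$. Since $\Gamma$ is in standard position, $B_\infty$ lies above height $1$ and the packing condition forces every $B_q$ ($q\in\mathbb{C}$) to be a Euclidean ball tangent to $\mathbb{C}$ of radius at most $1/2$; a short hyperbolic-trigonometry computation then makes each $\partial H_{\infty q}$ a Euclidean hemisphere of radius $\le 1$ centred at $q$, the radius being $1$ exactly when $B_q$ is tangent to $B_\infty$, i.e.\ when $[\infty,q]$ is an edge of $\mathcal{P}$. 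As the $B_q$ are pairwise disjoint, only finitely many of these hemispheres rise above a given positive height over a given bounded set, so the defining family of half-spaces is locally finite and $V_\infty$ is a convex polyhedron in the sense of \cite[\S 3]{Ratcliffe}. Separately, \Cref{tuma, rhymes with "Yuma", as in arizona} gives $V_\infty\cap P=V_\infty^P\cap P$ for every tile $P\ni\infty$ of $\mathcal{P}$, and its first assertion shows any tile not having $\infty$ as an ideal vertex is disjoint from $\operatorname{int}(V_\infty)$; hence $V_\infty=\bigcup_{P\ni\infty}(V_\infty^P\cap P)$, a union over exactly the tiles whose horospherical cross-sections make up $\mathcal{T}$.

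With that in hand I would identify the sides. Every side lies on some $\partial H_{\infty q}$, and combining the displayed union with \Cref{crema pronounced "cremma"}---which shows that inside a tile $P$ the only half-spaces cutting out $V_\infty^P$ are those $H_{\infty q_i}$ with $q_i$ an ideal vertex of $P$ joined to $\infty$ by an edge---every point of $\partial V_\infty$ lies on $\partial H_{\infty q}$ for some $q$ with $[\infty,q]$ an edge of $\mathcal{P}$. Conversely, for such a $q$ and any tile $P$ containing $[\infty,q]$, \Cref{crema pronounced "cremma"} exhibits a two-dimensional side of $V_\infty^P$ on $\partial H_{\infty q}$, and $V_\infty\cap P=V_\infty^P\cap P$ carries a two-dimensional piece of $\partial V_\infty$ onto $\partial H_{\infty q}$. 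Since distinct $q$ give distinct, hence non-coplanar, hemispheres, the assignment $[\infty,q]\mapsto F_q:=\partial H_{\infty q}\cap V_\infty$ is the claimed bijection from edges of $\mathcal{P}$ at $\infty$ to the sides of $V_\infty$; that each $F_q$ is a single connected side will follow from property~(2).

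For the two properties I would pass to the upper half-space model, in which $\partial B_\infty=\{t=1\}$ and projection along geodesics orthogonal to it is the vertical map $\pi\colon(z,t)\mapsto z$. Because every $H_{\infty q}$ contains $B_\infty$, each vertical slice of $V_\infty$ is an upward ray, so $V_\infty=\{(z,t):t\ge h(z)\}$ with $h(z)=\max_{q}\sqrt{1-\|z-q\|^2}$, the maximum over vertices $q$ of $\mathcal{T}$ (terms with $\|z-q\|\ge1$ discarded); this is attained and positive since every point of $\mathbb{C}$ is within distance $<1$ of a vertex of $\mathcal{T}$. Thus $\partial V_\infty$ is the graph of $h$ and $\pi$ restricts to a homeomorphism $\partial V_\infty\to\mathbb{C}$; as $\sqrt{1-\|z-q\|^2}$ strictly decreases in $\|z-q\|$, the side $F_q$ maps onto the locus where the maximum is realized at $q$, which is precisely the Voronoi cell of $q$ among the vertices of $\mathcal{T}$. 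So the images of the sides form the Voronoi tessellation of the vertex set of $\mathcal{T}$---the asserted dual of $\mathcal{T}$---and each $F_q$ is connected because that cell is convex; this proves~(2). For~(1), at a generic point of an edge of $V_\infty$ one lies in the interior of a tile $P$, where $V_\infty$ coincides locally with $V_\infty^P$, which has dihedral angle $2\pi/3$ there by \Cref{crema pronounced "cremma"}; since the dihedral angle between two fixed geodesic planes is constant along their intersection, it is $2\pi/3$ everywhere along the edge.

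It remains to read off the shapes of the sides. The edge $[\infty,q]$ is surrounded cyclically by tiles $P_1,\dots,P_k$ of $\mathcal{P}$, consecutive ones sharing a face through $[\infty,q]$, and $F_q=\bigcup_i\bigl(V_\infty^{P_i}\cap P_i\cap\partial H_{\infty q}\bigr)$; under $\pi$ the $i$-th piece becomes the part of the Voronoi cell of $q$ lying in $P_i\cap\partial B_\infty$---a $60^\circ$ kite (corners $q$, two incident edge-midpoints, and the centroid) when $P_i$ is a tetrahedron, and a $90^\circ$ quarter-square (corners $q$, two incident edge-midpoints, and the square's centre) when $P_i$ is an octahedron. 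The cyclic word of these pieces around $q$ is the cyclic word of tile types around $[\infty,q]$, and since tetrahedra and octahedra contribute angles $\pi/3$ and $\pi/2$ at this edge and these must sum to $2\pi$, only finitely many such words occur; each glues up to one of the polygons in Figure~\ref{Voronoise}, completing the proof. I expect the crux to be exactly this patching step: verifying that a single side $F_q$ really is the union of the per-tile pieces over \emph{all} the tiles around $[\infty,q]$, so that the purely local computations of \Cref{crema pronounced "cremma"} combine consistently---both for the global dihedral-angle claim and for enumerating the side shapes; the other steps are then essentially bookkeeping.
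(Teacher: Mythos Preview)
Your proposal is correct and follows essentially the same strategy as the paper: reduce to single-tile Voronoi cells via \Cref{tuma, rhymes with "Yuma", as in arizona}, read off the local structure (dihedral angles, kite pieces) from \Cref{crema pronounced "cremma"}, and assemble the side $F_q$ from the kites of the tiles around the edge $[\infty,q]$. Your treatment of property~(2) via the explicit function $h(z)=\max_q\sqrt{1-\|z-q\|^2}$ and the graph description of $\partial V_\infty$ is more concrete than the paper's, which simply defers this to \Cref{crema pronounced "cremma"}; otherwise the arguments match.
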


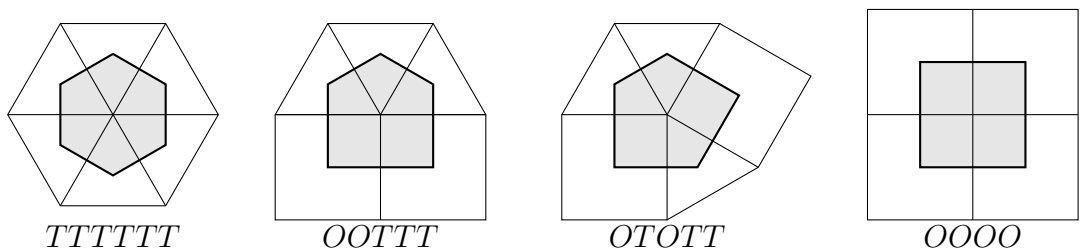
\begin{figure}[ht]
\begin{tikzpicture}

\begin{scope}[xshift=-2.2in, scale=0.7]

\draw (2,0) -- (1,1.732) -- (-1,1.732) -- (-2,0) -- (-1,-1.732) -- (1,-1.732) -- cycle;
\draw (-2,0) -- (2,0);
\draw (-1,1.732) -- (1,-1.732);
\draw (-1,-1.732) -- (1,1.732);

\fill [opacity=0.1] (1,0.577) -- (0,1.1547) -- (-1,0.577) -- (-1,-0.577) -- (0,-1.1547) -- (1,-0.577) -- cycle;
\draw [thick] (1,0.577) -- (0,1.1547) -- (-1,0.577) -- (-1,-0.577) -- (0,-1.1547) -- (1,-0.577) -- cycle;

\node [below] at (0,-1.9) {$TTTTTT$};

\end{scope}

\begin{scope}[xshift=-0.8in, scale=0.7]

\draw (2,0) -- (1,1.732) -- (-1,1.732) -- (-2,0) -- cycle;
\draw (-2,0) -- (2,0);
\draw (0,0) -- (-1,1.732);
\draw (0,0) -- (1,1.732);
\draw (-2,0) -- (-2,-2) -- (2,-2) -- (2,0);
\draw (0,0) -- (0,-2);

\fill [opacity=0.1] (1,-1) -- (1,0.577) -- (0,1.1547) -- (-1,0.577) -- (-1,-1) -- cycle;
\draw [thick] (1,-1) -- (1,0.577) -- (0,1.1547) -- (-1,0.577) -- (-1,-1) -- cycle;

\node [below] at (0,-1.9) {$OOTTT$};

\end{scope}

\begin{scope}[xshift=2.3in, scale=0.7]

\draw (2,2) -- (-2,2) -- (-2,-2) -- (2,-2) -- cycle;
\draw (-2,0) -- (2,0);
\draw (0,-2) -- (0,2);

\fill [opacity=0.1] (1,1) -- (-1,1) -- (-1,-1) -- (1,-1);
\draw [thick] (1,1) -- (-1,1) -- (-1,-1) -- (1,-1) --cycle;

\node [below] at (0,-1.9) {$OOOO$};

\end{scope}

\begin{scope}[xshift=0.7in, scale=0.7]

\draw (1,1.732) -- (-1,1.732) -- (-2,0) -- (-2,-2) -- (0,-2) -- (0,0) -- cycle;
\draw (-2,0) -- (0,0) -- (-1,1.732);
\draw (0,0) -- (1.732,-1) -- (2.732,0.732) -- (1,1.732);
\draw (0,-2) -- (1.732,-1);

\fill [opacity=0.1] (0.577,-1) -- (1.366,0.366) -- (0,1.1547) -- (-1,0.577) -- (-1,-1) -- cycle;
\draw [thick] (0.577,-1) -- (1.366,0.366) -- (0,1.1547) -- (-1,0.577) -- (-1,-1) -- cycle;

\node [below] at (0,-1.9) {$OTOTT$};

\end{scope}

\end{tikzpicture}

\caption{The four types of possible projected Voronoi faces of a peripheral tiling.}
\label{Voronoise}
\end{figure}

\begin{proof} For $\mathcal{P}$ and $\mathcal{H}$ as in the Proposition statement, and any $P\in\mathcal{P}$, let $\mathcal{H}_P$ be the finite set consisting only of those horoballs of $\mathcal{H}$ which are centered at ideal vertices of $P$. By Lemma \ref{tuma, rhymes with "Yuma", as in arizona}, if $P\in\mathcal{P}$ has $\infty$ as an ideal vertex then $V_{\infty}\cap P = V_p\cap P$, where $V_p$ is the Voronoi cell of $B_{\infty}$ relative to the horoball packing $\mathcal{H}_P$ of \Cref{deaf and itchin'}. This implies that $V_{\infty}$ is a polyhedron: for any compact set $K$ intersecting $V_{\infty}$, $K\cap V_{\infty}$ is contained in the union $P_1\cup\hdots\cup P_n$ of finitely many members of $\mathcal{P}$, and for any such $P_j = P$, by the claim there are either three or four geodesic planes $H_{pq_i}$ (notation from the proof of Lemma \ref{crema pronounced "cremma"}, with $p = \infty$) whose union contains $\partial V_{\infty}\cap P$. It therefore follows that $K\cap\partial V_{\infty}$ intersects only the finite collection of ``sides'' (per \cite{Ratcliffe}) that have the form $H_{pq_i}\cap V_{\infty}$, taken over all such $P = P_j$ and their associated $H_{pq_i}$.

For the characterization of the sides, ie.~two-dimensional faces, we begin by recalling that by the definition of $\mathcal{H}$ in Lemma \ref{packing}, for each polyhedron $P$ as above and each edge $e$ of $P$ with an ideal endpoint at $\infty$, there is another horoball $B_q$ of $\mathcal{H}_P$ centered at the other ideal endpoint $q$ of $e$ and a point of tangency $B_{\infty}\cap B_q = \{x\}\subset e$. Because $x$ is a point of tangency, there is an open neighborhood of $x$ in the totally geodesic plane $\partial H_{\infty q}$ (as in Definition \ref{effin diction}) consisting of points for which $B_q$ and $B_{\infty}$ are the only closest members of $\mathcal{H}$; which therefore lie in $F_e:= H_{\infty q}\cap\partial V_{\infty}$. The intersection of $F_e$ with each polyhedron $P'\in\mathcal{P}$ containing $e$ has the form of one of the kites pictured in Figure \ref{single cell Voronoi}, and the union of these kites forms one of the polygons of Figure \ref{Voronoise}. (These reflect all combinatorial possibilities for the arrangement of regular ideal tetrahedra and octahedra containing a single edge.)

The assertion about the dihedral angle of $V_{\infty}$ along any of its edges, and the characterization of $\partial V_{\infty}$ as the Voronoi tessellation of the vertices of $\mathcal{T}$, both now follow directly from Lemma \ref{crema pronounced "cremma"}.
\end{proof}

The observation below concerns the pentagons of Figure \ref{Voronoise}.

\begin{lemma}\label{restrictionz} Let $B$ be a horoball of a packing $\mathcal{H}$ associated to an orientable mixed-platonic orbifold $\mathbb{H}^3/\Gamma$ as in Lemma \ref{packing}, centered at ideal vertices of a $\Gamma$-invariant tiling $\mathcal{P}$, and let $V$ be the Voronoi cell of $B$ relative to $\mathcal{H}$. For any pentagonal faces $F, F'$ of the tiling of $\partial V$ described in Proposition \ref{no props stop bops}, such that $\gamma(F) = F'$ for some $\gamma\in\Gamma$, $F$ and $F'$ have the same type, and $\gamma$ takes the unique edge of $F$ that joins the centers of two octahedra (if $F$ is of type $OOTTT$) or tetrahedra (if type $OTOTT$) to the corresponding edge of $F'$. In particular, the only non-trivial self-isometry of either pentagon type is a reflection fixing the edge specified above.
\end{lemma}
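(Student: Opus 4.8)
The plan is to exploit that an element $\gamma\in\Gamma$ with $\gamma(F)=F'$ is a hyperbolic isometry preserving both the packing $\mathcal{H}$ and the tiling $\mathcal{P}$, and that the combinatorial data attached to a pentagonal Voronoi face by Proposition \ref{no props stop bops}---the cyclic word of tile types around the dual edge of $\mathcal{P}$, hence the pentagon's type and its distinguished edge---can be read off the bare geometric face $F$ in a $\Gamma$-equivariant manner. Write $b$ for the centre of $B$, so that $V$ is the Voronoi cell of $B=B_b$. Proposition \ref{no props stop bops} identifies the $2$-faces of $\partial V$ with the edges of $\mathcal{P}$ having an ideal endpoint at $b$: the face $F_e$ dual to such an edge $e$ is characterised by $F_e\cap e$ being the point of tangency of $B$ with the horoball of $\mathcal{H}$ centred at the other endpoint of $e$, while the boundary edges (respectively, vertices) of $F_e$ are dual to the faces (respectively, tiles) of $\mathcal{P}$ incident to $e$, a given boundary edge of $F_e$ joining the ``centres''---in the sense of Lemma \ref{crema pronounced "cremma"}---of the two tiles flanking the corresponding face of $\mathcal{P}$.

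First I would check that $\gamma(e)=e'$, where $e$ and $e'$ are the edges of $\mathcal{P}$ dual to $F$ and $F'$. Because $\gamma$ takes horoball tangency points to horoball tangency points, permutes the Voronoi cells of $\mathcal{H}$ (Fact \ref{smackt}), and preserves $\mathcal{P}$, it sends $F=F_e$ to the face of $\partial V_{\gamma(b)}$ dual to $\gamma(e)$. Since that face equals $F'$, which is a $2$-face of $\partial V_b$, and a $2$-face of a Voronoi tessellation lies on exactly two cells, comparing the sites of the two cells adjacent to that common face as computed from each description shows that $\gamma(e)$ and $e'$ are geodesics with the same pair of endpoints, so $\gamma(e)=e'$. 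The subtlety here is that $\gamma$ need not fix $b$---so $\gamma(F)$ is a priori a face of a different Voronoi cell, one that merely happens to share that face with $\partial V$---and this step is where one must be careful about it.

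It follows that $\gamma$ restricts to a bijection from the cyclically ordered set of tiles of $\mathcal{P}$ incident to $e$ onto that incident to $e'$, respecting cyclic order up to reversal and sending tetrahedra to tetrahedra and octahedra to octahedra, since those are noncongruent and $\mathcal{P}$ is $\Gamma$-invariant. So the cyclic word of tile types around $e$ equals that around $e'$ up to a symmetry of the $5$-cycle, and $F$, $F'$ have the same type. Moreover a word of type $OOTTT$ contains exactly one adjacent pair of octahedra (and two adjacent pairs of tetrahedra), while a word of type $OTOTT$ contains exactly one adjacent pair of tetrahedra (and none of octahedra); as $\gamma$ preserves these adjacencies, it carries the face of $\mathcal{P}$ realizing the distinguished adjacency of $F$---and hence the boundary edge of $F$ joining the centres of that pair of tiles, which is precisely the edge singled out in the statement---onto the corresponding edge of $F'$.

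For the last sentence I would argue by elementary Euclidean (equivalently, hyperbolic) geometry, using the description in Proposition \ref{no props stop bops} and Figure \ref{Voronoise}: each pentagon is a union of five kites $F_e\cap P'$, one per tile $P'$ incident to $e$, all sharing the common corner $F_e\cap e$ and glued in cyclic order along edges issuing from it. There are only two congruence classes of such kite, one for tetrahedra and one for octahedra (noncongruent, by Lemma \ref{crema pronounced "cremma"}), and each admits a reflection exchanging its two edges at the common corner---induced by the symmetry of $P'$ that fixes $e$ and swaps the two faces of $P'$ along $e$, which preserves $\mathcal{H}_{P'}$. Hence any self-isometry of the pentagon permutes its kites, matching octahedral kites with octahedral and triangular with triangular, so it realizes a ``bracelet'' automorphism of the word $OOTTT$ or $OTOTT$, and conversely each such automorphism is realized; a one-line check shows that in either case this automorphism group has order two, with the nontrivial element the reflection through the midpoint of the unique $O$--$O$ (respectively $T$--$T$) edge. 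I anticipate the only real obstacle is the bookkeeping of the middle two paragraphs---making the passage from a geometric pentagon to its dual edge of $\mathcal{P}$ and then to the cyclic word of tile types manifestly $\Gamma$-equivariant, in particular when $\gamma$ moves $b$; once that is in place, the type match, the distinguished-edge claim, and the order-two symmetry are all routine.
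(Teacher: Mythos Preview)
Your proposal is correct and follows the same idea as the paper's proof: since $\gamma$ preserves the tiling $\mathcal{P}$, it carries the edge of $\mathcal{P}$ dual to $F$ to that dual to $F'$ and respects the cyclic word of tile types, from which the type match, the distinguished-edge claim, and the order-two symmetry group all follow. The paper dispatches this in a few lines ``by inspection of Figure~\ref{Voronoise}'', whereas you supply the details---in particular the careful verification that $\gamma(e)=e'$ even when $\gamma$ does not stabilize $B$---but the underlying argument is the same.
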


\begin{proof} This follows essentially by inspection of Figure \ref{Voronoise}, upon recalling that the lighter-outlined squares and triangles there represent the horospherical cross-sections of tiles of $\mathcal{P}$ by $\partial B$. Every element of $\Gamma$ takes tiles of $\mathcal{P}$ to tiles of $\mathcal{P}$, and if $\gamma$ takes $F$ to $F'$ then it takes the edge of $\mathcal{P}$ corresponding to $F$, with one ideal point at the center of $B$, to the edge of $\mathcal{P}$ corresponding to $F'$.\end{proof}

\subsection{The proof of \Cref{slop}}\label{sloppropproof} We begin with a Lemma that builds on the previous subsection's results.

\begin{lemma}\label{mo restrictionz} Let $B$ be a horoball of a packing $\mathcal{H}$ associated to an orientable mixed-platonic orbifold $\mathbb{H}^3/\Gamma$ as in Lemma \ref{packing}, centered at ideal vertices of a $\Gamma$-invariant tiling $\mathcal{P}$, and let $V$ be the Voronoi cell of $B$ relative to $\mathcal{H}$. For a face $F$ of $V$ and an edge $e$ of $F$ such that an element $\gamma\in\Gamma$ outside the stabilizer $\Lambda$ of $B$ stabilizes $e$ and $F$, let $F_0$ be the face of $V$ that shares the edge $e$, and let $B_0\ne B$ be the horoball that is tangent to $B$ in the interior of $F_0$. Then $\gamma$ stabilizes $B_0$ and its ideal point.\end{lemma}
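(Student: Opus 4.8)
The plan is to use the $\Gamma$-equivariant duality, set up in Proposition \ref{no props stop bops} via Lemma \ref{crema pronounced "cremma"} and Lemma \ref{tuma, rhymes with "Yuma", as in arizona}, between the cell structure of the Voronoi cell $V = V_p$ of $B = B_p$ and the part of $\mathcal{P}$ incident to the ideal point $p$. After conjugating so that $p = \infty$ and $\Gamma$ is in standard position, Proposition \ref{no props stop bops} identifies the two-dimensional faces of $V$ bijectively with the edges of $\mathcal{P}$ having an endpoint at $p$; dually, the edges of a two-dimensional face $F$ correspond to the triangular faces of $\mathcal{P}$ containing the edge dual to $F$, and the vertices of $F$ to the tiles of $\mathcal{P}$ containing that edge. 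Concretely, if $\{p,q\}$ is the edge of $\mathcal{P}$ dual to $F$, then $F = H_{pq}\cap\partial V$ and $B_q$ is the unique horoball of $\mathcal{H}$ tangent to $B$ whose point of tangency with $B$ lies in the relative interior of $F$, namely on the edge $\{p,q\}$ of $\mathcal{P}$. All of this is $\Gamma$-equivariant, because $\Gamma$ preserves $\mathcal{P}$ and $\mathcal{H}$, hence the Voronoi tessellation.

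First I would locate $\gamma(p)$. Since $\gamma$ permutes the Voronoi cells, $\gamma(V_{p'}) = V_{\gamma(p')}$, the face $F = \gamma(F)$ belongs both to $V = V_p$ and to $V_{\gamma(p)}$. But a point in the relative interior of $F$ lies on $\partial H_{pq}$ and strictly inside $H_{pq'}$ for every $q'\notin\{p,q\}$, so it lies in $V_{p'}$ only for $p'\in\{p,q\}$; hence $\gamma(p)\in\{p,q\}$, and $\gamma(p) = q$ because $\gamma\notin\Lambda$ forces $\gamma(p)\neq p$. Equivariance of the duality then forces $\gamma$ to fix the edge $\{p,q\}$ of $\mathcal{P}$ setwise (it is dual to $\gamma(F) = F$), so $\gamma$ interchanges $p$ and $q$.

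Second I would pass to the edge $e$. Let $\sigma$ be the triangular face of $\mathcal{P}$ dual to $e$; it contains the edge $\{p,q\}$, so its ideal vertices are $p$, $q$, and a third point $r$. The two faces of $V$ along $e$ are dual to the two edges of $\sigma$ through $p$, namely $\{p,q\}$ (dual to $F$) and $\{p,r\}$; hence $F_0$ is dual to $\{p,r\}$, and therefore the horoball $B_0\neq B$ tangent to $B$ in the interior of $F_0 = H_{pr}\cap\partial V$ is $B_r$. Now $\gamma(e) = e$ forces $\gamma(\sigma) = \sigma$, so $\gamma$ permutes $\{p,q,r\}$; combined with $\gamma(p) = q$ and $\gamma(q) = p$ this gives $\gamma(r) = r$. (Alternatively: a three-cycle on $\{p,q,r\}$ would move the edge $\{p,q\}$, hence the face $F$, contradicting $\gamma(F) = F$, so $\gamma$ acts on $\sigma$ as the transposition fixing $r$.) Since $\mathcal{H}$ is $\Gamma$-invariant and contains a single horoball centered at each ideal point, $\gamma(B_0) = \gamma(B_r)$ is the horoball of $\mathcal{H}$ at $\gamma(r) = r$, which is $B_0$; thus $\gamma$ stabilizes $B_0$ and fixes its ideal point $r$.

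The only point needing care — the main obstacle, such as it is — is making the duality between $\partial V$ and the part of $\mathcal{P}$ near $p$ precise and genuinely $\Gamma$-equivariant, in particular that a two-dimensional face of $V$ borders exactly one other Voronoi cell and that each edge of $\mathcal{P}$ through $p$ supports at most one face of $V$. Both follow from the local picture of Lemma \ref{crema pronounced "cremma"} together with the structure of $\mathcal{H}$ in Lemma \ref{packing}: every horoball of $\mathcal{H}$ tangent to $B$ meets it at a single point, which lies on an edge of $\mathcal{P}$ with an endpoint at $p$. Everything after that is a one-line combinatorial argument about permutations of the three ideal vertices of $\sigma$.
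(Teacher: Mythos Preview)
Your argument is correct and is essentially the paper's proof recast in the dual language of $\mathcal{P}$: where the paper argues directly that exactly three Voronoi cells $V$, $V'$, $V_0$ meet along $e$ (via the $2\pi/3$ dihedral angle) and that $\gamma$ swaps the first two, you instead identify the triangular face $\sigma$ of $\mathcal{P}$ with ideal vertices $p,q,r$ dual to $e$ and observe that $\gamma$ permutes $\{p,q,r\}$ as the transposition $(p\ q)$. These are the same three-object permutation argument, and your use of the duality lets you skip the paper's preliminary step of showing that $\gamma$ acts on $e$ as a half-turn.
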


\begin{proof} For $B$, $V$, $F$, $e$, and $\gamma$ as in the hypothesis, since $\gamma$ lies outside $\Lambda$ it is non-trivial. Because it stabilizes $F$ and $e$ it must act non-trivially on $e$: if it fixed $e$ then since it preserves the normal direction pointing into $F$ it would also fix $F$, hence since it preserves orientation it would fix all of $\mathbb{H}^3$. Thus it acts on $e$ as reflection through its midpoint. Since it stabilizes $F$ but not $B$ it must exchange $B$ with the horoball $B'$ of $\mathcal{H}$ tangent to $B$ in the interior of $F$, hence acting as a $180$-degree rotation around the perpendicular bisector of $e$ in the hyperplane containing $F$.

By Proposition \ref{no props stop bops}, each Voronoi cell containing $e$ has a dihedral angle of $2\pi/3$ along $e$; therefore there are exactly three such Voronoi cells. Since each of $V$, the Voronoi cell $V'$ of $B'$, and the Voronoi cell $V_0$ of the horoball $B_0$ identified in the Lemma's statement contains $e$, these must be the only three. Since $\gamma$ takes $e$ to itself it preserves the collection of horoballs whose Voronoi cells contain $e$. It exchanges $B$ with $B'$, by the paragraph above, so it must stabilize $B_0$ and therefore also its ideal point.
\end{proof}

The result below takes for granted the fact that a discrete group $\Gamma$ such that $\mathbb{H}^3/\Gamma$ has \emph{one cusp} acts transitively on an associated $\Gamma$-invariant horoball packing.

\begin{corollary}\label{kablooie!}
    Suppose $O = \mathbb{H}^3/\Gamma$ is a one-cusped mixed-platonic orbifold and $B$ is a horoball  of the packing $\mathcal{H}$ associated to $O$ by Lemma \ref{packing}, with Voronoi cell $V$ and stabilizer $\Lambda\in\Gamma$. If there exist a face $F$ of $V$ and edge $e$ of $F$ both stabilized by some $\gamma\in\Gamma - \Lambda$, then $\Lambda$ contains two-torsion. 
\end{corollary}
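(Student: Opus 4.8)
The plan is to deduce this almost immediately from Lemma \ref{mo restrictionz} together with the transitivity of $\Gamma$ on $\mathcal{H}$ that the one-cusped hypothesis provides. First I would feed the given data $F$, $e$, $\gamma$ into Lemma \ref{mo restrictionz}: its hypothesis --- a face $F$ of $V$ and an edge $e$ of $F$ both stabilized by some $\gamma\in\Gamma-\Lambda$ --- is exactly what we are handed, so that lemma applies verbatim and produces a horoball $B_0\neq B$, tangent to $B$ in the interior of the other face $F_0$ of $V$ through $e$, which is also stabilized by $\gamma$. I would additionally recall, from the proof of Lemma \ref{mo restrictionz}, that $\gamma$ acts on $\mathbb{H}^3$ as the rotation by $\pi$ about the geodesic that perpendicularly bisects $e$ inside the plane spanned by $F$; in particular $\gamma$ is a nontrivial involution.

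The second step is to transport $\gamma$ into $\Lambda$. Since $O$ has a single cusp, $\Gamma$ acts transitively on the packing $\mathcal{H}$, so there is $\delta\in\Gamma$ with $\delta(B_0)=B$. Then $\delta\gamma\delta^{-1}$ stabilizes $\delta(B_0)=B$, hence lies in the stabilizer $\Lambda$ of $B$; and since conjugation preserves order and $\gamma\neq 1$, the element $\delta\gamma\delta^{-1}$ is a nontrivial element of $\Lambda$ of order two. Therefore $\Lambda$ contains two-torsion, as claimed.

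I do not expect a serious obstacle here: the geometric content has been front-loaded into Lemma \ref{mo restrictionz} and Proposition \ref{no props stop bops}. The only points requiring care are that $\gamma$ has order exactly two --- immediate from its description as a $\pi$-rotation about a geodesic, so this causes no trouble --- and that conjugating $\mathrm{Stab}(B_0)$ by $\delta$ lands inside $\Lambda=\mathrm{Stab}(B)$ itself rather than merely inside a conjugate of it, which holds precisely because $\delta$ carries $B_0$ to $B$.
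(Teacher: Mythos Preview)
Your proof is correct and follows essentially the same approach as the paper: apply Lemma \ref{mo restrictionz} to produce a horoball $B_0$ stabilized by $\gamma$, then conjugate by an element carrying $B_0$ to $B$ (available by one-cuspedness) to land an order-two element in $\Lambda$. Your explicit observation that $\gamma$ itself has order two, recalled from the proof of Lemma \ref{mo restrictionz}, is a helpful expansion of a step the paper leaves implicit.
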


\begin{proof}
With the given hypothesis, Lemma \ref{mo restrictionz} identifies a horoball $B_0$ of $\mathcal{H}$ such that $\gamma$ stabilizes $B_0$ and its ideal point. Because $O$ has one cusp, $\Gamma$ acts transitively on $\mathcal{H}$, so there exists $\delta\in\Gamma$ taking $B_0$ to $B$. Then $\delta\gamma\delta^{-1}$ is an order-two element of $\Lambda$.
\end{proof}

Recall the statement of \Cref{slop}:

\theoremstyle{plain}
\newtheorem*{slop prop}{\Cref{slop}}
\begin{slop prop}\SlopPropStmt\end{slop prop}

\begin{proof}
Suppose that there is an orientable, one-cusped orbifold $O = \mathbb{H}^3/\Gamma$ such that $\totaleclipse$\ is the  peripheral tiling of $O$ in the sense of Definition \ref{peripheral}. That is, with $\Gamma$ in standard position as in Definition \ref{canon pos}, $\totaleclipse = \mathcal{P}\cap \partial B_{\infty}$, where $\mathcal{P}$ is a $\Gamma$-invariant tiling by regular ideal tetrahedra and octahedra and $B_\infty$ is the horoball centered at $\infty$ of the $\Gamma$-invariant horoball packing $\mathcal{H}$ supplied by \Cref{packing}. Let $V_\infty$ be the Voronoi cell of $B_{\infty}$.  If $\Lambda < \Gamma$ is the stabilizer of $\infty$ then restricting its action to $\partial B_{\infty}$ exhibits $\Lambda$ as a subgroup of $\text{Sym}^{OP}\left(\totaleclipse\right)$.

The Voronoi cells of the horoballs of $\mathcal{H}$ tile $\mathbb{H}^3$ and, since $O$ has a single cusp, $\Gamma$ acts transitively on these cells.  
This implies that the restriction of the universal cover to $V_\infty$ factors through a map of $V_\infty/\Lambda$ ultimately mapping onto $O$.  The interior of $V_\infty$ embeds in $O$ under this map.   These assertions follow from Fact \ref{smackt}.  The boundary of $V_\infty$ is tiled by totally geodesic hexagons and pentagons whose edges have finite length.  $\Lambda$ acts on $\partial V_\infty$ preserving the tiling, taking edges to edges and faces to faces.  If $\gamma \in \Lambda$ and $e$ is an edge of this tiling, there are only two possibilities for the  map $\gamma : \gamma^{-1}(e) \to e$ and they differ by the order-2 reflection of $e$ through its midpoint.  This means that each edge can be broken into two edges by adding midpoints as vertices and $\Lambda$ acts on this graph of half-edges taking edges to edges and vertices to vertices.  The dihedral angle of $\partial V_\infty$ at any given half-edge is $2\pi/3$ and it follows that the total angle in $O$ around the image of a given half-edge $e$ of $\partial V_\infty$ is $m\cdot\frac{2\pi}{3}$, where $m$ is the number of distinct $\Lambda$-orbits of half-edges identified to $e$ in $O$.  Because $O$ is a hyperbolic orbifold, the total angle around any half-edge quotient in $O$ must be an integer submultiple of $2\pi/n$ of $2\pi$, where $n$ is the order of the edge stabilizer in $\Gamma$.  This means that, for any half-edge $e$ of $\partial V_\infty$, the number of distinct $\Lambda$-orbits of half-edges identifed to $e$ in $O$ is either one or three.

Notice that every (full) edge of $\partial V$, for any given Voronoi cell associated to $\mathcal{H}$, either has endpoints contained in a pair of adjacent tetrahedra or in an adjacent pair of an octahedron and a tetrahedron.  Edges of the first type will be called {\em short} and of the second type {\em long}.   Because $\Gamma$ preserves the mixed platonic tiling of $\mathbb{H}^3$, the action of $\Gamma$ on the set of edges of faces of Voronoi cells respects this edge classification.  In particular, any orbit containing a short half-edge contains exclusively short half-edges.

\begin{figure}
\includegraphics[width=4.5 in]{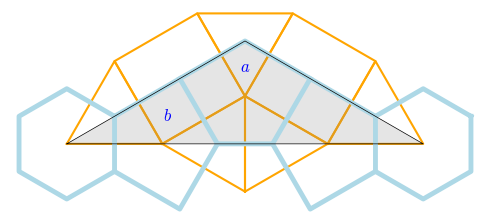}
\caption{A fundamental domain for the $(2,3,6)$-rotation group $\text{Sym}^{OP}\left(\totaleclipse\right)$ of symmetries of $\totaleclipse$. }
\label{fig_domain}
\end{figure}

Suppose that $\Lambda$ is the full OP symmetry group of $\totaleclipse$.  Then $\Lambda$ is a $(2,3,6)$-rotation group and the gray triangle $D$ in Figure \ref{fig_domain} is a fundamental domain for its action on $\partial V_\infty$.   In the figure, the tiles of $\totaleclipse$\ which intersect $D$ are shown in orange and the faces of $\partial V_\infty$ which intersect $D$ are shown in light blue.  Let $\sigma$ be the order-2 elliptic isometry whose axis is vertical and pierces the horizontal edge of $D$.

Inspecting the Figure \ref{fig_domain}, we find that the tiling of $\partial V_\infty$ by pentagons and hexagons has two $\Lambda$-orbits of pentagons, represented by the pentagons labeled $a$ and $b$.  There is a single $\Lambda$-orbit of hexagons.  Name the left most hexagon in Figure \ref{fig_domain} as $h$ and set $t=h \cap D$.  Let $V_h$, $V_a$, and $V_b$ be the Voronoi cells that meet $V_\infty$ along $h$, $a$, and $b$.  Let $e$ be the upper half of the short edge of $b$.  Our goal is to show, for a contradiction, that the number of distinct $\Lambda$-orbits of half-edges identifed to $e$ in $O$ cannot be one or three.
  
 Since $\Gamma$ acts transitively on the tiling by Voronoi cells, there is an element $\gamma_1 \in \Gamma$ which takes $V_\infty$ to $V_b$.  Let $X$ be the pentagon $\gamma_1^{-1}(b)$.  Then $\gamma_1(X)=b$.  Since $X$ belongs to the $\Lambda$-orbit of either $a$ or $b$, by precomposing $\gamma_1$ by an element of $\Lambda$, we may assume $X \in \{ a, b \}$.  Similarly, there is an element $\gamma_2 \in \Gamma$ which takes $V_\infty$ to $V_h$.   As before, we may assume that $\gamma_2^{-1}(t) \subset D$ and, since $\gamma_2$ acts as an OR isometry (using the induced boundary orientation from $V_\infty$), it must be true that $\gamma_2^{-1}(t)$ is the triangle $\sigma(h) \cap D$.
 
 The dihedral angles of $V_\infty$, $V_b$, and $V_h$ at $e$ are all $2\pi/3$, so $V_b$ and $V_h$ intersect at a face $F$ that contains $e$.  Because $\gamma_2^{-1}(V_h)=V_\infty$ and $\gamma_2^{-1}(h)=\sigma(h)$, the pentagon $\sigma(b)$ is $\gamma_2^{-1}(F)$.  Therefore, $V_\infty=\gamma_1^{-1}(V_b)$ has a pentagonal face across $\gamma_1^{-1}(e)$ from $X$ and so $X$ must be $a$.
 
Note that there are a total of 8 $\Lambda$-orbits of long half-edges.  However, upon combining the effects of $\gamma_1$ on the edges of $a$ and $b$ with the action of $\Lambda$, we find that four of these orbits are identified under the action of $\Gamma$.  Since there should only be three, this is a contradiction.

\begin{figure}
\includegraphics[width=4.5 in]{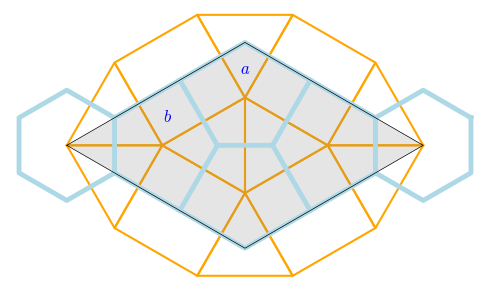}
\caption{A fundamental domain for the $(3,3,3)$-rotation group that is the unique index-two subgroup of $\text{Sym}^{OP}\left(\totaleclipse\right)$.}
\label{fig_domain2}
\end{figure}

To finish, suppose that $\Lambda$ is the index-2 $(3,3,3)$-rotation subgroup of $\text{Sym}^{OP}\left(\totaleclipse\right)$.  The gray diamond $D'$ in Figure \ref{fig_domain2} is a fundamental domain for the action of $\Lambda$ on $\partial V_\infty$.  The tiling of $\partial V_\infty$ by geodesic hexagons and pentagons has four $\Lambda$-orbits of pentagons which are represented by $a$, $b$, and their images under $\sigma$.  We name the leftmost hexagon in Figure \ref{fig_domain2} as $h$, as in the $(2,3,6)$ case, and let $t'$ be the triangle $h\cap D'$.

Naming the Voronoi cells adjacent to $V_{\infty}$ as $V_a$, $V_b$, and $V_h$ as before, and taking $\gamma_2\in\Gamma$ to be an element with $\gamma_2 (V_{\infty}) = V_h$ and $\gamma_2^{-1}(t')\in D'$, in this case (unlike the previous one) it is \textit{a priori} possible that $\gamma_2(t') = t'$. However if this were so then $\gamma$ would stabilize $h$ and the edge $e$ of $h$ contained in $t'$, so $\Lambda$ would contain two-torsion by Corollary \ref{kablooie!}, contradicting that it is a $(3,3,3)$-rotation group. Therefore $\gamma_2^{-1}(t') = \sigma(t')$, analogous to the first case.

Now following the argument from the first case, we find an element $\gamma_1 \in \Gamma$ which takes $V_\infty$ to $V_b$ so that $\gamma_1(X)=b$ for some $X \in \{ a, \sigma(a) \}$.  Applying the same argument to $\sigma(b)$, on the right half of the domain $D'$, we find an element $\gamma_1' \in \Gamma$ which takes $V_\infty$ to $V_{\sigma(b)}$ so that $\gamma_1'(Y)=\sigma(b)$ for some $Y \in \{ a, \sigma(a) \}$.  Now, if $X=Y$, then $\gamma_1'\gamma_1^{-1}(b)=\sigma(b)$ and $\gamma_1'\gamma_1^{-1}(V_b)=V_{\sigma(b)}$ and together these imply that $\gamma_1'\gamma_1^{-1}$ fixes $\infty$.  But then $\gamma_1'\gamma_1^{-1} \in \Lambda$ and identifies the distinct $\Lambda$-orbits represented by $b$ and $\sigma(b)$, a contradiction.  Therefore, $X$ and $Y$ must be distinct in $\{ a, \sigma(a) \}$.

From here, our proof assesses the number of $\Lambda$-orbits of long half-edges which are identified to single half-edges in $O$.  The argument is undisturbed by the horizontal reflection of the diamond $D'$, so we may assume $X=a$ and $Y=\sigma(a)$.

Now again, upon combining the effects of $\gamma_1$ and $\gamma_1'$ on the $\Lambda$-orbits of long half-edges with the action of $\Lambda$, we find that more than three of these orbits are identified to a single edge in $O$ under the action of $\Gamma$.  Since there should only be three, this is a contradiction and our proof is complete.
\end{proof}

\begin{corollary}\label{snore oh larry} There is no mixed-platonic knot whose complement has hidden symmetries and peripheral tiling $\totaleclipse$.
\end{corollary}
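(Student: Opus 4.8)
The plan is to deduce \Cref{snore oh larry} from \Cref{slop} together with the structural results already established and the standard theory of knots with hidden symmetries. Suppose for contradiction that $K\subset S^3$ is a mixed-platonic knot whose complement $M=\Hthree/\Gamma$ has hidden symmetries and peripheral tiling $\totaleclipse$. Then $M$ is one-cusped, and by \Cref{non arithmetic} it is non-arithmetic, so by Margulis's arithmeticity theorem (cf.\ \cite[Thm.~10.3.5]{book}) the orientation-preserving commensurator $\mathrm{Comm}^+(\Gamma)$ is discrete and contains $\Gamma$ with finite index. Set $O=\Hthree/\mathrm{Comm}^+(\Gamma)$, an orientable orbifold finitely covered by $M$, hence one-cusped. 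By \cite[Prop.~9.1]{NeumannReidArith} the hidden symmetries of $M$ force $O$ to be rigid-cusped, and by \cite[Thm.~1.1]{NeilOrbiCusps} its cusp cross-section carries order-three rotations (excluding the $S^2(2,4,4)$ case); thus the peripheral subgroup $\Lambda<\mathrm{Comm}^+(\Gamma)$ stabilizing the relevant parabolic point acts on the horosphere as a cocompact Euclidean rotation group of type $(2,3,6)$ or $(3,3,3)$.

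Next I would pin down the peripheral tiling and peripheral subgroup of $O$. Since $M$ is a one-cusped mixed-platonic manifold, \Cref{commensurator} shows $\mathrm{Comm}(\Gamma)$ preserves the horoball packing $\mathcal{H}$ and the associated tiling $\mathcal{P}$ of \Cref{packing}; hence $O$ is itself mixed-platonic and, by \Cref{on til}, its unique peripheral tiling coincides with that of $M$, namely $\totaleclipse$. In particular $\Lambda\le\text{Sym}^{OP}\!\left(\totaleclipse\right)$, which is a $(2,3,6)$-rotation group whose shortest translation length is $2+\sqrt{3}$ (the value of $1+2/\sqrt3$ for the RISC-center spacing appearing in the proof of \Cref{And then there was shmoo} for $\totaleclipse$, scaled by $\sqrt3$ via \Cref{chow-dah! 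It's chow-dah!}; cf.\ the discussion after \Cref{And then there were two}). On the other hand $\Lambda$ contains the peripheral $\mathbb{Z}^2$ of $\Gamma$, so its shortest translation length is at most that of a meridian of $M$, which equals $2+\sqrt{3}$ by \Cref{lem_one_poss_translation}. Because $\Lambda$ is a $(2,3,6)$- or $(3,3,3)$-rotation group its translation lattice is invariant under an order-three rotation, hence is of the form $\alpha L$ for an Eisenstein integer $\alpha$, where $L$ is the translation lattice of $\text{Sym}^{OP}\!\left(\totaleclipse\right)$; since the shortest vector of $\alpha L$ has length $|\alpha|\cdot(2+\sqrt3)\le 2+\sqrt3$, we get $|\alpha|=1$, so $\Lambda$ contains $L$. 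Among subgroups of the $(2,3,6)$-rotation group $\text{Sym}^{OP}\!\left(\totaleclipse\right)$ that contain its full translation lattice, the only ones that are themselves $(2,3,6)$- or $(3,3,3)$-rotation groups are the whole group and its unique index-two $(3,3,3)$-rotation subgroup, so $\Lambda$ is one of these two.

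Therefore $O$ is an orientable one-cusped mixed-platonic orbifold with peripheral tiling $\totaleclipse$ whose peripheral subgroup is either $\text{Sym}^{OP}\!\left(\totaleclipse\right)$ or its index-two $(3,3,3)$-rotation subgroup --- exactly what \Cref{slop} forbids. This contradiction establishes \Cref{snore oh larry}. The nonroutine ingredients beyond cited facts are the identification of the peripheral tiling of $O$ with that of $M$, which is precisely \Cref{commensurator} together with \Cref{on til} and makes essential use of one-cuspedness, and the translation-length comparison that cuts $\Lambda$ down to the two groups named in \Cref{slop}; I expect no further obstacle, since \Cref{slop} already carries the combinatorial weight of the argument.
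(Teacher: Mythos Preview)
Your proof is correct and follows the same overall arc as the paper: pass to the orientable commensurator quotient $O$, use \Cref{commensurator} to show $O$ is mixed-platonic with peripheral tiling $\totaleclipse$, argue that its peripheral subgroup $\Lambda$ must be $\text{Sym}^{OP}\!\left(\totaleclipse\right)$ or its index-two $(3,3,3)$ subgroup, and invoke \Cref{slop}.

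The one substantive difference is in how you pin down $\Lambda$. The paper argues by hand: taking an order-six (respectively, order-three) rotation $\rho_0\in\Lambda$ and the meridian $\mu$, it explicitly exhibits products like $\rho_0\cdot\mu\rho_0\mu^{-1}$ as order-three rotations at specific centers of $\totaleclipse$, then appeals to co-area to conclude that $\langle\rho_0,\mu\rangle$ already generates the full $(2,3,6)$ group (respectively, the $(3,3,3)$ subgroup). Your argument is more algebraic: the translation lattice of $\Lambda$ is an $\omega$-invariant sublattice of the hexagonal lattice $L$ of $\text{Sym}^{OP}\!\left(\totaleclipse\right)$, hence an ideal $\alpha L$ in the Eisenstein PID; the meridian bound forces $|\alpha|=1$, so $\Lambda\supset L$, and then the classification of subgroups of $\text{Sym}^{OP}\!\left(\totaleclipse\right)/L\cong\mathbb{Z}/6$ with $3$-torsion finishes it. Your route is cleaner and would transplant more readily to other tilings; the paper's has the virtue of being self-contained, not needing the PID fact. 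Either way, the weight rests on \Cref{slop}.
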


\begin{proof} Suppose $M = \mathbb{H}^3/\Gamma_K$ is the complement of a mixed-platonic knot $K$, with peripheral tiling $\totaleclipse$, and let $B$ be a horoball of the associated packing from Definition \ref{peripheral}. Recalling that each equilateral triangle and square edge of $\totaleclipse$\ has length $1$ on $\partial B$, we compute that the shortest distance between distinct vertices of $\totaleclipse$\ that are shared by six triangles is $2+\sqrt{3}$. Since the peripheral subgroup $\Lambda_K$ of $M$ preserves $\totaleclipse$, it takes any such vertex to another. By Proposition \ref{neilolarry}, an element $\mu$ of $\Lambda_K$ representing a meridian of $K$ has translation length exactly $2+\sqrt{3}$ and hence takes any such vertex $v$ to another vertex at minimal distance from it.

If $M$ has hidden symmetries then it covers a rigid cusped orbifold $O = \mathbb{H}^3/\Gamma$, for a discrete group $\Gamma$ containing $\Gamma_K$, with a $(2,3,6)$- or $(3,3,3)$-rigid cusp \cite{NeilOrbiCusps}. Proposition \ref{commensurator} implies that $O$ also has $\totaleclipse$\ as its peripheral tiling. Let $\Lambda$ be the peripheral subgroup of $\Gamma$ containing $\Lambda_K$. If $\Lambda$ is a $(2,3,6)$-rotation group then we claim it must be the full symmetry group $\text{Sym}^{OP}\left(\totaleclipse\right)$ of $\totaleclipse$, contradicting Proposition \ref{no props stop bops}.

To see this, let $\rho_0$ be an order-six element of $\Lambda$, and let $\rho_1 = \mu\rho_0\mu^{-1}$. The fixed points of the $\rho_i$ are each valence-six vertices of $\totaleclipse$, at distance $2+\sqrt{3}$. Hence inspecting $\totaleclipse$\ we find that $\sigma = \rho_0\rho_1$ is an order-three rotation fixing the center of one of the equilateral triangles adjacent to three squares that is closest to the midpoint of the segment joining the fixed point of $\rho_0$ to that of $\rho_1$. Then $\langle\rho_0,\sigma\rangle  = \text{Sym}^{OP}\left(\totaleclipse\right)$, by co-area for example, proving the claim.

We now claim that if $\Lambda$ is a $(3,3,3)$-rotation group then it is the order-two $(3,3,3)$-subgroup of $\text{Sym}^{OP}\left(\totaleclipse\right)$, again contradicting Proposition \ref{no props stop bops}. To see this, let $\rho$ be an order-three rotation of $\Lambda$ with fixed point $c$. The axis of translation of $\mu$ is parallel to one of the three sides of the equilateral triangle formed by the three valence-six vertices of $\totaleclipse$\ closest to $c$. Then either $\mu\rho$ or $\mu\rho^{-1}$ fixes an endpoint of this side, a valence-six vertex $v$, and hence is an order-three rotation around $v$. Calling this element $\sigma$, we now find that $\langle \rho,\sigma\rangle$ is the index-two subgroup of $\text{Sym}^{OP}\left(\totaleclipse\right)$, again by co-area for example. This proves the result.
\end{proof}

The result above is the final ingredient in the proof of this paper's main theorem: 

\begin{theorem}\label{main_thm}
\NoMPKHidSym 
\end{theorem}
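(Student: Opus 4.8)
The plan is to assemble the results developed across Sections~\ref{basic}--\ref{eclipse} into a short argument by contradiction; the proof itself will be almost entirely a matter of citing what has come before. Suppose, for contradiction, that $M = \Hthree/\Gamma_K$ is a mixed-platonic knot complement in $S^3$ with hidden symmetries. The first step is to pass to a rigid-cusped quotient: by \cite[Prop.~9.1]{NeumannReidArith}, $M$ covers a rigid-cusped orbifold, and we may take this to be the orientable commensurator quotient $O = \Hthree/\Gamma$ of $M$, where $\Gamma = \mathrm{Comm}^+(\Gamma_K)$. Since $M$ is one-cusped and the covering $M\to O$ is onto on cusps, $O$ is one-cusped as well. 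By \Cref{commensurator}, $\Gamma$ preserves the $\Gamma_K$-invariant horoball packing $\mathcal{H}$ and tiling $\mathcal{P}$ of $\Hthree$ from \Cref{packing}; hence $O$ is itself mixed-platonic, with the same peripheral tiling $\mathcal{T}$ as $M$. Finally, by \cite[Theorem~1.1]{NeilOrbiCusps} the rigid cusp of $O$ has type $(2,3,6)$ or $(3,3,3)$, so the peripheral subgroup $\Lambda<\Gamma$ preserving $\mathcal{T}$ is a discrete, cocompact group of Euclidean isometries that contains order-three rotations.

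The second step is to cut $\mathcal{T}$ down to a finite list. As observed after \Cref{packing}, $\mathcal{T}$ is a tiling of $\mathbb{R}^2$ by equilateral triangles and squares of side length $1$, and both shapes occur because the polyhedral decomposition of $M$ has $n_1\ge 1$ tetrahedra and $n_2\ge 1$ octahedra (cf.~\Cref{one tile}). A meridian of $K$ lies in $\Lambda$ and, by \Cref{neilolarry}, acts on $\partial B_\infty$ as a translation of length $2+\sqrt{3}$ or $\sqrt{2+\sqrt{3}}$; in either case the minimal translation length among infinite-order elements of $\Lambda$ is at most $2+\sqrt{3}$. Thus the pair $(\Lambda,\mathcal{T})$ satisfies the hypotheses of \Cref{And then there were two}, which forces $\mathcal{T}$ to be one of $\mathcal{Q}$, $\aceofbase$, or $\totaleclipse$ (the tilings generated from the fundamental domains of Figure~\ref{dumb bell} as in Remark~\ref{embark}).

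The third step is to eliminate each of these three in turn. Since $M$ is a one-cusped mixed-platonic \emph{manifold} with peripheral tiling $\mathcal{T}$, \Cref{pom} excludes $\mathcal{T}=\mathcal{Q}$ and \Cref{no ace} excludes $\mathcal{T}=\aceofbase$; and \Cref{snore oh larry}, which relies on \Cref{slop} and the Voronoi-cell analysis of Section~\ref{eclipse}, excludes $\mathcal{T}=\totaleclipse$ for a mixed-platonic knot complement with hidden symmetries. All three cases being impossible, no such $M$ exists, which is the assertion of the theorem.

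This final assembly presents no real difficulty; the substance of the proof lies entirely upstream. I expect the two main obstacles to be \Cref{And then there were two}---whose proof is a careful Euclidean-geometry case analysis of the ``RISC'' neighborhoods around order-three rotation centers---and \Cref{slop}, which eliminates $\totaleclipse$ through a combinatorial study of the Voronoi cells of the associated horoball packing. The number-theoretic input controlling the meridian translation length (\Cref{neilolarry}, sharpened under the hidden-symmetry hypothesis by \Cref{lem_one_poss_translation}) is comparatively short but also essential.
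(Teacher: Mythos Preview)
Your proposal is correct and follows essentially the same approach as the paper's own proof: pass to the orientable commensurator quotient (which is one-cusped, rigid-cusped, and mixed-platonic with the same peripheral tiling as $M$), invoke \Cref{neilolarry} and \Cref{And then there were two} to reduce to the three candidate tilings, and then eliminate them via \Cref{pom}, \Cref{no ace}, and \Cref{snore oh larry}. Your write-up is in fact slightly more careful than the paper's in making explicit that $M$ and $O$ share the same peripheral tiling and that both tile shapes must occur (via \Cref{one tile}).
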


\begin{proof} Suppose $M = \mathbb{H}^3/\Gamma$ is a mixed-platonic knot complement with hidden symmetries. Since it is mixed-platonic it is non-arithmetic, by \Cref{non arithmetic}, so its commensurator $\mathrm{Comm}(\Gamma)$ is discrete. By \Cref{packing}, the $\Gamma$-invariant tiling $\mathcal{P}$ by regular ideal tetrahedra and octahedra is also $\mathrm{Comm}(\Gamma)$-invariant. Let $\mathrm{Comm}^+(\Gamma)$ be the orientation-preserving subgroup of $\mathrm{Comm}(\Gamma)$. The orientable commensurator quotient $O := \mathbb{H}^3/\mathrm{Comm}^+(\Gamma)$ is then itself mixed-platonic. 

Since $M$ has hidden symmetries, $O$ has a rigid cusp by \cite[Prop.~9.1]{NeumannReidArith}. It then follows from Theorem 1.1 of \cite{NeilOrbiCusps} and the classification of Euclidean orbifolds that $O$ has either a $(2,3,6)$- or $(3,3,3)$-cusp. With $O$ represented in standard position as in \Cref{canon pos}, let $\mathcal{T}$ be the peripheral tiling from \Cref{peripheral}, and let $\Lambda$ be the peripheral subgroup of $\mathrm{Comm}^+(\Gamma)$ stabilizing the horoball $B_{\infty}$ centered at $\infty$ of the horoball packing from \Cref{packing}. Then $\Lambda$ has order-three rotations and leaves $\mathcal{T}$ invariant. 

Let $\Lambda_0<\Lambda$ be the peripheral subgroup of $\Gamma$ stabilizing $B_{\infty}$. By \Cref{neilolarry}, the element of $\Lambda_0$ representing a meridian acts on $\partial B_{\infty}$ as a translation by at most $2+\sqrt{3}$; hence this is an upper bound for the minimum translation length of infinite-order elements of $\Lambda$. \Cref{And then there were two} therefore implies that $\mathcal{T}$ is one of the three tilings generated from those in Figure \ref{dumb bell} as in Remark \ref{embark}. However none of these can be the peripheral tiling of $M$: by \Cref{pom}, \Cref{no ace}, and \Cref{snore oh larry}, respectively, moving from left to right across the Figure. This contradiction finishes the proof.
\end{proof}

\section{Questions}\label{sec:questions}

The full class of mixed-platonic manifolds remains generally mysterious beyond the known examples described in \Cref{tinderbox}. For instance the following is open to our knowledge:

\begin{question} Are there mixed-platonic knots other than $12n706$?  Are there infinitely many?  
\end{question}

By \Cref{non arithmetic}, every mixed-platonic knot complement has the quartic invariant trace field $\mathbb{Q}(i,\sqrt{-3})$. While the hybridization construction discussed in \Cref{ex:mph} produces infinite families of link complements with this invariant trace field, and one can also produce infinite families of \emph{two component} link complements having the same quartic invariant trace fields (see \cite{EricJasonAlgebraicInvariants}), it seems that there are no known infinite families of knot complements with the same trace field. Indeed, while small finite families of knot complements with the same quartic trace fields have been observed computationally (for example the complements of $6a3$, $7a1$, and $9a7$ in the HTLinkExteriors census), these computations could best be described as observations and do not seem to lead to any kind of constructive method.

In another algebraic direction, given that by \Cref{neilolarry}, mixed-platonic knot complements have representation in the ring of integers of the cyclotomic field $\mathbb{Z}[\zeta_{12}]$, we ask:

\begin{question} Are there knot complements other than those of the figure-eight and $12n706$ with representations in the ring of integers of a cyclotomic field? 
\end{question} 

One way to shed light on the questions above would be to answer the one below, which can likely be attacked using current computational methods.

\begin{question} What is the classification of one-cusped mixed-platonic manifolds of low volume/complexity? What is their \mbox{\rm commensurability} classification?
\end{question}

Regarding the commensurability classification, we note that any two platonic manifolds that decompose into copies of the same platonic solid---eg.~any two tetrahedral manifolds---are commensurable, but this is very much not the case for mixed-platonic manifolds. For instance, the Boyd knot complement of \Cref{ex: Boyd knot} is not commensurable with $s913$ from \Cref{s913}: the former has a mixed-platonic commensurator quotient, by \Cref{commensurator}, whereas the latter does not (cf.~\Cref{v2774}).

Further motivated by \Cref{s913} and \Cref{commensurator}, we ask:

\begin{question}
Does every mixed-platonic manifold $M = \mathbb{H}^3/\Gamma$ have a \mbox{\rm unique} decomposition into regular ideal tetrahedra and octahedra, up to self-isometry?
\end{question}

The results of Sections \ref{perti} and \ref{eclipse} raise the following question:

\begin{question}
For $\Lambda$ equal to each of the $(2,3,6)$-, $(2,4,4)$-, and $(3,3,3)$-rotation groups, does there exist a one-cusped mixed-platonic orbifold with peripheral subgroup $\Lambda$? If so, what is the smallest-volume such example?
\end{question}

Since the figure-8 knot complement and the dodecahedral knot complements are the only knot complements known to admit hidden symmetries and the only knot complements known to contain immersed closed totally geodesic surfaces, we can also ask: 

\begin{question} \label{ques:hiddentg} Does every knot complement which admits hidden symmetries contain a closed, possibly immersed, totally geodesic surface? 
\end{question} 


\end{document}